\theoremstyle{plain}
\newtheorem{theorem}{Theorem}[section]
\newtheorem{lemma}[theorem]{Lemma}
\newtheorem{corollary}[theorem]{Corollary}
\newtheorem{proposition}[theorem]{Proposition}
\theoremstyle{definition}
\newtheorem{definition}[theorem]{Definition}
\newtheorem{example}[theorem]{Example}
\newtheorem{remark}[theorem]{Remark}
\title[HL-homotopy of handlebody-links and Milnor's invariants]{HL-homotopy of handlebody-links and Milnor's invariants}
\author[Kotorii]{Yuka Kotorii}
\address[Kotorii]{Graduate School of Mathematical Science, The University of Tokyo, 3-8-1 Komaba, Meguro-ku, Tokyo 153-8914, Japan}
\author[Mizusawa]{Atsuhiko Mizusawa}
\address[Mizusawa]{Department of Mathematics, Fundamental Science and Engineering, Waseda University, 3-4-1Okubo, Shinjuku-ku, Tokyo 169-8555, Japan} 
\email[Yuka Kotorii]{kotorii@ms.u-tokyo.ac.jp}
\email[Atsuhiko Mizusawa]{a\_mizusawa@aoni.waseda.jp}
\keywords{Handlebody-link, Milnor's $\overline{\mu}$-invariant, clasper, hypermatrix, tensor}
\date{\today}
\begin{document}

\begin{abstract}
A handlebody-link is a disjoint union of embeddings of handlebodies in $S^3$ and an HL-homotopy is an equivalence relation on handlebody-links generated by self-crossing changes.
The second author and Ryo Nikkuni classified the set of HL-homotopy classes of 2-component handlebody-links completely using the linking numbers for handlebody-links.
In this paper, we construct a family of invariants for HL-homotopy classes of general handlebody-links, by using Milnor's $\overline{\mu}$-invariants. Moreover, we give a bijection between the set of HL-homotopy classes of almost trivial handlebody-links and tensor product space modulo some general linear actions, especially for 3- or more component handlebody-links. 
Through this bijection we construct comparable invariants of HL-homotopy classes.
\end{abstract}

\subjclass[2010]{57M27, 57M25}

\maketitle

\section{introduction} \label{sec1}
\par 
A \textit{link} is an embedding of some circles into the 3-sphere $S^3$. A \textit{spatial graph} is a topological embedding of a graph in $S^3$. If all components of a spatial graph are homeomorphic to circles, the spatial graph is regarded as a link. Two spatial graphs are \textit{equivalent} if there is an ambient isotopy which transform one to the other. 
\par
A \textit{handlebody-link} \cite{Ishi, Su} is a disjoint union of embeddings of handlebodies in  $S^3$ (Figure \ref{fig1}). 
\begin{figure}[ht] 
$$
\raisebox{7 pt}{\includegraphics[bb= 0 0 169 93, height=35 pt]{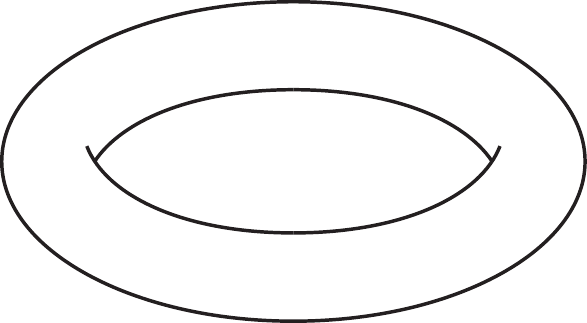}} 
\hspace{0.4cm}
\raisebox{7 pt}{\includegraphics[bb= 0 0 328 93, height=35 pt]{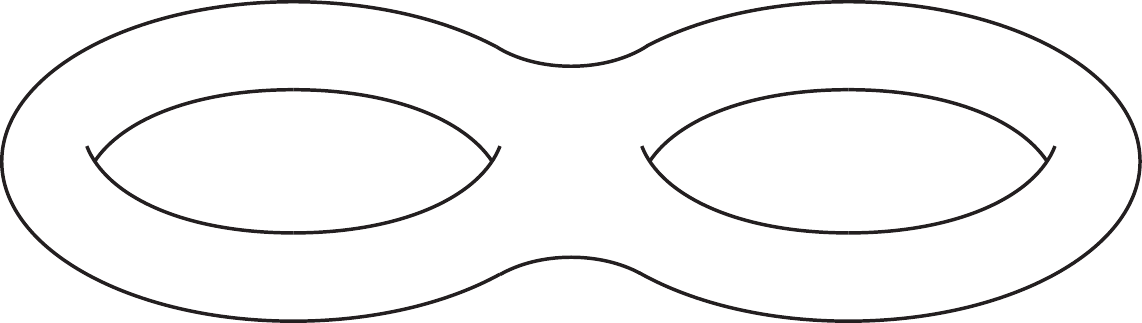}} 
\hspace{-6.7cm}\raisebox{-35 pt}{\includegraphics[bb= 0 0 488 93, height=35 pt]{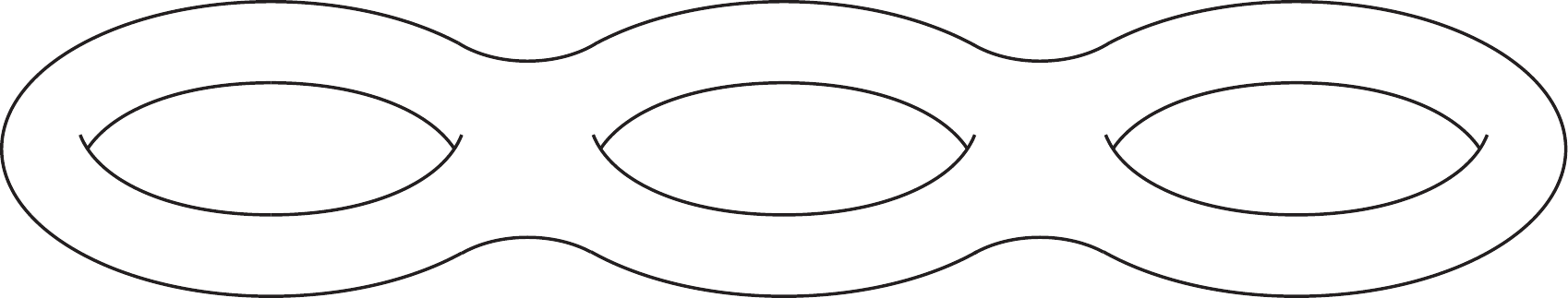}} 
\hspace{0.4cm}\underset{\mbox{\footnotesize }}{ \mbox{ \LARGE{$\hookrightarrow$}}} \hspace{0.2cm}
\raisebox{47 pt}{\begin{overpic}[bb=0 0 54 54, height=20 pt]{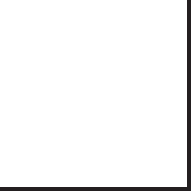}\put(1,6){$S^3$}\end{overpic}} \hspace{0.2cm} 
\raisebox{-47 pt}{\includegraphics[bb= 0 0 225 154, height=110 pt]{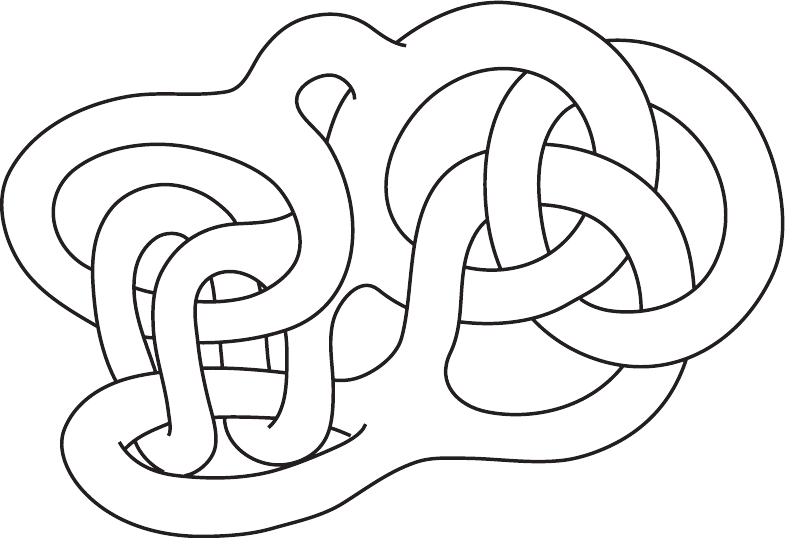}}
$$
\caption{Handlebody-link.} \label{fig1}
\end{figure}
Two handlebody-links are \textit{equivalent} if there is an ambient isotopy which transforms one to the other. A handlebody-link can be represented by a spatial graph. A spatial graph $G$ is said to \textit{represent} a handlebody-link $H$ if the regular neighborhood of $G$ is ambient isotopic to $H$. There are infinitely many spatial graphs which represent the same handlebody-link. It is known that two spatial graphs which represent the same handlebody-link are transformed to each other by a sequence of contraction moves in Figure \ref{cons}, which is to contract an edge connecting two different vertices and its inverse (See \cite{Ishi} for details). A handlebody-link is \textit{trivial} if it is represented by a plane graph.
\begin{figure}[ht]
$$
\raisebox{-26 pt}{\begin{overpic}[bb=0 0 499 316, height=63pt]{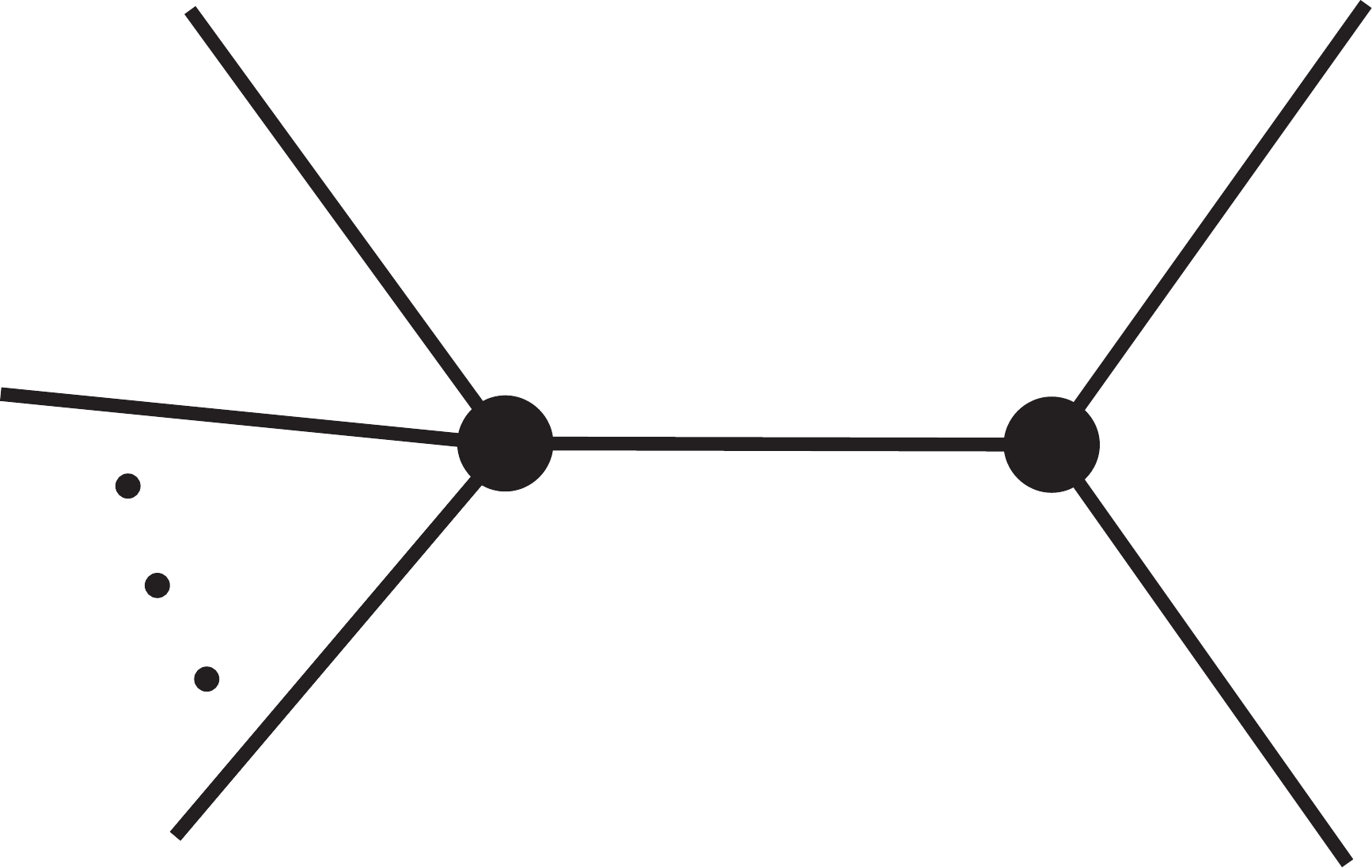}
\end{overpic}}
\hspace{0.4 cm}\mbox{\LARGE{$\leftrightarrow$}}\hspace{0.5 cm}
\raisebox{-26 pt}{\includegraphics[bb=0 0 300 316, height=63 pt]{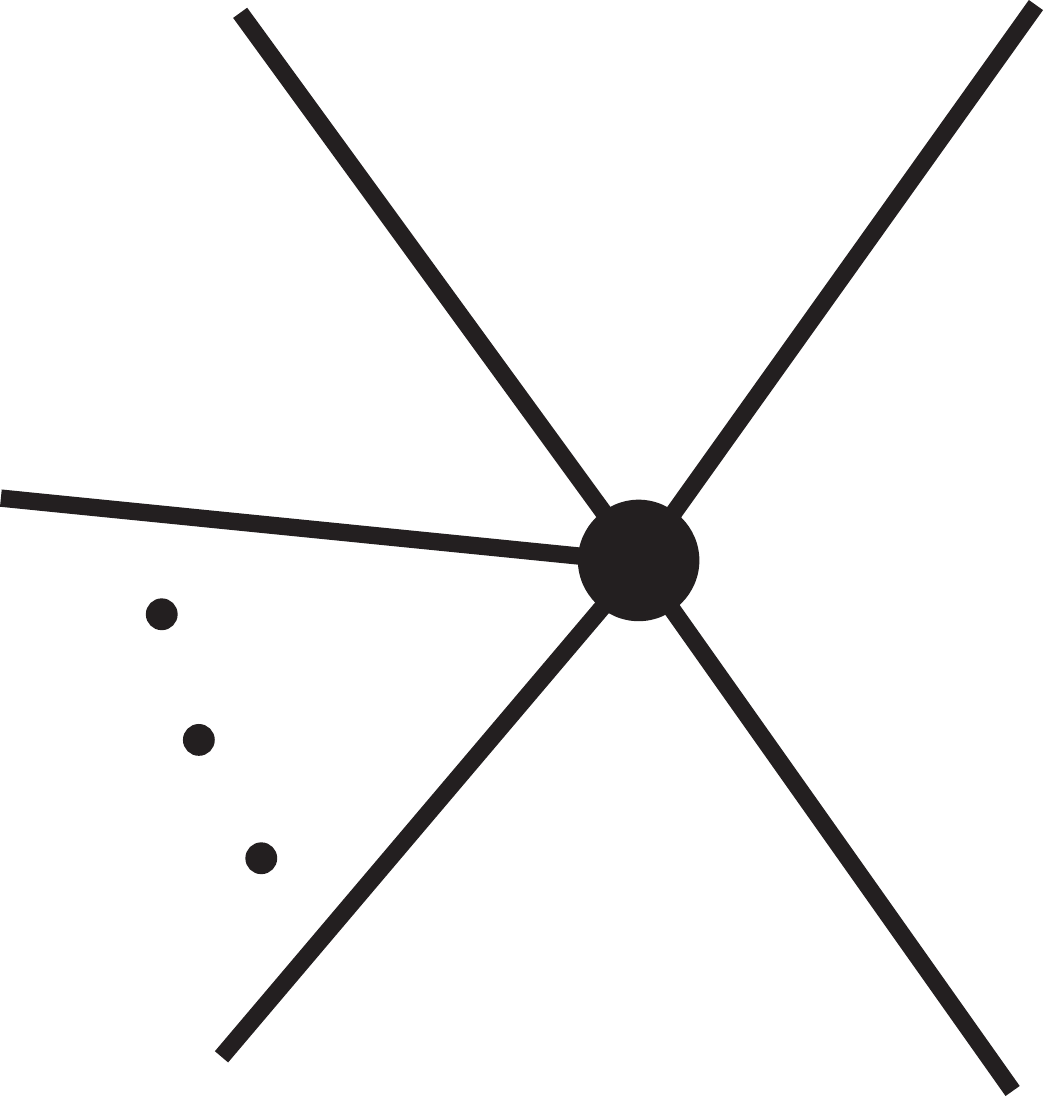}}$$
\caption{Contraction move.} \label{cons}
\end{figure}
\par
A \textit{self-crossing change} of a link (resp. a spatial graph) is a crossing change of two arcs which belong to the same component of a link (resp. a spatial graph). J.~Milnor defined a class of links called a \textit{link-homotopy} \cite{Mil01}. Two links are \textit{link-homotopic} if they are transformed to each other by self-crossing changes and ambient isotopies. The notion of link-homotopy was generalized for spatial graphs. An \textit{edge-homotopy} (resp. a \textit{vertex homotopy}, a \textit{component homotopy}) is an equivalence relation generated by ambient isotopies and crossing changes of two arcs which  belong to the same edge (resp. adjacent edges, the same component). We generalize the notion of link-homotopy to handleboody-links. \begin{definition}[HL-homotopy] 
Let $H_0$ be $n$ handlebodies and $H_i$ $(i=1, 2)$ two $n$-component handlebody-links obtained by embeddings $f_i$'s of $H_0$ to $S^3$. Two handlebody-links $H_1$ and $H_2$ are called \textit{HL-homotopic} if there is homotopy $h_t$ from $f_1$ to $f_2$ where the components of $h_t(H_0)$ are mutually disjoint at any $0\leq t \leq 1$. 
\end{definition}
\begin{remark}
In \cite{MN}, the notation of \textit{neighborhood homotopy} of spatial graphs was introduced. Two spatial graphs $G$ and $G'$ are neighborhood homotopic if they are transformed to each other by a sequence of ambient isotopies, contraction moves and self-crossing changes. 
Let $H_1$ and $H_2$ be two handlebody-links and let $G_1$ and $G_2$ be spatial graph presentations of $H_1$ and $H_2$ respectively. The handlebody-links $H_1$ and $H_2$ are HL-homotopic if and only if $G_1$ and $G_2$ are neighborhood homotopic. 
\end{remark}
\par
J.~Milnor classified the link-homotopy classes of 2-component links by linking numbers \cite{Mil01}. Moreover, 
he defined a family of invariants for an ordered oriented link in $S^3$ as a generalization of the linking numbers, in \cite{Mil01, Mil02}. These invariants are called {\em Milnor's $\overline{\mu}$-invariants}. For an ordered oriented $n$-component link $L$, Milnor's $\overline{\mu}$-invariant is specified by a sequence $I$ of indices in $\{1,2,\ldots ,n\}$ and denoted by $\overline{\mu}_L(I)$.
If the sequence is with distinct indices, then this invariant is also link-homotopy invariant and called {\em Milnor's link-homotopy invariant}. He also classified the link-homotopy classes of 3-component links by the link-homotopy invariants $\overline{\mu}_L(I)$ with $|I|=3$, where $|I|$ is the length of $I$. The link-homotopy classes of 4-component links were classified by \cite{L}. In general, there is an algorithm which determines whether 
two link are link-homotopic or not \cite{HL}.
\par 
For handlebody-links, by generalizing invariants of links or spatial graphs, some invariants are defined; for example, quandle invariants \cite{IshiIwa}. The second author defined \textit{linking numbers} for 2-component handlebody-links by generalizing the linking number of links.
\begin{definition}[Linking numbers \cite{Miz}] 
Let $L_1 \cup L_2$ be a 2-component handlebody-link and $\{e^i_1, \dots, e^i_{g_i}\}$ a basis of the first homology group $H_1(L_i;\mathbb{Z})$ of $L_i$ where $g_i$ is the genus of $L_i$. An element $e^i_j$ is considered as an oriented closed circle in $S^3$. Let $M$ be a matrix whose $(j,k)$-entry is the linking number $lk(e^1_j, e^2_k)$ and $d_1|d_2|\dots|d_l$ the elementary divisors of $M$. Then \textit{linking numbers} $Lk(L_1,L_2)$ of $L_1\cup L_2$ is defined as the multiset of absolute values of the elementary divisors $\{|d_1|
, |d_2|, \dots, |d_l|\}$.
\end{definition}
\par
In \cite{MN}, the HL-homotopy classes of 2-component handlebody-links are classified completely by the linking numbers for handlebody-links. 
\begin{theorem}[\cite{MN}] 
 Let $L_1\cup L_2$ and $L'_1\cup L'_2$ be 2-component handlebody-links such that the genera of $L_i$ and $L'_i$ are equal for each $i$. Then $L_1\cup L_2$ and $L'_1\cup L'_2$ are HL-homotopic if and only if $Lk(L_1, L_2)=Lk(L'_1, L'_2)$.
\end{theorem}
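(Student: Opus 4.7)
The plan is to prove the equivalence by establishing invariance and realization separately, using Milnor's classical link-homotopy classification as the engine for the two-handlebody case. For the \emph{invariance} direction, I would observe that each entry $lk(e^1_j, e^2_k)$ of the linking matrix $M$ depends only on the image of $e^1_j$ under the inclusion-induced map $H_1(L_1;\mathbb{Z}) \to H_1(S^3\setminus L_2;\mathbb{Z})$, and symmetrically for $e^2_k$. A self-crossing change of the spatial graph representing $L_1$ can be viewed as a homotopy of the embedding $L_1\hookrightarrow S^3\setminus L_2$, which preserves this induced map and therefore every entry of $M$; similarly for self-crossing changes of $L_2$. A change of spatial graph representative by contraction moves, or equivalently a change of homology basis, acts on $M$ as $M\mapsto PMQ$ with $P\in GL_{g_1}(\mathbb{Z})$ and $Q\in GL_{g_2}(\mathbb{Z})$, which preserves the multiset of absolute values of elementary divisors.

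For the \emph{realization} direction, I would represent each $L_i$ by a wedge of $g_i$ circles based at a single vertex $v_i$, so that the loops $\{e^i_1,\ldots,e^i_{g_i}\}$ provide a preferred basis of $H_1(L_i;\mathbb{Z})$ and display $M$ directly in the diagram. Applying Milnor's classification of two-component links pair by pair to the sublinks $e^1_j\cup e^2_k$ then reduces each such pair, up to link-homotopy, to the $lk(e^1_j,e^2_k)$-twisted Hopf link. Since crossings between distinct loops of a single $G_i$ are self-crossings of the whole spatial graph, hence freely changeable under HL-homotopy, these pairwise simplifications can be assembled into a single HL-homotopy to a standard form whose essential crossings realize $M$ in a block pattern. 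To match the Smith normal form, elementary row and column operations on $M$ are then realized by contraction moves---for instance, replacing $e^1_j$ by $e^1_j\pm e^1_k$ corresponds to coalescing two loops at $v_1$ and redistributing them---giving the full $GL_{g_1}(\mathbb{Z})\times GL_{g_2}(\mathbb{Z})$-action needed to bring $M$ to $\mathrm{diag}(d_1,\ldots,d_l,0,\ldots,0)$, a representative depending only on the elementary divisors.

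The main obstacle I expect is the \emph{simultaneity} step: showing that the $g_1 g_2$ pairwise link-homotopies can be performed inside a single HL-homotopy of $L_1\sqcup L_2$ without introducing self-knottings of individual loops or linkages among loops of the same component that are not killed by the self-crossing freedom. Making this rigorous seems to require putting the spatial graph into a clasper normal form---consistent with the paper's use of clasper techniques---and then applying clasper calculus to trade away higher-order obstructions in favor of self-crossings, which are trivial in the HL-homotopy category. Once this normal form is achieved, matching the Smith normal forms on both sides yields an explicit HL-homotopy between the two handlebody-links.
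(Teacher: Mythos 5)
Note first that the paper itself does not prove this theorem: it is quoted from \cite{MN}, and the only machinery in the paper that bears on it is the clasper argument of Section \ref{sec6}, of which your plan is essentially the $n=2$ specialization. Your invariance direction is fine (linking numbers of disjoint curves are preserved by disjointness-preserving homotopies, and a change of basis acts by $M\mapsto PMQ$ with $P,Q$ unimodular, so the Smith normal form is preserved). The ``simultaneity'' obstacle you flag in the realization direction is exactly what the paper's lemmas supply: write a bouquet presentation $\Gamma$ as a planar $\Gamma_0$ surgered along $C_1$-trees, discard the trees with both leaves on one component (these are self-crossing changes), and organize the remaining ones into parallel chords joining $f^1_j$ to $f^2_k$ with multiplicity $lk(e^1_j,e^2_k)$; the higher-order claspers created by these rearrangements all die up to HL-homotopy because, for a $2$-component graph, any tree of degree $\geq 2$ has at least two leaves on the same component (Lemma \ref{multitree}), which is a cleaner and fully rigorous form of your ``trade higher-order obstructions for self-crossings.'' This is precisely the canonical form used in the proof of Theorem \ref{thm2} (via Lemma \ref{HBLtri}), after which the $GL_{g_1}(\mathbb{Z})\times GL_{g_2}(\mathbb{Z})$ action bringing $M$ to Smith normal form is realized diagrammatically. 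One correction of detail: in a bouquet presentation the base change $e^1_j\mapsto e^1_j\pm e^1_k$ is realized by edge slides (Theorem \ref{EST}, Proposition \ref{HandB}), not by contraction moves --- a bouquet graph has no edge joining distinct vertices to contract --- and the sign and half-twist bookkeeping in this realization (the analogue of Figure \ref{ReTraCla}) is where the actual work lies; with that adjustment your outline is sound, and it also differs from the route taken in \cite{MN} itself, which does not pass through claspers.
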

\par
In this paper, handlebody-links whose components are more than two are considered. Using Milnor's link-homotopy invariants, we make a map from the set of handlebody-links to a tensor product space of $\mathbb{Z}$-modules up to some actions. By using this, we give a necessary and sufficient condition of that a handlebody-link is HL-homotopic to the trivial one. When handlebody-links are almost trivial, the map give a bijection between HL-homotopy classes of handlebody-links and some tensor product space up to some actions. Through the map we construct comparable invariants of HL-homotopy classes. 
\par
This paper is organized as follows. In section 2, we introduce a bouquet graph presentation of handlebody-links. Section 3 and 4 review Milnor's $\overline{\mu}$-invariants and hypermatrices respectively. 
In section 5, we state the main theorem. We construct a family of HL-homotopy invariants for general handlebody-links by using Milnor's link-homotopy invariant. We then construct a bijection between the HL-homotopy classes of almost trivial handlebody-links and tensor product spaces up to some modulo. Moreover, we define some comparable invariants for HL-homotopy classes of almost trivial handlebody-links. 
Section 6 shows the proof of the main theorem by using the clusper theory.
In Appendix A, we review a proof of Theorem \ref{EST} in Section 2.

\section*{Acknowledgements} 
The authors thank Professor Kouki Taniyama for valuable comments and suggestions.
They also thanks Professor Ryo Nikkuni for useful discussions and comments. 
They also thanks Professor Akira Yasuhara for useful comments. 
They also thanks Professor Jun Murakami for valuable advices.
They also thanks Professor Atsushi Ishii for some advices.

\section{Bouquet graph presentation of handlebody-links} \label{sec2}
\par
 A bouquet graph is a graph which has only one vertex. We can represent each component of a handlebody-link by an embedding of a bouquet graph. We call the presentation a \textit{bouquet graph presentation}. A bouquet graph presentation of a handlebody-link is obtained from a spatial graph presentation $G$ of the handlebody-link by contracting a spanning tree of each component of $G$ to a point. The following fact is known.
\begin{theorem}[{\cite{MS}}] \label{EST}
Two bouquet graph presentations which represent the same handlebody-link are transformed to each other by a sequence of edge-slides in Figure \ref{ES}, which slides an end point of an loop edge along another edge from the vertex to the vertex.
\end{theorem}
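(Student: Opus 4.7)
The plan is to leverage Ishii's theorem \cite{Ishi}, which asserts that any two spatial graphs representing the same handlebody-link are related by a finite sequence of contraction moves (contracting a non-loop edge between two distinct vertices) and ambient isotopies. Applied to the two bouquet presentations $B_1$ and $B_2$, this produces a sequence of spatial graphs $G_0 = B_1, G_1, \ldots, G_N = B_2$ whose intermediate stages typically have multiple vertices per component. The task is to rewrite this as a sequence of moves that stays among bouquet presentations, using only edge-slides.

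First, for each intermediate graph $G_i$ I would choose a spanning tree $T_i$ in each component and let $B(G_i, T_i)$ denote the bouquet obtained by contracting $T_i$ to a point inside $S^3$. This bouquet represents the same handlebody-link as $G_i$. At the endpoints take $T_0$ and $T_N$ trivial so that $B(G_0, T_0) = B_1$ and $B(G_N, T_N) = B_2$. The strategy is then to show that consecutive bouquetifications $B(G_i, T_i)$ and $B(G_{i+1}, T_{i+1})$ are related by edge-slides, and concatenate.

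This reduces the proof to two sublemmas. The \emph{compatibility lemma}: if $G_{i+1}$ is obtained from $G_i$ by contracting a non-loop edge $e$ and one chooses $T_i$ with $e \in T_i$ and sets $T_{i+1} = T_i / e$, then $B(G_i, T_i) = B(G_{i+1}, T_{i+1})$ as spatial bouquets, because contracting $T_i$ inside $G_i$ may be performed by first contracting $e$ and then the rest; the expansion case is symmetric. The \emph{spanning-tree change lemma}: for a fixed spatial graph $G$ and any two spanning trees $T, T'$ of $G$, the bouquets $B(G, T)$ and $B(G, T')$ are related by a sequence of edge-slides. The second lemma is needed because at each $G_i$ the compatibility lemma forces particular choices of $T_i$ from the moves on either side, and these choices generally conflict; the spanning-tree change lemma reconciles them.

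I expect the spanning-tree change lemma to be the main obstacle. Combinatorially, any two spanning trees of a finite graph are joined by a sequence of single-edge exchanges (remove $e \in T$, add $f \notin T$ where $f$ reconnects the two components of $T \setminus e$). In the bouquet, such an exchange replaces the loop corresponding to the non-tree edge $f$ by one homotopic to $f$ concatenated with the $T$-path between its endpoints, and promotes $e$ to a new non-tree loop. Realizing this by edge-slides amounts to successively sliding the endpoint of the $f$-loop along each edge of the relevant $T$-path; the delicate point is to verify locally, in a regular neighborhood of each tree edge being traversed, that the individual slide preserves the ambient isotopy class of the bouquet and is covered by the move in Figure \ref{ES}. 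Once this local verification is in place, combining the two lemmas yields the desired chain $B_1 \to \cdots \to B_2$ consisting entirely of edge-slides.
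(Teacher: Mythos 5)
Your overall strategy (Ishii's contraction-move theorem plus bouquetification via spanning trees) is genuinely different from the paper's proof, which dualizes: the paper passes to the complete meridian-disk systems dual to the two bouquet presentations, invokes the lemma of Suzuki and Johannson \cite{Su3, Joh} that any two complete meridian-disk systems of a handlebody-knot are related by disk-sliding moves, and observes that the dual of a disk slide is exactly the edge-slide of Figure \ref{ES}. The reduction part of your plan is sound: the compatibility lemma works (choose $T_i \ni e$ and $T_{i+1}=T_i/e$; contracting in either order gives ambient isotopic bouquets), and it correctly reduces everything to your spanning-tree change lemma.

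That lemma, however, is where all the geometric content of the theorem lives, and your proposal does not prove it. Two concrete problems. First, the sketch misstates what a tree-edge exchange does at the bouquet level: after contracting $T$ there are no tree edges left, so ``sliding the endpoint of the $f$-loop along each edge of the $T$-path'' is not a move available in $B(G,T)$; an edge-slide slides a loop endpoint over another \emph{loop}. What actually happens when $T'=T-e+f$ is that the loop of every non-tree edge $g$ whose $T$-path joining its endpoints uses $e$ must be slid over the loop of $f$, after which the $f$-loop plays the role of the new $e$-loop (up to orientation, which is immaterial for unoriented bouquets). Second, even with the corrected combinatorics, you still owe the geometric assertion that performing these slides on the embedded bouquet $B(G,T)$ produces a spatial graph ambient isotopic to $B(G,T')$; you flag this as ``the delicate point'' and leave it unverified. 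That verification is precisely the statement that changing the contracted spanning tree is realized by slides of the $1$-handles of the regular neighborhood, i.e.\ the same disk-slide/handle-slide fact that the paper's proof imports from \cite{Su3, Joh}. As written, your argument therefore reduces the theorem to an unproven statement of essentially the same depth; to finish you would either have to carry out an explicit local handle-slide argument inside $N(T)$, or cite the meridian-disk-system lemma, at which point the paper's shorter dual argument already suffices.
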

\begin{proof} See Appendix \ref{app1}.
\end{proof}
\begin{figure}[ht]
$$
\raisebox{-30 pt}{\includegraphics[bb= 0 0 196 267, width=50 pt]{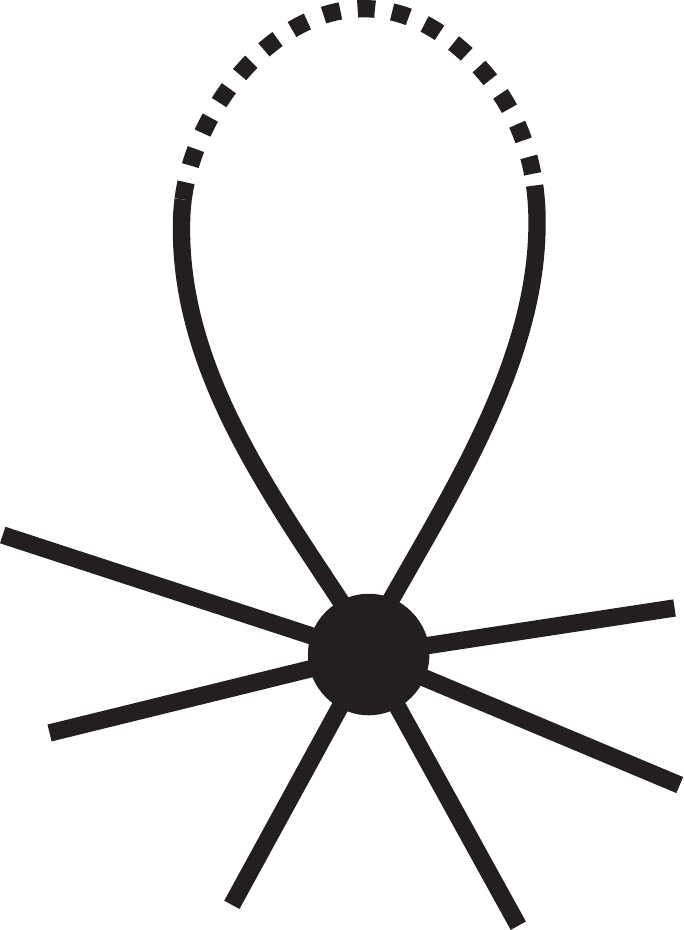}}
\hspace{0.3 cm}\mbox{\LARGE{$\leftrightarrow$}}\hspace{0.3 cm}
\raisebox{-30 pt}{\includegraphics[bb=0 0 196 291, width=50 pt]{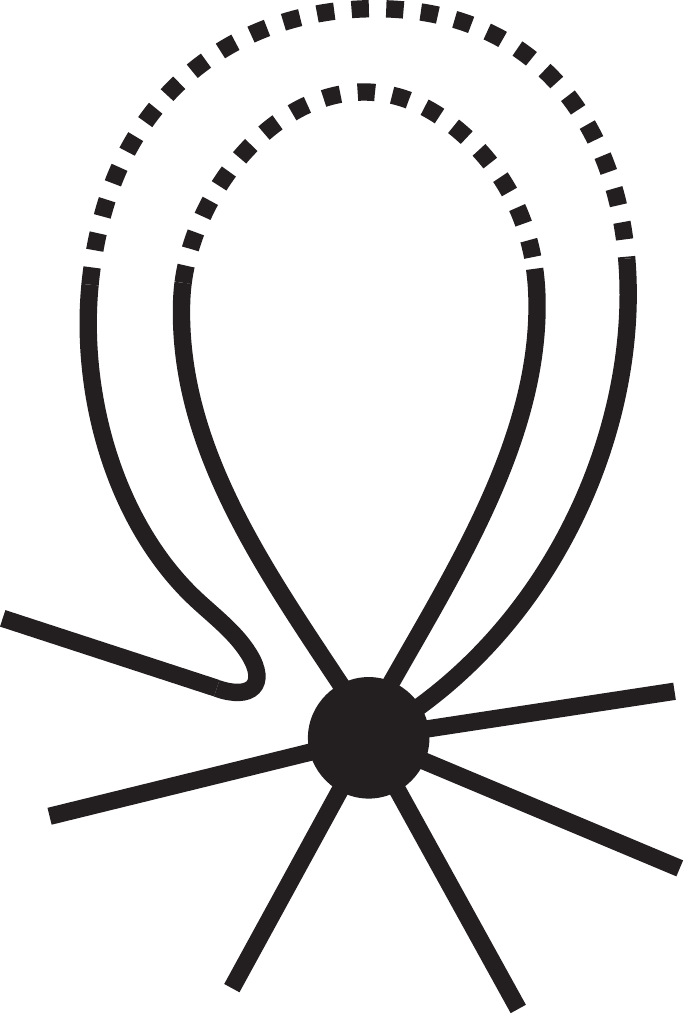}}
$$
\caption{Edge-sliding move.}\label{ES}
\end{figure}
\begin{proposition} \label{HandB}
Let $H$ and $H'$ be handlebody-links and let $\Gamma$ and $\Gamma'$ be bouquet graph presentations of $H$ and $H'$ respectively. Then $H$ and $H'$ are HL-homotopic if and only if $\Gamma$ and $\Gamma'$ are transformed to each other by a sequence of ambient isotopies, edge-slides and self-crossing changes.
\end{proposition}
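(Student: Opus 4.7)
The plan is to prove both directions by combining Theorem~\ref{EST} with the neighborhood-homotopy characterization of HL-homotopy given in the Remark above.

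For the ``if'' direction, I will check that each of the three permitted moves on a bouquet graph presentation preserves the HL-homotopy class of the handlebody-link it represents. Ambient isotopies do so trivially; an edge-slide preserves the handlebody-link itself by Theorem~\ref{EST}; and a self-crossing change of $\Gamma$ localized in a small ball $B$ can be transported to a self-crossing change on any spatial graph presentation of $H$ (applied to the corresponding arcs of the regular neighborhood), which is realized by a local ambient homotopy supported in $B$ that leaves the other components fixed, and is therefore an HL-homotopy.

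For the ``only if'' direction, I will use the Remark to produce a sequence of spatial graph presentations $G = G_0, G_1, \ldots, G_n = G'$ of handlebody-links $H = H_0, H_1, \ldots, H_n = H'$ in which consecutive $G_i, G_{i+1}$ differ by an ambient isotopy, a contraction move, or a self-crossing change. From each $G_i$ I will produce a bouquet graph presentation $B_i$ of $H_i$ by contracting a spanning tree in each component. Since $\Gamma$ and $B_0$ both represent $H$, Theorem~\ref{EST} connects them by a sequence of edge-slides; the same reasoning applies to $B_n$ and $\Gamma'$.

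The heart of the argument is to link $B_i$ to $B_{i+1}$ using the allowed bouquet moves. When the step $G_i \to G_{i+1}$ is an isotopy or a contraction move, both $G_i$ and $G_{i+1}$ represent $H_i = H_{i+1}$, so $B_i$ and $B_{i+1}$ represent the same handlebody-link and are connected by edge-slides via Theorem~\ref{EST}. When the step is a self-crossing change inside a $3$-ball $B$, I will choose the spanning tree in the affected component of $G_i$ so that its image avoids $B$; this is always possible because $B$ meets that component in only two small unknotted arcs. Contracting these spanning trees transports the crossing change to the bouquet level and produces a bouquet graph $B_i^\prime$ of $H_{i+1}$ that differs from the contraction of $G_i$ by a single self-crossing change, after which one further application of Theorem~\ref{EST} relates $B_i^\prime$ to $B_{i+1}$ by edge-slides. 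The main obstacle is precisely this careful spanning-tree choice around each self-crossing ball and the verification that the crossing change descends to a self-crossing change of loop edges at the contracted vertex; once that is in place, the rest is routine bookkeeping.
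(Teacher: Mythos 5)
Your overall architecture matches the paper's: reduce to a sequence $G_0,\dots,G_n$ of spatial graph presentations, contract a spanning tree in each to get bouquet presentations, and use Theorem~\ref{EST} to absorb the steps in which the handlebody-link does not change. The ``if'' direction and the contraction-move/isotopy steps are fine. However, there is a genuine gap at the crucial self-crossing-change step: your claim that the spanning tree of the affected component can always be chosen so that its image avoids the crossing-change ball $B$ is false. The ball meets the component in two arcs lying on at most two edges, but if one of these edges is a cut edge (bridge) of the component -- for instance the connecting bar of a dumbbell-shaped spine of a genus-two handlebody -- then \emph{every} spanning tree contains that edge, so no choice of tree avoids $B$. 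Since the self-crossing changes produced by an HL-homotopy are arbitrary, this case cannot be excluded, and your argument as written does not cover it.

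The paper's proof sidesteps exactly this point: it takes an arbitrary spanning tree $T_i$ of $G_i$, lets $T'_{i+1}$ be its image under the crossing change, and observes that contracting $T_i$ and $T'_{i+1}$ yields bouquet presentations $\Gamma_i$ and $\Gamma'_{i+1}$ that differ by \emph{some} (possibly several) self-crossing changes: when a tree edge passes through $B$, the contraction drags strands of several loop edges along that tree edge, so the single crossing change of $G_i$ becomes finitely many crossing changes among loop edges, all of which still belong to the same component and hence are allowed moves; Theorem~\ref{EST} then connects $\Gamma'_{i+1}$ to $\Gamma_{i+1}$ by edge-slides. To repair your proof you should either adopt this observation (drop the avoidance claim and allow one self-crossing change of $G_i$ to descend to several self-crossing changes of the bouquet), or add an argument handling crossing changes whose arcs lie on edges contained in every spanning tree, which your current tree-selection step does not provide.
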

\begin{proof} The ``if" part is obvious. We prove the ``only if" part.
Since $H$ and $H'$ are HL-homotopic, there is a sequence of spatial
graphs $G_1, \dots, G_n$, where $G_1$ and $G_n$ represent $H$ and $H'$
respectively and $G_{i+1}$ is obtained  from $G_i$ by a contraction
move or a self-crossing change. Let $H_i$ be a handlebody-link which
is represented by $G_i$. We have a sequence $\Gamma_1, \dots,
\Gamma_n$ of bouquet graph presentations of $H_1, \dots, H_n$ from
$G_1, \dots, G_n$ as follows. Fix a spanning tree $T_i$ of $G_i$ and
fix a vertex $v_i$ of $T_i$. Let $N(T_i)$ be the regular neighborhood
of $T_i$ in $H_i$. Contracting $T_i$ in $N(T_i)$ to $v_i$, we have a
bouquet graph presentation $\Gamma_i$ of $G_i$.
\par
Suppose that, for some $i$, $G_{i+1}$ is obtained from $G_i$ by a
self-crossing change of two edges $e_1$ and $e_2$. Let $T'_{i+1}$
be a spanning tree of $G_{i+1}$ obtained from $T_i$ by the
self-crossing change. By contracting $T'_{i+1}$ as above, we have a
bouquet graph presentation $\Gamma'_{i+1}$ of $H_{i+1}$. Then
$\Gamma'_{i+1}$ is obtained from $\Gamma_{i}$ by some self-crossing
changes corresponding to the self-crossing change of $e_1$ and
$e_2$.
 From Theorem \ref{EST}, since both $\Gamma_{i+1}$ and $\Gamma'_{i+1}$
represent $H_{i+1}$, $\Gamma_{i+1}$ is obtained from $\Gamma'_{i+1}$
by using edge-slides.
\par
If $G_{i+1}$ is obtained from $G_i$ by a contraction move, $\Gamma_i$
and $\Gamma_{i+1}$ represents the same handlebody-link therefore
$\Gamma_{i+1}$ is obtained from $\Gamma_i$ by using edge-slides. Then
$\Gamma_n$ is obtained from $\Gamma_1$ by using edge-slides and
self-crossing changes. Similarly, $\Gamma_1$ (resp. $\Gamma'$) is
obtained from $\Gamma$ (resp. $\Gamma_n$) by edge-slides. This
completes the proof.
\end{proof}
We note that for a bouquet graph presentation $\Gamma$ of a handlebody-link $H$, by giving arbitrary orientation to each loop edge of $\Gamma$, the loop edges of $\Gamma$ represent a basis of the first homology group $H_1(H; \mathbb{Z})$ of $H$. 
\section{Milnor's Invariant} \label{sec3}
We introduce the definition of Milnor's link-homotopy invariants, and 
to give invariants for handlebody-link in Section \ref{sec5},  
we show that these are additive under a bund sum for components. 

\subsection{Definition of Milnor's link-homotopy invariant}

Let $L=L_1 \cup \dots \cup L_n$ be an ordered oriented $n$-component link in ${S}^3$.
Consider the link group $\pi = \pi_1({S}^3 \setminus L_1 \cup \dots \cup L_{n-1})$ of $L_1 \cup \dots \cup L_{n-1}$ and denote the $i$-th meridian by $m_i$ for $i$ ($1\leq i \leq n-1$).

Given a finitely generated group $G$, the {\it reduced group} $\overline{G}$ is defined to the quotient of $G$ by its normal subgroup generated by $[g, hgh^{-1}]$ for any $g, h \in G$,
where $[a,b]$ means the commutator of $a$ and $b$.
Then $\overline{\pi}$ is generated by the meridians $m_1, m_2, \dots ,m_{n-1}$.

Let $\mathbb{Z}[[X_1, \dots, X_{n-1}]]$ be the non-commutative formal power series ring generated by $X_1, \dots, X_{n-1}$.
Denote by $\hat{Z}$ its quotient ring by the two-side ideal generated by all monomials in which at least one of the generators appear at least twice.
The {\it Magnus expansion} $\varphi$ is a homomorphism from the free group $F(m_1, \dots , m_{n-1})$ generated by $m_1, \dots , m_{n-1} $ into $\mathbb{Z}[[X_1, \dots, X_{n-1}]]$, defined by sending $m_i$ to $1+ X_i$ and $m_i^{-1}$ to $1-X_i+X_i^2 - \cdots $.
It induces a homomorphism from $\overline{F(m_1, \dots , m_{n-1})}$ into $\hat{Z}$.
Let $w_n \in {F(m_1, \dots , m_{n-1})}$ be a word representing $L_n$ in $\overline{\pi}$.
We then define ${\mu}_L(i_1 i_2 \ldots i_r n) $ for distinct indices $i_1, i_2, \dots , i_r, n $ as the coefficient of the Magnus expansion of $w_n$ in $\hat{Z}$:
\[ \varphi (w_n) = 1 + \sum {\mu}_L(i_1 i_2 \ldots i_r n) X_{i_1} X_{i_2} \dots X_{i_r}, \]
where the summation is over all sequences $i_1i_2 \ldots i_r$ with distinct indices between 1 and $n-1$. 
Similarly, we define ${\mu}_L(i_1 i_2 \ldots i_s)$ for any distinct indices between 1 and $n$. 
We define $\overline{\mu}_L(i_1 i_2 \ldots i_r n) $ as the residue class of ${\mu}_L(i_1 i_2 \ldots i_r n) $ modulo the indeterminacy $\Delta_L(i_1 i_2 \ldots i_r n) $ which is the greatest common divisor of   ${\mu}_L(j_1 j_2 \ldots j_s)$'s, where $j_1 j_2 \ldots j_s$ ranges over all sequences obtained by deleting at least one of the indices $i_1,i_2, \dots ,i_r, n$ and permuting the remaining ones cyclicly.
Moreover we define $\Delta_L(i_1 n)=0$.

\begin{theorem}[\cite{Mil01,Mil02}]
If $L$ and $L'$ are link-homotopic, then $\overline{\mu}_L(I) = \overline{\mu}_{L'}(I)$ for any sequence $I$ with distinct indices.
\end{theorem}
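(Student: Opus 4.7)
The plan is to reduce to the case of a single self-crossing change and then handle two subcases. Since link-homotopy is generated by ambient isotopy and self-crossing changes, and ambient isotopy plainly preserves everything, it suffices to show that if $L'$ is obtained from $L$ by one self-crossing change on some component $L_j$, then $\overline{\mu}_L(I) = \overline{\mu}_{L'}(I)$ for every sequence $I$ of distinct indices in $\{1,\dots,n\}$. By relabelling if necessary I would take the distinguished index in $I$ to be $n$, so the task is to control $w_n$ modulo conjugation and redundant presentation data.

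First consider the easy subcase $j = n$. Then $L_1 \cup \cdots \cup L_{n-1}$ is unchanged, so $\overline{\pi}$ and the meridians $m_1,\dots,m_{n-1}$ are literally the same before and after. A self-crossing change of $L_n$ is realised by a free homotopy of the circle $L_n$ inside the complement $S^3 \setminus (L_1 \cup \cdots \cup L_{n-1})$, so the conjugacy class of $w_n$ in $\pi$, and a fortiori in $\overline{\pi}$, is preserved. For the harder subcase $j < n$, I would invoke Milnor's lemma that the reduced group $\overline{\pi}$ is itself a link-homotopy invariant of $L_1 \cup \cdots \cup L_{n-1}$. Working from a Wirtinger presentation, this lemma reduces to verifying that the single Wirtinger relation altered by the self-crossing change is redundant once one quotients by the relators $[g, hgh^{-1}]$ defining $\overline{\pi}$, producing a canonical isomorphism $\overline{\pi}(L) \cong \overline{\pi}(L')$ carrying meridians to meridians. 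Under this isomorphism the class of $w_n$ is identified with the corresponding longitude class in $L'$, again up to conjugation.

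Both subcases thus funnel into a purely algebraic statement: the residue class of $\mu_L(i_1\ldots i_r n)$ modulo $\Delta_L(i_1\ldots i_r n)$ depends only on the conjugacy class of $w_n$ in $\overline{\pi}$. I would check this by expanding $\varphi(x w_n x^{-1}) = \varphi(x)\varphi(w_n)\varphi(x)^{-1}$ in $\hat Z$ and reading off the coefficient of $X_{i_1}\cdots X_{i_r}$. Its difference from $\mu_L(i_1\ldots i_r n)$ is a $\mathbb{Z}$-linear combination of coefficients $\mu_L(J)$ where $J$ is obtained from $i_1\ldots i_r n$ by deleting some indices and cyclically permuting the rest, which is exactly the list of generators of $\Delta_L(i_1\ldots i_r n)$; one uses here that, for any word $u$ whose image in $\overline{\pi}$ represents a closed loop, the Magnus coefficients are invariant under cyclic permutation of $u$.

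The principal obstacle is this last combinatorial/algebraic step: tracking how non-commutative products in $\hat Z$ redistribute letters under conjugation, and showing that every correction term factors through a $\mu_L$ of a strictly shorter, cyclically rearranged subsequence. A secondary subtlety is verifying that different lifts of $[w_n] \in \overline{\pi}$ to the free group $F(m_1,\ldots,m_{n-1})$ affect the expansion only within $\Delta_L$; this follows by the same circle of ideas, since the additional relators defining $\overline{\pi}$ from $F$ expand in $\hat Z$ to series whose monomials necessarily repeat a generator and so are killed, leaving only lower-order cyclic contributions absorbed by the indeterminacy.
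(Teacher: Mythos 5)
Your overall skeleton --- reduce to a single self-crossing change, split according to whether the changed component is the one whose word $w_n$ is read, invoke link-homotopy invariance of the reduced group $\overline{\pi}$ with meridians going to meridians, and finish with an algebraic statement about Magnus coefficients modulo $\Delta$ --- is the standard architecture of Milnor's argument (the paper itself gives no proof and simply cites \cite{Mil01,Mil02}). Moreover, the step you single out as the principal obstacle, conjugation invariance, is actually the easy part: expanding $\varphi(x)\varphi(w_n)\varphi(x)^{-1}$ and extracting the coefficient of $X_{i_1}\cdots X_{i_r}$, the correction terms are coefficients of proper contiguous subwords of $X_{i_1}\cdots X_{i_r}$ in $\varphi(w_n)$, i.e. values $\mu_L(J)$ with $J$ obtained from $I$ by deletion alone, and each of these is divisible by $\Delta_L(I)$.

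The genuine gap is in what you dismiss as the secondary subtlety. The kernel of $F(m_1,\dots,m_{n-1})\to\overline{\pi}$ is not normally generated by the reduction relators $[g,hgh^{-1}]$ alone: it also contains the honest relations of the link group, which by Milnor's presentation theorem are normally generated, modulo reduction, by the commutators $[m_j,\lambda_j]$ of each meridian with its longitude. Your claim that the additional relators expand in $\hat{Z}$ into monomials that repeat a generator is true only for the reduction relators (e.g. $[m_i,hm_ih^{-1}]$, every monomial of whose expansion contains $X_i$ twice); it fails for $[m_j,\lambda_j]$, whose expansion survives in $\hat{Z}$. Controlling the effect of these relators on the coefficient of $X_{i_1}\cdots X_{i_r}$ is the heart of Milnor's proof: it requires his (nontrivial) presentation of $\overline{\pi}$ and a computation showing that the resulting corrections are integer multiples of $\mu_L$ of subsequences of $I$ that are deleted \emph{and cyclically permuted} --- this is exactly why cyclic permutations appear in the definition of $\Delta_L(I)$, and it is also the real source of Lemma \ref{Milnor'sLemma} (1), which you invoke as if it were elementary. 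Without this ingredient neither the independence of $\overline{\mu}_L(I)$ from the choice of the representing word $w_n$ nor your case $j<n$ (where $w_n$ and $w'_n$ agree only as elements of $\overline{\pi}$, hence differ in $F$ precisely by such relators) is established. A smaller inaccuracy: in the case $j<n$ the altered Wirtinger relation is not ``redundant''; rather, the old and new relations become equivalent modulo reduction because the conjugating word changes by a meridian of the same component, and one should also record that $L_n$ lies outside the ball where the crossing change occurs, so the induced isomorphism preserves its class up to conjugacy.
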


\begin{lemma}[\cite{Mil02}] \label{Milnor'sLemma}
Let $L$ be an ordered oriented link.  Then the following relations hold. \\
(1) $\overline{\mu}_{L}{(i_1i_2 \ldots i_m)} = \overline{\mu}_{L}{(i_2 \ldots i_mi_1)}$  \\
(2) If the orientation of the $k$-th component of $L$ is reversed, then $\overline{\mu}_{L}{(i_1i_2 \ldots i_m)}$ is multiplied by $-1$ or $+1$ according as the sequence $i_1i_2 \ldots i_m$ contains $k$ once or not.
\end{lemma}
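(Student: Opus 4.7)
The plan is to work entirely inside the reduced group $\overline{\pi}$ and its Magnus image in $\hat{Z}$, exploiting two facts: in $\pi_1(S^3 \setminus L)$ each longitude $\lambda_k$ commutes with its own meridian $m_k$, which yields additional identities in $\overline{\pi}$; and the Magnus expansion $\varphi$ is a ring homomorphism, so $\varphi(w^{-1}) = \varphi(w)^{-1}$. Throughout I would write $\mu_L(i_1 \ldots i_{m-1} i_m)$ for the coefficient of $X_{i_1}\cdots X_{i_{m-1}}$ in $\varphi(w_{i_m})$, with $\overline{\mu}_L(I)$ its reduction modulo the indeterminacy ideal generated by the proper cyclic-and-deletion sub-coefficients that define $\Delta_L(I)$.

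For statement (1), I would induct on $m$, the base case $m=2$ being the symmetry of the ordinary linking number. The inductive step uses the identity $[m_{i_m}, \lambda_{i_m}] = 1$ in $\overline{\pi}$: a conjugation of the defining word implements a cyclic shift of indices on the Magnus side, and the difference between the cyclically shifted coefficient and the original is a $\mathbb{Z}$-linear combination of Magnus coefficients indexed by proper subsequences of $i_1\ldots i_m$. By the very definition of $\Delta_L(i_1 \ldots i_m)$, these correction terms are absorbed, giving $\overline{\mu}_L(i_1 \ldots i_m) = \overline{\mu}_L(i_2 \ldots i_m i_1)$.

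For statement (2), I would analyze the effect of reversing the orientation of $L_k$ in three cases. If $k = i_m$, then $w_{i_m}$ is replaced by $w_{i_m}^{-1}$; since $\varphi(w_{i_m}) = 1 + (\text{terms of degree}\geq 1)$, inversion in $\hat{Z}$ negates each degree-$(m-1)$ coefficient modulo products of two non-constant terms, hence modulo $\Delta_L(i_1 \ldots i_m)$. If $k \in \{i_1, \ldots, i_{m-1}\}$, then $m_k$ is replaced by $m_k^{-1}$, so the Magnus variable $X_k$ is replaced by $-X_k + X_k^2 - \cdots$; since the indices are distinct, $X_k$ appears exactly once in the monomial of interest, yielding a single sign flip modulo $\Delta_L$. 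If $k \notin \{i_1, \ldots, i_m\}$, the substitution affects only monomials containing $X_k$, which do not contribute to $\mu_L(i_1 \ldots i_m)$. Combining the cases gives the stated rule.

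I expect the cyclic invariance (1) to be the main obstacle: it fails at the level of $\mu_L$ and holds only after passing to $\overline{\mu}_L$, so the combinatorial identification of the effect of conjugation in $\overline{F(m_1, \ldots, m_{n-1})}$ as a cyclic shift of Magnus coefficients, up to an error supported on proper subsequences, is the genuinely subtle step. By contrast, the sign computation in (2) is a formal manipulation in $\hat{Z}$ once the framework is in place.
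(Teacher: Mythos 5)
The paper itself gives no argument for this lemma --- it is quoted from Milnor's paper [Mil02] --- so your attempt has to be measured against Milnor's original proof. Your sketch of (2) is essentially that proof and is fine as far as it goes: reversing the orientation of $L_k$ replaces $w_{i_m}$ by (a conjugate of) its inverse when $k=i_m$, and replaces the generator $m_k$ by a conjugate of $m_k^{-1}$ when $k\in\{i_1,\dots,i_{m-1}\}$; in both cases the correction terms are sums of products of $\mu$'s of proper subsequences, hence are absorbed by $\Delta_L(i_1\ldots i_m)$, and indices $k$ not occurring in $I$ do not affect the monomial $X_{i_1}\cdots X_{i_{m-1}}$. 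You should still say a word about the conjugation ambiguity in the choice of $w_{i_m}$ and of the meridians, but that is routine and also absorbed by $\Delta_L$.

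The genuine gap is in your argument for (1). The two sides of the cyclic identity are read off from two \emph{different} longitudes: $\overline{\mu}_L(i_1 i_2\ldots i_m)$ is a coefficient of $\varphi(w_{i_m})$, whereas $\overline{\mu}_L(i_2\ldots i_m i_1)$ is a coefficient of $\varphi(w_{i_1})$. The single peripheral relation $[m_{i_m},\lambda_{i_m}]=1$, combined with conjugating the defining word, only produces congruences among coefficients attached to the component $i_m$ (conjugation invariance is what shows $\overline{\mu}$ is well defined, not that it is cyclically symmetric); it contains no mechanism for transferring information from $\varphi(w_{i_m})$ to $\varphi(w_{i_1})$, which is exactly what the cyclic shift requires. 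Milnor's proof of cyclic symmetry uses the full presentation of the nilpotent quotients of the link group, i.e.\ all the relations $[m_j,\lambda_j]=1$ at once together with the redundancy among them coming from the fact that the link lies in the closed manifold $S^3$ (one of the $n$ peripheral relations is a consequence of the others), and then a careful analysis of the resulting congruences modulo the relator ideal, which is where $\Delta_L$ enters. Your induction on $m$ with base case the symmetry of the linking number never invokes this global input, so the inductive step as described would not go through; you correctly identify (1) as the subtle half, but the idea you propose for it is not sufficient to prove it.
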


\subsection{Additivity property of Milnor's link-homotopy invariant}

The following lemma is used for Section \ref{sec5}, which is showed by using the definition of Milnor's link-homotopy invariants.

\begin{lemma}\label{add}
Let $L=L_1 \cup L_2 \cup \dots \cup L_{n-1}$ be an $(n-1)$-component link in $S^3$.
Let $K$ and $K'$ be disjoint knots in $S^3 \setminus L$.
Let $I$ be a sequence with distinct indices in $\{ 1, 2, \dots, n \}$.
If $I$ contains the index $n$,
$$\mu_{L \cup (K \sharp_b K')} (I) \equiv  \mu_{L \cup K} (I) + \mu_{L \cup K'} (I)  
\mod{ \gcd(\Delta_{L \cup K}(I), \Delta_{L \cup K'}(I))}, 
$$
where $K \sharp_b K'$ is a band sum of $K$ and $K'$ with respect to any band,
and $L \cup (K \sharp_b K')$, $L \cup K$ and $L \cup K'$ are $n$-component links whose $n$-th components are $K \sharp_b K'$, $K$ and $K'$, respectively.
\end{lemma}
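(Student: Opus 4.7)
My plan is to realize $K\sharp_b K'$ as a word in $\pi_1(S^3\setminus L)$ built from $w_K$, $w_{K'}$, and a conjugating element, apply the Magnus expansion, and absorb all correction terms into the indeterminacy. Choosing a basepoint $p$ on $K$ and inspecting the band sum construction, one checks that $K\sharp_b K'$ is represented (up to cyclic permutation of letters, which is harmless for $\mu$ by Lemma~\ref{Milnor'sLemma}) by the word
\[
w_{K\sharp_b K'} \;=\; w_K\cdot \alpha\, w_{K'}\, \alpha^{-1}
\]
for some $\alpha\in F(m_1,\ldots,m_{n-1})$ encoding the detour along the core of the band (any basepoint ambiguity for $w_{K'}$ is absorbed into $\alpha$). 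Applying $\varphi$ gives $\varphi(w_{K\sharp_b K'})=\varphi(w_K)\,\varphi(\alpha)\,\varphi(w_{K'})\,\varphi(\alpha)^{-1}$ in $\hat Z$, and the claim reduces to extracting the coefficient of $X_{i_1}\cdots X_{i_r}$, where $I=i_1\cdots i_r n$.

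I expand this product of four power series as a sum over ordered splittings $(J_1,J_2,J_3,J_4)$ of $(i_1,\ldots,i_r)$ into four consecutive (possibly empty) subsequences, with $J_k$ contributed by the $k$-th factor, and partition the sum by $J_3$. First, all splittings with $J_3=\emptyset$ collectively equal the coefficient of $X_{i_1}\cdots X_{i_r}$ in $\varphi(w_K)\,\varphi(\alpha)\,\varphi(\alpha)^{-1}=\varphi(w_K)$, contributing exactly $\mu_{L\cup K}(I)$, thanks to the cancellation $\varphi(\alpha)\varphi(\alpha)^{-1}=1$. Second, the unique splitting with $J_3=(i_1,\ldots,i_r)$ (and $J_1,J_2,J_4$ empty) contributes $\mu_{L\cup K'}(I)$. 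Third, every remaining splitting has $J_3$ a nonempty proper subsequence of $(i_1,\ldots,i_r)$, so its $\varphi(w_{K'})$-factor equals $\mu_{L\cup K'}(J_3\, n)$, a Milnor coefficient of $L\cup K'$ indexed by a proper subsequence of $I$ obtained by deletion; hence by definition it is divisible by $\Delta_{L\cup K'}(I)$, and in particular by $\gcd(\Delta_{L\cup K}(I),\Delta_{L\cup K'}(I))$.

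Summing the three cases yields the asserted congruence. The main obstacle is the bookkeeping in the third case: one must verify that the subsequence $J_3$ arising from the expansion is always a legitimate distinct-indexed proper subsequence appearing in Milnor's gcd definition of $\Delta_{L\cup K'}(I)$ (immediate from $I$ having distinct indices and $J_3$ being a consecutive subword), and that the remaining three factors from $\varphi(w_K),\varphi(\alpha),\varphi(\alpha)^{-1}$ are integers so the product stays in the ideal generated by $\Delta_{L\cup K'}(I)$. The observation $\varphi(\alpha)\varphi(\alpha)^{-1}=1$ is the key simplification that eliminates all dependence on the auxiliary band path.
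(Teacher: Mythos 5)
Your first half is essentially the paper's argument: you expand the Magnus series of a product word representing $K\sharp_b K'$, identify the two ``pure'' contributions $\mu_{L\cup K}(I)$ and $\mu_{L\cup K'}(I)$, and absorb every cross term into the indeterminacy because its $\varphi(w_{K'})$-factor is a coefficient $\mu_{L\cup K'}(J_3\,n)$ indexed by a proper subsequence of $I$, hence divisible by $\Delta_{L\cup K'}(I)$ and a fortiori by the gcd. (The paper simply absorbs the band conjugator into the choice of $w'_n$ and writes the representative as $w_n\cdot w'_n$; your explicit $\varphi(\alpha)\varphi(\alpha)^{-1}=1$ cancellation is a harmless variant of that step, and the reduction to sequences ending in $n$ via cyclic symmetry is the same in both proofs.)

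However, there is a genuine gap: you prove the congruence only for the one word $w_K\alpha w_{K'}\alpha^{-1}$ that you construct. The quantity $\mu_{L\cup(K\sharp_b K')}(I)$ is defined only up to the choice of a word representing the $n$-th component in $\overline{\pi}$ (basepoint, conjugation, relations in the reduced group), and different choices change it by multiples of $\Delta_{L\cup(K\sharp_b K')}(I)$ --- this is exactly the ambiguity you wave at when you say the cyclic permutation ``is harmless by Lemma~\ref{Milnor'sLemma}'', since that lemma is a statement about $\overline{\mu}$, i.e.\ holds only modulo the band-sum link's own indeterminacy. So what you have shown is the congruence modulo $\gcd\bigl(\Delta_{L\cup K}(I),\Delta_{L\cup K'}(I),\Delta_{L\cup(K\sharp_b K')}(I)\bigr)$, and to obtain the asserted statement (which must hold for any admissible choice, as it is later applied to the residues $\overline{\mu}$ in Proposition~\ref{general invariant}) you still need
$\gcd(\Delta_{L\cup K}(I),\Delta_{L\cup K'}(I)) \mid \Delta_{L\cup(K\sharp_b K')}(I)$.
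This divisibility is not automatic; the paper devotes the entire second half of its proof to it, arguing by induction on $|I|$: each $\mu_{L\cup(K\sharp_b K')}(J)$ for a proper subsequence $J$ of $I$ equals $\mu_{L\cup K}(J)$ (if $n\notin J$) or, by the inductive hypothesis, $\mu_{L\cup K}(J)+\mu_{L\cup K'}(J)$ modulo $\gcd(\Delta_{L\cup K}(J),\Delta_{L\cup K'}(J))$, and all of these are divisible by $\gcd(\Delta_{L\cup K}(I),\Delta_{L\cup K'}(I))$. Your splitting argument can be reused inside that induction, but as written the step ``summing the three cases yields the asserted congruence'' overlooks this, so the proof is incomplete.
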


\begin{remark}
By a property of the $\bar{\mu}$-invariant, we can obtain the same result for a band sum of the $i$-th component instead of the $n$-th component. 
\end{remark}

\begin{proof}
Assume the last index of $I$ is $n$. 
Choosing words $w_n$, $w'_n$ in $F(m_1, \dots ,m_{n-1})$ representing $K$, $K'$ in $\overline{\pi_1({S}^3 \setminus L)}$ appropriately, we may assume $w_n \cdot w'_n$ is a word representing $K \sharp_b K'$.
By the definition of indeterminacy ${\Delta}$, we have that 
 \begin{align*}
\mu_{L \cup (K \sharp_b K')} {(I)} 
& \equiv \sum \mu_{L \cup K} {(I_1 n)} \cdot \mu_{L \cup K'} {(I_2 n)}  \\
& \equiv \mu_{L \cup K}{(I)}   + \mu_{L \cup K'}{(I)} \\
& \mod{ \gcd(\Delta_{L \cup K}(I), \Delta_{L \cup K'}(I), \Delta_{L \cup (K \sharp_b K')}(I))}, 
\end{align*}
where the summation is over all sequences $I_1$, $I_2$ such that the sequence $I_1 I_2 n$ is equal to $I$ (it is possible that either $I_1$ or $I_2$ is empty).
By Lemma~\ref{Milnor'sLemma} (1), we have that for any sequence $I$ which contains the index $n$,
\begin{align*}
\mu_{L \cup (K \sharp_b K')} {(I)}  
 \equiv & \mu_{L \cup K}{(I)}  + \mu_{L \cup K'}{(I)}   
\mod{ \gcd(\Delta_{L \cup K}(I), \Delta_{L \cup K'}(I), \Delta_{L \cup (K \sharp_b K')}(I))}.
\end{align*}

It remains to prove that for any sequence $I$ with distinct indices in $\{ 1,2, \dots ,n \}$, 
\begin{align}\label{induction}
\gcd(\Delta_{L \cup K}(I), \Delta_{L \cup K'}(I), \Delta_{L \cup (K \sharp_b K')}(I)) 
= \gcd(\Delta_{L \cup K}(I), \Delta_{L \cup K'}(I)). 
\end{align}
The proof is by induction on the length $m$ of sequence $I$.
For the case $m=2$, $\Delta_{L \cup K}(I)$, $\Delta_{L \cup K'}(I)$ and $\Delta_{L \cup (K \sharp_b K')}(I)$ are all zero, by the definition of $\Delta$.
Assume the formula (\ref{induction}) holds for $m-1$, we will prove it for $m$.
Let the length of $I$ be $m$ and $J$ be a subsequence of $I$ whose length is $m-1$. 
If $J$ does not contain the index $n$, 
$$\mu_{L \cup (K \sharp_b K')} {(J)} \equiv \mu_{L \cup K}{(J)} \mod \Delta_{L \cup K}(J) (= \Delta_{L \cup (K \sharp_b K')}(J)). $$
If $J$ contains the index $n$,  by the assumption of induction, 
$$\mu_{L \cup (K \sharp_b K')} (J) \equiv  \mu_{L \cup K} (J) + \mu_{L \cup K'} (J)  
\mod{ \gcd(\Delta_{L \cup K}(J), \Delta_{L \cup K'}(J))}.$$
Therefore  $\mu_{L \cup (K \sharp_b K')} (J)$ can be divided by $\gcd(\Delta_{L \cup K}(I), \Delta_{L \cup K'}(I))$ for any $J$.
Thus $\Delta_{L \cup (K \sharp_b K')}(I)$ can be divided by $\gcd(\Delta_{L \cup K}(I), \Delta_{L \cup K'}(I))$, and the formula (\ref{induction}) holds for $m$.
\end{proof}  

\begin{remark}
In \cite{K}, V. S.~Krushkal showed Milnor's $\overline{\mu}$-invariants are additive under connected sum for links which are separated by a 2-sphere.    
\end{remark}

\section{HyperMatrix} \label{sec4}
We introduce a hypermatrix and define its three transformations. We can identify a hypermatrix with a tensor.

\subsection{Hypermatrix and transformations} 
A {\textit hypermatrix} is a generalization of a matrix to higher dimensions, i.e. numbers arranged in hyperrectangle form. A hypermatrix is defined rigorously as follows.
For $m_1, \dots , m_d \in \mathbb{N}$, a map $f :\langle m_1 \rangle \times \dots \times \langle m_d \rangle \to \mathbb{Z}_{\delta}$ is a {\it d-hypermatrix} of size $(m_1, \dots , m_d)$ with coefficients in $\mathbb{Z}_{\delta}$,
where $\langle m_i \rangle = \{ 1,2, \dots ,m_i\}$, $\delta$ is a non-negative integer and $\mathbb{Z}_{\delta}=\mathbb{Z}$ if $\delta=0$ otherwise $\mathbb{Z}_{\delta}=\mathbb{Z}/\delta\mathbb{Z}$. 
Denote $ f ({k_1}, \dots  ,{k_d})$ by $a_{{k_1} \ldots {k_d}}$, and $f$ by $A=(a_{{k_1} \ldots {k_d}})_{k_1, k_2, \dots ,k_d=1}^{m_1, m_2, \dots ,m_d}$.
The set of $d$-hypermatrices of size $(m_1, \dots , m_d)$ with $\mathbb{Z}_{\delta}$-coefficients is denoted by $M(m_1, \dots , m_d; \mathbb{Z}_{\delta})$ and its element is called an $m_1 \times \dots \times m_d$ hypermatirx. If $\delta=0$, we simplify $M(m_1, \dots , m_d; \mathbb{Z})$ to $M(m_1, \dots , m_d)$.
A $3$-hypermatrix $A=(a_{jkl})_{j,k,l=1}^{4,3,2} \in M(4,3,2)$ can be written down as two slices of $4 \times 3$ matrices
\[
  A = \left(
    \begin{array}{ccc}
      a_{111} & a_{121}  & a_{131} \\
      a_{211} & a_{221} &  a_{231} \\
      a_{311} & a_{321} &  a_{331} \\
      a_{411} & a_{421} &  a_{431} \\
    \end{array}
  \right| 
  \left.
\begin{array}{ccc}
      a_{112} & a_{122}  & a_{132} \\
      a_{212} & a_{222} &  a_{232} \\
      a_{312} & a_{322} &  a_{332} \\
      a_{412} & a_{422} &  a_{432} \\    
      \end{array}
  \right) \in M(4,3,2),
\]
where $j$, $k$ and $l$ index the row, column and slice, respectively.  

Applying ${\rm GL}(m_i, \mathbb{Z})$ action to the $i$-th coordinate, we have the following three transformations of $M(m_1,m_2, \dots ,m_d; \mathbb{Z}_{\delta})$.
\begin{itemize}
\item[(TI)] For fixed $j_1, j_2 \in \{ 1, 2, \dots, m_i\}$ $(j_1 \neq j_2)$, exchanging entries $a_{{k_1} \ldots k_{i-1} j_1 k_{i+1} \ldots {k_d}}$ and $a_{{k_1} \ldots k_{i-1} j_2 k_{i+1} \ldots {k_d}}$ 
for every $k_1, \dots , k_{i-1}, k_{i+1}, \dots , {k_d}$. 
\item[(TII)] For a fixed $j \in \{ 1, 2, \dots , m_i\}$, multiplying $\pm 1$ to an entry $a_{{k_1} \ldots k_{i-1} j k_{i+1} \ldots {k_d}}$ for every $k_1, \dots , k_{i-1}, k_{i+1}, \dots , {k_d}$. 
\item[(TIII)] For fixed $j_1, j_2 \in \{ 1, 2, \dots , m_i\}$ $(j_1 \neq j_2)$, adding an integer multiplied entry $a_{{k_1} \ldots k_{i-1} j_1 k_{i+1} \ldots {k_d}}$ to another entry $a_{{k_1} \ldots k_{i-1} j_2 k_{i+1} \ldots {k_d}}$ for every $k_1, \dots , k_{i-1}, k_{i+1}, \dots , {k_d}$.
\end{itemize}
If $\delta=0$ (i.e. coefficients are integers), these transformations coincide with the \textit{elementary transformations}.

\begin{remark} \label{THid}
A $\mathbb{Z}_{\delta}$-module $\mathbb{Z}_{\delta}^{m_1} \otimes_{\mathbb{Z}}  \dots \otimes_{\mathbb{Z}}\mathbb{Z}_{\delta}^{m_d}$ can be identified with  $M({m_1, \dots , m_d}; \mathbb{Z}_{\delta})$ as follows. 
For any $T$ in $\mathbb{Z}_{\delta}^{m_1} \otimes_{\mathbb{Z}} \dots \otimes_{\mathbb{Z}} \mathbb{Z}_{\delta}^{m_d}$, it can be represented by
$$T = \sum_{k_1, \dots ,k_d=1}^{m_1,\dots ,m_d} a_{k_1 \ldots k_d} {\overline{\boldsymbol e}}_{k_1}^1 \otimes \dots \otimes \overline{\boldsymbol e}_{k_d}^d,$$
where $a_{k_1 \ldots k_d}\in \mathbb{Z}_{\delta}$, and ${\overline{\boldsymbol e}}_{k_i}^i$ is an element of basis of  $\mathbb{Z}_{\delta}^{m_i}$ as a $\mathbb{Z}_{\delta}$-module for any $i$ ($1 \leq i \leq d$) and $k_i$ ($1 \leq k_i \leq m_i$).
By a map from $T$ to $(a_{{k_1} \ldots {k_d}})_{k_1, \dots ,k_d=1}^{m_1, \dots ,m_d}$,
a tensor can be identified with a hypermatrix.
\end{remark}

\section{Main Theorem} \label{sec5}
We give a map from handlebody-links to a tensor product space by using Milnor's $\overline{\mu}$-invariant. 
This map induces a necessary and sufficient condition of that a handlebody-link is HL-homotopic to the trivial one, and a bijection from HL-homotopy classes of almost trivial handlebody-links to a union of direct sums of the tensor product spaces modulo some action. Through the bijection, we define some comparable invariants for HL-homotopy classes of handlebody-links.

\subsection{$\overline{\mu}$-invariant for handlebody-links} \label{mapdef}
\par
Let $H=L_1 \cup  \dots \cup L_n$ be an $n$-component handlebody-link with genus $g_i$ for each $i$.
Let $\mathcal{B}=\{ e_{1}^{1}, \dots , e_{g_1}^{1}, \dots ,  e_{1}^{n} \dots , e_{g_n}^{n} \}$ be a basis of the first homology group $H_1(H; \mathbb{Z})$ where $e_{j}^{i}$ is the $j$-th element of the basis of $H_1(L_i,\mathbb{Z})$.
We can regard the basis as embedded closed oriented circles in ${S}^3$.
Therefore $e_{k_1}^{1} \cup e_{k_2}^{2} \cup \dots \cup e_{k_n}^{n}$ can be regarded as a link for each $k_i$ ($1 \leq k_i \leq g_i$).
Let $I=i_1i_2 \dots i_m$ ($m \leq n$) be a sequence with distinct indices in $\{ 1, 2, \dots , n \}$.  
For each $I$, we define an element $t_{H, \mathcal{B}}(I) \in {(\mathbb{Z}_{{\Delta}_I})}^{g_{1}} \otimes \dots \otimes {(\mathbb{Z}_{{\Delta}_I})}^{g_{n}}$ as 
$$ t_{H,\mathcal{B}}(I) := \sum_{{k_1}, \dots ,{k_n}=1}^{g_{1}, \dots, g_{n}}  \overline{\mu}_{e_{{k_1}}^{1} \cup  \dots \cup e_{{k_n}}^{n} }{(I)} \ {\overline{\boldsymbol{e}}_{{k_1}}^{1} \otimes \dots \otimes \overline{\boldsymbol{e}}_{{k_n}}^{n}},$$
where $\overline{\mu}_{e_{k_{1}}^{1} \cup \dots \cup e_{k_{n}}^{n} }{(I)}\in \mathbb{Z}_{{\Delta}_I}$, ${\Delta}_I$ is the greatest common divisor of all ${\mu}_{e_{k_{1}}^{1} \cup  \dots \cup e_{k_{n}}^{n} }{(J)}$ for all $k_1, \dots, k_n$ and $J$ obtained from $I$ by deleting at least one index and permuting the remaining ones cyclically, and $\overline{\boldsymbol{e}}^{i}_{k_i}$ is a fixed basis
of $(\mathbb{Z}_{{\Delta}_I})^{g_i}$ as $\mathbb{Z}_{{\Delta}_I}$-module. Since Milnor's $\overline{\mu}$-invariants are link-homotopy invariants, $t_{H,\mathcal{B}}(I)$ is invariant under an HL-homotopy preserving $\mathcal{B}$.

\begin{remark}
(1) For an oriented spatial graph $\Gamma$ whose components are all bouquet graphs, we can define an element $t_{\Gamma}(I) \in {(\mathbb{Z}_{{\Delta}_I})}^{g_{1}} \otimes \dots \otimes {(\mathbb{Z}_{{\Delta}_I})}^{g_{n}}$ as above, where $g_i$ is a number of edges of the $i$-th component of $\Gamma$. Let $e^i_j$ be the $j$-th loop edge of the $i$-th component of $\Gamma$. Then we define 
$$t_{\Gamma}(I) := \sum_{{k_1}, \dots ,{k_n}=1}^{g_{1}, \dots, g_{n}}  \overline{\mu}_{e_{{k_1}}^{1} \cup  \dots \cup e_{{k_n}}^{n} }{(I)} \ {\overline{\boldsymbol{e}}_{{k_1}}^{1} \otimes \dots \otimes \overline{\boldsymbol{e}}_{{k_n}}^{n}}.$$
If $\Gamma$ is an oriented bouquet graph presentation of a handlebody-link $H$ with respect to a basis $\mathcal{B}$, then $t_{\Gamma}(I) =t_{H,\mathcal{B}}(I)$.
\par
(2) An indeterminacy ${\Delta}_I$ coincides with the greatest common divisor of all ${\Delta}_{e_{k_1}^{1} \cup \dots \cup e_{k_n}^{n} }{(I)}$ for all $k_1, \dots, k_n$, where ${\Delta}_{e_{k_1}^{1} \cup \dots \cup e_{k_n}^{n} }{(I)}$ is an indeterminacy of the original Milnor's invariant for the link $e_{k_1}^{1} \cup e_{k_2}^{2} \cup \dots \cup e_{k_n}^{n}$.
We assume that the first homology group of each component of $H$ is $\mathbb{Z}$, 
i.e. all genera of components of $H$ are 1.
Then, $t_{H, \mathcal{B}}(I)$ is identified with the original Milnor's link-homotopy invariant for a link, essentially. 
\end{remark}

We consider a general linear group action to tensors. For any $t$ in a $\mathbb{Z}_{{\Delta}_I}$-module ${(\mathbb{Z}_{{\Delta}_I})}^{g_{1}} \otimes \dots \otimes {(\mathbb{Z}_{{\Delta}_I})}^{g_{n}}$,
we can represent it as
\[t = \sum_{{k_1}, \dots ,{k_n}=1}^{g_{1}, \dots ,g_{n}} a_{{{k_1}}  \ldots  {k_n}} \ {\overline{\boldsymbol{e}}_{{k_1}}^{1} \otimes \dots \otimes \overline{\boldsymbol{e}}_{{k_n}}^{n}}, \] 
where $a_{{{k_1}}  \ldots  {k_n}} \in \mathbb{Z}_{{\Delta}_I}$. 
We define an action $\rho$ of $GL(g_{1}, \mathbb{Z}) \times  \dots \times GL(g_{n}, \mathbb{Z})$ on ${(\mathbb{Z}_{{\Delta}_I})}^{g_{1}} \otimes \dots \otimes {(\mathbb{Z}_{{\Delta}_I})}^{g_{n}}$ as follows.
For any $A_i$ in $GL(g_{i}, \mathbb{Z})$, 
\[ (A_1,  \dots, A_n) \cdot t = \sum_{{k_1}, \dots ,{k_n}=1}^{g_{1}, \dots ,g_{n}} a_{{{k_1}}  \ldots  {k_n}} \ (A_1 {\overline{\boldsymbol{e}}_{{k_1}}^{1} ) \otimes \dots \otimes ( A_n \overline{\boldsymbol{e}}_{{k_n}}^{n}} ). \] 
We consider the residue class of $t_{H, \mathcal{B}}(I)$ by the action $\rho$ for ${(\mathbb{Z}_{{\Delta}_I})}^{g_{1}} \otimes \dots \otimes {(\mathbb{Z}_{{\Delta}_I})}^{g_{n}}$ and denote it by $t_H(I)$. In fact it is independent of $\mathcal{B}$ as follows.

\begin{proposition}\label{general invariant}
Let $H$ be an $n$-component handlebody-link.
Then $t_H(I)$ is independent of a basis $\mathcal{B}$ of $H_1(H, \mathbb{Z})$ and an HL-homotopy invariant. 
\end{proposition}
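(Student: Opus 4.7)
The plan is to anchor everything on a bouquet graph presentation. If $\Gamma$ is a bouquet graph presentation of $H$ whose oriented loop edges realize the basis $\mathcal{B}$, then $t_{H,\mathcal{B}}(I) = t_\Gamma(I)$ by Remark \ref{THid} together with the definition. By Proposition \ref{HandB} any HL-homotopy between two handlebody-links lifts to a sequence of ambient isotopies, edge-slides, and self-crossing changes between their bouquet graph presentations; and any $GL$-basis change of $\bigoplus_i H_1(L_i;\mathbb{Z})$ can be realized on loop edges by iteration of edge-slides, orientation reversals, and index permutations of loops. Both basis independence and HL-homotopy invariance thus reduce to checking that these local moves alter $t_\Gamma(I)$ only by an element of the $\rho$-action of $GL(g_1,\mathbb{Z})\times\cdots\times GL(g_n,\mathbb{Z})$.

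Ambient isotopies preserve each sub-link pointwise and hence preserve every Milnor coefficient together with the indeterminacy $\Delta_I$. An edge-slide of $e^i_l$ along $e^i_j$ replaces $e^i_l$ by a circle homologous to $e^i_l + e^i_j$, realized concretely as a band sum $e^i_l \sharp_b e^i_j$. Applying Lemma \ref{add} in the $i$-th slot shows that the replacement adds $\overline{\mu}_{\cdots\cup e^i_j\cup\cdots}(I)$ to $\overline{\mu}_{\cdots\cup e^i_l\cup\cdots}(I)$ modulo $\Delta_I$, for every tuple $(k_1,\ldots,k_n)$ with $k_i=l$; via Remark \ref{THid} this is exactly the TIII transformation of the tensor in the $i$-th coordinate by an elementary matrix in $GL(g_i,\mathbb{Z})$. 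Orientation reversal of a loop realizes TII by Lemma \ref{Milnor'sLemma}(2), and swapping two loops realizes TI. Since TI, TII, and TIII generate $GL(g_i,\mathbb{Z})$, every edge-slide or block-diagonal basis change modifies $t_\Gamma(I)$ only by an element of the $\rho$-action.

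Only self-crossing changes remain. Consider such a move on a bouquet component $\Gamma_i$ between two arcs lying on loops $e^i_{j_0}$ and $e^i_{j_1}$. If $j_0 = j_1$, then this is a genuine self-crossing change of the circle $e^i_{j_0}$ as the $i$-th component of every sub-link with $k_i = j_0$, so Milnor's link-homotopy invariance preserves every coefficient of $t_\Gamma(I)$ exactly, while sub-links with $k_i \neq j_0$ are untouched. If $j_0 \neq j_1$, choose a band $b$ near the crossing and form the band sum $c = e^i_{j_0}\sharp_b e^i_{j_1}$; the given crossing change becomes a \emph{self}-crossing change of the circle $c$. Denoting by $L$ the sub-link obtained by deleting the $i$-th component, we obtain $\overline{\mu}_{L\cup c}(I) = \overline{\mu}_{L\cup c'}(I)$ after the move, and two applications of Lemma \ref{add} (before and after) combined with cancellation of the untouched term $\overline{\mu}_{L\cup e^i_{j_1}}(I)$ yield
\[
\overline{\mu}_{L\cup e^i_{j_0}}(I) \;\equiv\; \overline{\mu}_{L\cup \widetilde{e}^{\,i}_{j_0}}(I) \pmod{\Delta_I};
\]
sub-links with $k_i\notin\{j_0,j_1\}$ are unchanged, so every coefficient of $t_\Gamma(I)$ is preserved.

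The main technical obstacle is the bookkeeping of indeterminacies. Lemma \ref{add} gives congruences modulo local gcds of $\Delta$'s attached to individual sub-links, and one must verify that each such local gcd is a multiple of the global $\Delta_I$ of the handlebody-link; one must also check that $\Delta_I$ itself is unchanged by all four moves above, so that $t_H(I)$ and its image after the move live in the same $\mathbb{Z}_{\Delta_I}$-module. Both are handled by an induction on $|I|$ which simultaneously establishes mod-$\Delta_J$ preservation of $t_\Gamma(J)$ for every proper cyclic subsequence $J$ of $I$, in the same spirit as the induction already used in the proof of Lemma \ref{add}.
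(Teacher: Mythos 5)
Your argument is correct in substance, but it takes a different route from the paper. The paper never decomposes the HL-homotopy into moves on bouquet presentations: it works directly with the basis $\mathcal{B}$ of $H_1(H;\mathbb{Z})$, running an induction on $|I|$ that simultaneously proves (A) the indeterminacy $\Delta_I$ is basis-independent and (B) the transformation rules under the three elementary basis changes (sign change via Lemma \ref{Milnor'sLemma}(2), band sum via Lemma \ref{add}), concluding that an elementary change acts on $t_{H,\mathcal{B}}(I)$ exactly by the $\rho$-action; HL-homotopy invariance for a \emph{fixed} basis is then immediate, since the homotopy carries the basis circles along and each sublink $e^1_{k_1}\cup\dots\cup e^n_{k_n}$ undergoes an ordinary link-homotopy. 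You instead invoke Proposition \ref{HandB} and Theorem \ref{EST} to reduce everything to edge-slides, orientation reversals, permutations and self-crossing changes of a bouquet presentation, and then check each move; this buys a very concrete, move-by-move picture (and foreshadows the clasper arguments of Section \ref{sec6}), at the cost of an extra geometric input (realizing arbitrary $GL(g_i,\mathbb{Z})$ changes by edge-slides) and of a more delicate analysis of self-crossing changes. Two remarks: your band-sum trick for a crossing change between two \emph{different} loops $e^i_{j_0}$, $e^i_{j_1}$ of the same component is correct but unnecessary --- each sublink contains at most one of the two loops, so the disk swept by the moving arc misses the sublink entirely and every $\mu_{e^1_{k_1}\cup\dots\cup e^n_{k_n}}(J)$ is unchanged on the nose, not just modulo $\Delta_I$. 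Second, the indeterminacy bookkeeping you defer to ``an induction on $|I|$ in the spirit of Lemma \ref{add}'' is precisely where the paper's written proof does its work (statements (A) and (B) above, needed both to see that $\Delta_I$ is the same before and after each move and that the local moduli $\gcd(\Delta_{L\cup K}(I),\Delta_{L\cup K'}(I))$, $\Delta_{L\cup c}(I)$ are multiples of $\Delta_I$); you have identified the right induction, but in a complete write-up it would have to be carried out, and doing so essentially reproduces the paper's argument.
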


\begin{proof}
The proof is by induction on the length $m$ of a sequence $I$.
We will show that for any sequence $I=i_1i_2 \ldots i_m$, 
the following two statements hold.
\begin{itemize}
\item[(A)] ${\Delta}_I$ is independent of the choice of a basis $\mathcal{B}$ of the first homology group $H_1(H, \mathbb{Z})$.
\item[(B)] Fix $i$ appearing in $I$ and $l$ ($1 \leq l \leq g_i$). 
\begin{itemize}
\item[(B-1)] For each $k_1, \dots , k_{i-1}, k_{i+1}, \dots , k_{n}$,  
$$ \overline{\mu}_{e_{k_1}^{1} \cup \dots \cup (-e_{l}^{i}) \cup \dots \cup e_{k_n}^{n} }(I)\equiv  - \overline{\mu}_{e_{k_1}^{1} \cup \dots \cup e_{l}^{i} \cup \dots \cup e_{k_n}^{n} }(I) \mod {\Delta}_I. $$
\item[(B-2)] For each $k_1, \dots , k_{i-1}, k_{i+1}, \dots, k_{n}$ and some $h$ ($1 \leq h \leq g_i$, $h \neq l$),
$$\overline{\mu}_{e_{k_1}^{1} \cup \dots \cup (e_{l}^{i} \sharp_b e_{h}^{i}) \cup \dots \cup e_{k_n}^{n} }(I) 
\equiv \overline{\mu}_{e_{k_1}^{1} \cup \dots \cup e_{l}^{i} \cup \dots \cup e_{n}^{n} }(I) 
+ \overline{\mu}_{e_{k_1}^{1} \cup \dots \cup e_{h}^{i} \cup \dots \cup e_{k_n}^{n} }(I) \mod {\Delta}_I,$$
where $e_{l}^{i} \sharp_b e_{h}^{i}$ is a band sum of $e_{l}^{i}$ and $e_{h}^{i}$ which represents $e_{l}^{i} + e_{h}^{i}$ in $H_1(L_{i}, \mathbb{Z})$.
\end{itemize}
\end{itemize}
For the case $m=2$, ${\Delta}_I=0$ for any choice of a basis of the first homology group.
Then, Lemma \ref{Milnor'sLemma} (2) and \ref{add} make it obvious that the statements (A)  and (B) hold.
Assume that the statements (A) and (B) hold for less than or equal to $m-1$, then we will prove it for $m$.
We consider the change of a basis and denote the indeterminacy with respect to the original basis by ${\Delta}_I$ and the new one by ${\Delta}'_I$.
First of all, we consider the statement (A) for the following three elementary changes of a basis (i), (ii) and (iii): 

\begin{itemize}
\item[(i)]  Exchanging $e^{i}_{l}$ and $e^{i}_{h}$ for some $l, h$ ($1 \leq l, h \leq g_i$  and $l \neq h$). 
\item[(ii)] Changing $e^{i}_{l}$ to $-e^{i}_{l}$ for some $l$ ($1 \leq l \leq g_i$).\
\item[(iii)] Changing $e^{i}_{l}$ to $e^{i}_{l}+e^{i}_{h}$ for some $l, h$ ($1 \leq l, h \leq g_i$  and $l \neq h$). 
\end{itemize}

We consider the case (i). 
Then it is obvious that ${\Delta}_I={\Delta}'_I$.
We consider the case (ii) and (iii). 
For any sequence $I$ of length $m$, we consider its subsequence $J$ of length less than or equal to $m-1$.
If $J$ contains $i$, by the statements (A) and (B) for less than or equal to $m-1$, ${\Delta}_J ={\Delta}'_J$ and then  
$$   \overline{\mu}_{e_{k_1}^{1} \cup \dots \cup (-e_{l}^{i}) \cup \dots \cup e_{k_n}^{n} }(J) \equiv -\overline{\mu}_{e_{k_1}^{1} \cup \dots \cup e_{l}^{i} \cup \dots \cup e_{k_n}^{n} }(J) \mod {\Delta}_J $$
and 
$$\overline{\mu}_{e_{k_1}^{1} \cup \dots \cup (e_{l}^{i} \sharp_b e_{h}^{i}) \cup \dots \cup e_{k_n}^{n} }(J) 
\equiv \overline{\mu}_{e_{k_1}^{1} \cup \dots \cup e_{l}^{i} \cup \dots \cup e_{k_n}^{n} }(J) 
+ \overline{\mu}_{e_{k_1}^{1} \cup \dots \cup e_{h}^{i} \cup \dots \cup e_{k_n}^{n} }(J) \mod {\Delta}_J,$$
for any $k_1, \dots, k_{i-1}, k_{i+1}, \dots, k_n$.
Thus, $\overline{\mu}_{e_{k_1}^{1} \cup \dots \cup (-e_{l}^{i}) \cup \dots \cup e_{k_n}^{n} }(J)$ and 
$\overline{\mu}_{e_{k_1}^{1} \cup \dots \cup (e_{l}^{i} \sharp_b e_{h}^{i}) \cup \dots \cup e_{k_n}^{n} }(J)$ can be divided by ${\Delta}_I$, so ${\Delta'}_I$ can be divided by ${\Delta}_I$.
Similarly $\overline{\mu}_{e_{k_1}^{1} \cup \dots \cup e_{l}^{i} \cup \dots \cup e_{k_n}^{n} }(J)$ and $\overline{\mu}_{e_{k_1}^{1} \cup \dots \cup e_{h}^{i} \cup \dots \cup e_{k_n}^{n} }(J)$ can be divided by ${\Delta}'_I$.
Therefore ${\Delta}_I={\Delta}'_I$ and the statement (A) holds for $I$.
We then obtain the formulas (B-1) and (B-2) for $I$, by Lemma \ref{Milnor'sLemma} (2) and \ref{add}, respectively.
\par
We denote the original basis by $\mathcal{B}$ and the new one by $\mathcal{B}'$, which is obtained by an elementary change (i), (ii) or (iii) of $\mathcal{B}$.
By the statements (B-1) and (B-2), we obtain that 
$$(E, \dots , A_i, \dots, E) \cdot t_{H,\mathcal{B}}(I) = t_{H,\mathcal{B}'}(I), $$ 
where $E$ is the identity matrix and $A_i$ is an elementary matrix which is obtained from the identity matrix by the swapping row $l$ and row $h$ for (i), by changing 1 in the $l$-th position to $-1$  for (ii) and by adding 1 in the $(l,h)$ position for (iii), respectively. 
It is obvious that $t_H(I)$ is HL-homotopy invariant and the proof is complete.
\end{proof}

\begin{corollary}\label{separable2}
An n-component handlebody-link $H$ is HL-homotopic to the trivial handlebody-link if and only if 
$t_{H}(I)=0$ for any $I$. 
\end{corollary}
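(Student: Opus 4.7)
The plan is to prove each direction separately; the forward implication is immediate from Proposition \ref{general invariant}, while the converse is the substantive part and reduces to the clasper calculus of Section 6.

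For the ``only if'' direction, pick a bouquet graph presentation of the trivial handlebody-link whose loops bound disjoint planar disks. With respect to such a basis $\mathcal{B}$, every link $e_{k_1}^1 \cup \cdots \cup e_{k_n}^n$ is a split union of trivial knots, so all of its Milnor $\overline{\mu}$-invariants vanish. Hence $t_{H,\mathcal{B}}(I) = 0$ for every sequence $I$, and consequently $t_H(I) = 0$. Since Proposition \ref{general invariant} ensures that $t_H(I)$ is an HL-homotopy invariant, the same holds for every handlebody-link HL-homotopic to the trivial one.

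For the ``if'' direction, the first step is to observe that the $\rho$-orbit of the zero tensor is just $\{0\}$, so the hypothesis $t_H(I)=0$ actually forces $t_{H,\mathcal{B}}(I)=0$ for every basis $\mathcal{B}$ of $H_1(H;\mathbb{Z})$. This is equivalent to the statement that $\overline{\mu}_{e_{k_1}^1 \cup \cdots \cup e_{k_n}^n}(I) \equiv 0 \pmod{\Delta_I}$ for every tuple $(k_1,\ldots,k_n)$ and every sequence $I$ with distinct indices in $\{1,\ldots,n\}$. Using the classical fact (see \cite{Mil01, Mil02, HL}) that a link with all vanishing Milnor link-homotopy invariants is link-homotopic to the trivial link, every basis sublink of $H$ is thus link-homotopically trivial.

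The heart of the proof, and the place I expect the main obstacle, is upgrading these independent sublink link-homotopies to a single HL-homotopy of $H$. My plan is to fix a bouquet graph presentation $\Gamma$ of $H$ via Proposition \ref{HandB} and invoke the clasper machinery developed in Section 6. One puts $\Gamma$ into a normal form in which all inter-component interactions are carried by tree-claspers, and the vanishing of every Milnor invariant on every basis sublink is exactly what allows these claspers to be cancelled modulo HL-homotopy. The subtle point is coherence: distinct basis sublinks share loop edges, so one cannot naively perform the sublink link-homotopies one after another without disturbing the others. The clasper surgery approach bypasses this by producing a single global HL-homotopy that simultaneously trivializes all of the relevant claspers, thereby reducing $\Gamma$, and hence $H$, to the trivial handlebody-link.
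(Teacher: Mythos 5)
Your overall route coincides with the paper's: the easy direction is Proposition \ref{general invariant} plus the vanishing of all $\overline{\mu}$-invariants of the basis links of a planar bouquet presentation, and the converse is attacked by fixing a bouquet graph presentation (Proposition \ref{HandB}) and running clasper calculus. Your preliminary reductions are also fine, with one small caveat: from $t_H(I)=0$ you only get $\mu_{e^1_{k_1}\cup\dots\cup e^n_{k_n}}(I)\equiv 0$ modulo $\Delta_I$, and since $\Delta_I$ only divides each link's own indeterminacy this is a priori weaker than the vanishing of the links' $\overline{\mu}$-invariants; one needs the short induction on $|I|$ (for $|I|=2$ all $\Delta_I=0$, and then $\Delta_I$, being a gcd of shorter $\mu$'s, vanishes at every stage) to conclude that every $\mu_{e^1_{k_1}\cup\dots\cup e^n_{k_n}}(I)$ vanishes as an integer.

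The genuine gap is in the second half: the sentence asserting that the vanishing of the Milnor invariants ``is exactly what allows these claspers to be cancelled modulo HL-homotopy'' is the entire content of the converse, and your sketch supplies neither the mechanism nor the reason why invariants of length at most $n$ (the only sequences with distinct indices that exist) suffice. The paper's argument, given in Lemma \ref{HBLtri} and completed in the proof of the corollary, is an induction on clasper degree: write $\Gamma=(\Gamma_0)_{T_1}$ for a disjoint union of $C_1$-trees on the planar graph $\Gamma_0$; at stage $k$, the integer vanishing of the length-$(k+1)$ invariants, fed through the correspondence of Lemma \ref{MIlnorproperty} and the moves of Lemmas \ref{edgecrossing}, \ref{sliding}, \ref{inverse} and \ref{IHX}, cancels all degree-$k$ trees whose leaves meet $k+1$ distinct components, at the cost of introducing trees of degree $\geq k+1$, while trees with two leaves on one component die by Lemma \ref{multitree}; and the process terminates because a tree of degree $\geq n$ has at least $n+1$ leaves, hence two leaves on the same component of the $n$-component graph, so it too vanishes up to self-crossing changes by Lemma \ref{multitree}. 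This pigeonhole termination is precisely why full vanishing of $t_H(I)$ implies triviality, and it is absent from your plan. Finally, your intermediate step that each basis sublink is link-homotopically trivial (via the classical characterization) is true but is not what the argument uses: as you yourself observe, those sublink homotopies cannot be performed independently because the sublinks share loop edges, and the clasper induction instead consumes the numerical vanishing directly before converting the resulting component homotopy of bouquet presentations into an HL-homotopy via Proposition \ref{HandB}.
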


\begin{proof}
We use claspers. See Section \ref{sec6}.
\end{proof}

\begin{remark} 
We can say that $t_H(I)$ is identified with the following definition essentially, by a property of Milnor's invarinats.
For any sequence $I=i_1 i_2 \ldots i_m$ ($m \leq n$) with distinct indices in $\{1,2, \dots, n\}$,   
 \[ t_H(I) := \sum_{{k_{1}}, \dots ,{k_{m}}=1}^{g_{i_1},  \dots, g_{i_m}}  {\mu}_{e_{k_{1}}^{i_1} \cup  \dots \cup e_{k_{m}}^{i_m} }(I)
  \ \overline{\boldsymbol{e}}_{k_{1}}^{i_1} \otimes \dots \otimes \overline{\boldsymbol{e}}_{k_{m}}^{i_m} \in {(\mathbb{Z}_{{\Delta}_I})}^{g_{i_1}} \otimes  \dots \otimes {(\mathbb{Z}_{{\Delta}_I})}^{g_{i_m}}.\]
\end{remark}

\begin{example}
Let $H$ be a handlebody-link as illustrated in Figure~\ref{exm3}. Let $I=123$.
Then, ${\Delta}_I=2$ and 
$$t_H(I)=1 \ \overline{\boldsymbol{e}}_{{1}}^{1} \otimes  \overline{\boldsymbol{e}}_{{1}}^{2} \otimes \overline{\boldsymbol{e}}_{{1}}^{3}
+ 1 \ \overline{\boldsymbol{e}}_{{2}}^{1} \otimes  \overline{\boldsymbol{e}}_{{2}}^{2} \otimes \overline{\boldsymbol{e}}_{{2}}^{3}
 \in {(\mathbb{Z}_{2})}^{2} \otimes {(\mathbb{Z}_{2})}^{2} \otimes  {(\mathbb{Z}_{2})}^{2}.$$ 
\end{example}
\begin{figure}[ht]
\raisebox{-15 pt}{\begin{overpic}[bb=0 0 189 111, width=160
pt]{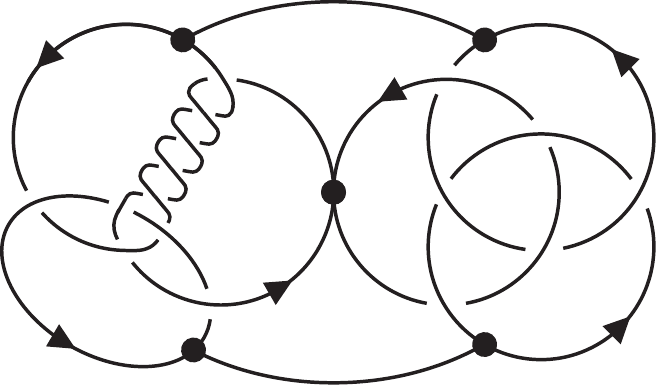}
\put(0,85){$e^1_1$}\put(154,83){$e^1_2$}
\put(58,30){$e^2_1$}\put(84,77){$e^2_2$}
\put(0,0){$e^3_1$}\put(152,3){$e^3_2$}
\end{overpic}}
\caption{Handlebody-link $H$.} \label{exm3}
\end{figure}

\begin{remark}
T.~Fleming defined a numerical invariant $\lambda_{\Phi}(H)$ of a pair of a spatial graph $\Phi$ and its subgraph $H$ under component homotopy in \cite{F}.
Now, we define $\Phi$ as a handlebody-link instead of a spatial graph and $H$ as its component instead of a subgraph.
We then can naturally extend this invariant to a pair of a handlebody-link and its component under HL-homotopy.
Then, the value of $\lambda_{\Phi}(H)$ is the length of first non-vanishing for $t_\Phi(I)$ such that $I$ contains the component number of $H$.
\end{remark}

\subsection{Almost trivial case}
An $n$-component handlebody-link $H$ is called \textit{almost trivial} if its any $(n-1)$-component subhandlebody-link is HL-homotopic to the trivial handlebody-link. From Corollary \ref{separable2}, it is hold that $H$ is almost trivial if and only if $t_H(I)=0$ for any $I$ whose length is less than $n$. We give a bijection between the set of HL-homotopy classes of almost trivial handlebody-links and a union of direct sum of tensor product spaces modulo the action $\rho$.
\par 
Let $\overline{W}_n$ be the set of HL-homotopy classes of $n$-component almost trivial handlebody-links. 
\par
Let $\overline{X}_{m_1, \dots, m_n}$ be $({\mathbb{Z}}^{m_{1}} \otimes \dots \otimes {\mathbb{Z}}^{m_{n}})^{\oplus (n-2)!}$ modulo the diagonal action of $\rho$ in Subsection \ref{mapdef}, 
which acts $\rho$ to every $(n-2)!$ tensor product spaces ${\mathbb{Z}}^{m_{1}} \otimes \dots \otimes {\mathbb{Z}}^{m_{n}}$ at the same time. Then we put 
$$\overline{X}_n = \coprod_{m_1, \dots, m_n \in \mathbb{N}} \overline{X}_{m_1, \dots, m_n}.$$
There is a bijection between $\overline{W}_n$ and $\overline{X}_n$. We consider the lexicographic order on permutations of $\{2, 3,  \dots, n-1\}$.

\begin{theorem} \label{thm2}
 Let $\sigma_p$ $(1\leq p \leq (n-2)!)$ be the $p$-th permutation of $\{2, 3, \dots, n-1\}$ and put $I_p=1\sigma_p(2)\dots\sigma_p(n-1)n$. The following map is well-defined and bijective:
$$\varphi: \overline{W}_n\rightarrow \overline{X}_n; H\mapsto (t_H({I_1}), \dots, t_H({I_{(n-2)!}})).$$
Note that, let $\overline{W}_{g_1, \dots, g_n} \subset \overline{W}_n$ be the set of HL-homotopy classes of $n$-component almost trivial handlebody-links whose genera are $g_1, \dots, g_n$, then the restriction of $\varphi$ to $\overline{W}_{g_1, \dots, g_n}$ is a bijection
$$\varphi|_{\overline{W}_{g_1, \dots, g_n}}: \overline{W}_{g_1, \dots, g_n} \rightarrow \overline{X}_{g_1, \dots, g_n}.$$
\end{theorem}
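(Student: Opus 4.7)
The plan is to verify well-definedness of $\varphi$ first, and then obtain bijectivity by combining Milnor's classification of links with vanishing lower invariants with the clasper-theoretic realization argument deferred to Section \ref{sec6}.

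For well-definedness, observe that if $H$ is almost trivial then Corollary \ref{separable2} applied inductively to its proper sub-handlebody-links forces $\mu_{e_{k_1}^{1} \cup \cdots \cup e_{k_n}^{n}}(J) = 0$ as a genuine integer for every proper subsequence $J$ of length less than $n$ and every choice of loops. Hence the indeterminacy $\Delta_{I_p}$, which is a gcd of such numbers, vanishes, and each $t_H(I_p)$ lands in the unreduced $\mathbb{Z}^{g_1} \otimes \cdots \otimes \mathbb{Z}^{g_n}$. By the proof of Proposition \ref{general invariant}, any change of basis $\mathcal{B} \to \mathcal{B}'$ of $H_1(H;\mathbb{Z})$ acts on $t_{H,\mathcal{B}}(I_p)$ through a single element of $GL(g_1,\mathbb{Z}) \times \cdots \times GL(g_n,\mathbb{Z})$ whose matrices depend only on the change of basis and not on $I_p$; this is exactly the diagonal $\rho$-action defining $\overline{X}_n$, so $\varphi$ descends to the target and restricts correctly to $\overline{X}_{g_1,\ldots,g_n}$.

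For injectivity, I would work at the level of bouquet graph presentations via Proposition \ref{HandB}. Fixing a basis realises $H$ as a family of ordered $n$-component links $L_{k_1, \ldots, k_n} = e_{k_1}^{1} \cup \cdots \cup e_{k_n}^{n}$ whose Milnor invariants of length below $n$ all vanish, placing these in the scope of Milnor's classical classification for almost-Brunnian links: the link-homotopy class is captured by the $\mu(I)$ with $|I|=n$, and cyclic invariance (Lemma \ref{Milnor'sLemma}(1)) together with the Milnor shuffle relations cut the $n!$ possible sequences down to the $(n-2)!$ canonical representatives $I_p$. Thus $\varphi(H)=\varphi(H')$ forces link-homotopy of each paired $L_{k_1,\ldots,k_n}$, and the clasper calculus of Section \ref{sec6} promotes these coordinate-wise link-homotopies to a genuine HL-homotopy of the ambient bouquet graphs.

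For surjectivity, given $(T_1, \ldots, T_{(n-2)!})$, write each $T_p$ as a $\mathbb{Z}$-combination of basis tensors $\overline{\boldsymbol{e}}_{k_1}^{1} \otimes \cdots \otimes \overline{\boldsymbol{e}}_{k_n}^{n}$ and realise each summand by surgery on the trivial handlebody-link along a tree clasper of degree $n-1$ whose leaves grasp the loops $e_{k_i}^{i}$ in the cyclic order dictated by $I_p$, with multiplicity equal to the coefficient. Additivity from Lemma \ref{add} together with Habiro-style clasper calculus ensure that the stacked surgery yields an almost trivial handlebody-link with the prescribed tuple under $\varphi$. The main obstacle will be precisely this clasper-theoretic input in Section \ref{sec6}: showing that vanishing of the tensor invariants of length $n$ upgrades Corollary \ref{separable2} to HL-triviality, and that any element of $\overline{X}_{g_1,\ldots,g_n}$ is hit by a controlled degree-$(n-1)$ clasper surgery. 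Keeping the $(n-2)!$ cyclic/shuffle reduction compatible with the basis-change action $\rho$ throughout the bookkeeping is the secondary delicate point, though once the indeterminacies are known to vanish it follows from Lemmas \ref{Milnor'sLemma} and \ref{add}.
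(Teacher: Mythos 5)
Your well-definedness argument (vanishing indeterminacies by induction from length $2$, plus the fact that a basis change acts through a single element of $GL(g_1,\mathbb{Z})\times\cdots\times GL(g_n,\mathbb{Z})$ independent of $I_p$) and your surjectivity construction (attach parallel $C_{n-1}$-trees to a plane bouquet graph with multiplicities given by the coefficients, then compute the invariants by Lemmas \ref{MIlnorproperty} and \ref{additivity}) do match the structure of the paper's proof. The genuine gap is in injectivity. You argue: the constituent links $e^1_{k_1}\cup\cdots\cup e^n_{k_n}$ of $H$ and $H'$ have equal length-$n$ Milnor numbers, hence are pairwise link-homotopic by Milnor's classification of almost trivial links, and then ``the clasper calculus of Section \ref{sec6} promotes these coordinate-wise link-homotopies to a genuine HL-homotopy.'' That promotion is precisely the content of the theorem and does not follow from the pairwise statement: the constituent links share loops, the individual link-homotopies need not be compatible, and an HL-homotopy must move all $g_1\cdots$ loops of each handlebody simultaneously (it also permits crossing changes between \emph{different} loops of the same component, which is invisible at the level of a single constituent link). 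Moreover, your remark that the remaining clasper input is ``showing that vanishing of the tensor invariants of length $n$ upgrades Corollary \ref{separable2} to HL-triviality'' implicitly treats injectivity as a kernel statement; since $\overline{W}_n$ carries no group structure, injectivity cannot be reduced to the zero-tensor case.

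What the paper actually does instead is a normal-form argument: Lemma \ref{HBLtri} shows an almost trivial bouquet presentation is HL-homotopic to the plane graph $\Gamma_0$ with simple $C_{n-1}$-trees attached; the moves of Figure \ref{MoCnCla} (derived from Lemmas \ref{edgecrossing}--\ref{multitree}, with the IHX relation reducing arbitrary trees to the $(n-2)!$ types $C^{(p)}_{n-1}$) bring this to a canonical form in which the number of parallel $C^{(p)}_{n-1}$-trees joining $f^1_{k_1},\dots,f^n_{k_n}$ equals $\overline{\mu}_{e^1_{k_1}\cup\cdots\cup e^n_{k_n}}(I_p)$; and the elementary $GL$-transformations (i)--(iii) are realized geometrically by relabelling, orientation reversal, edge-slides and the leaf-splitting move of Lemma \ref{sepa} (Figures \ref{TorCla}, \ref{ReTraCla}). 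With this the paper defines an explicit inverse $\psi:\overline{X}_n\to\overline{W}_n$, whose well-definedness under the diagonal $\rho$-action is exactly the step your sketch leaves as ``secondary bookkeeping'' but which carries the injectivity. Also note a small misdirection in your reduction to $(n-2)!$ sequences: in the paper this is accomplished by the IHX and cyclic-symmetry manipulations of claspers (Lemma \ref{IHX}), not by shuffle relations among Milnor invariants, and the stacking additivity used is Lemma \ref{additivity} for string links rather than the band-sum Lemma \ref{add}. Without the canonical-form step (or an equivalent argument), the injectivity half of your proposal does not go through.
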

\begin{proof}
See Section \ref{sec6}.
\end{proof}

\subsection{HL-homotopy invariant for handlebody-link} \label{invs}

\par
From Remark \ref{THid}, $t_{H,\mathcal{B}}(I)$ corresponds to a hypermatix $M_{H,\mathcal{B}}(I)= (\overline{\mu}_{e_{{k_1}}^{1} \cup  \dots \cup e_{{k_n}}^{n} })_{{k_1}, \dots ,{k_n}=1}^{g_{1}, \dots, g_{n}}$ in $M(g_1, \dots, g_n; \mathbb{Z}_\delta)$. Then
we can describe Theorem \ref{thm2} by using hypermatrices. 
Let $\overline{HM}_n$ be the set of $(n-2)!$-tuples of $n$-hypermatrices $(M_1, \dots, M_{(n-2)!})$ whose entries are in $\mathbb{Z}$, where every $M_i$'s are the same size, modulo the elementary transformations which are applied at the same time for every $M_i$. When $n=3$, $\overline{HM}_3$ is the set of 3-hypermatrices up to the elementary transformations. Then, there is a bijection between $\overline{W}_n$ and $\overline{HM}_n$: 
$$\varphi: \overline{W}_n\rightarrow \overline{HM}_n; H\mapsto (M_H({I_1}), \dots, M_H({I_{(n-2)!}})),$$
where $(M_H({I_1}), \dots, M_H({I_{(n-2)!}}))$ is the equivalence class of $(M_{H,\mathcal{B}}(I_1), \dots, M_{H,\mathcal{B}}({I_{(n-2)!}}))$ in $\overline{HM}_n$.

\par
For any hypermatrix $A \in M(m_1, \dots ,m_d)$, 
there exist $r \in \mathbb{N}$ and $\mbox{\boldmath $a$}_{ij} \in \mathbb{Z}^{m_i}$ ($i=1, \dots,  d$) such that
$$ A=\sum_{j=1}^{r}  \mbox{\boldmath $a$}_{1j} \otimes \mbox{\boldmath $a$}_{2j} \otimes \dots \otimes \mbox{\boldmath $a$}_{d j}.$$
The {\it tensor rank} of a hypermatrix $A$ is defined to be the minimum number of $r$ and denoted by ${\rm rank_t}(A)$. 
A hypermatrix has tensor rank zero if and only if it is zero.

The {\it $k$-th flattening map} on $M({m_1, \dots , m_d})$ is the map $f_k$ from a hypermatrix to a usual matrix as follows. The map
$$f_k : M({m_1, \dots , m_d}) \to M({m_k, m_1 \times \dots \times \hat{m}_k \times \dots \times m_d})$$
is defined by 
$$ (f_k(A))_{ij}= (A)_{s_k(i,j)},$$
where $s_k(i,j)$ is the $j$-th element in lexicographic order in the subset of $\langle m_1 \rangle \times \dots \times \langle m_d \rangle$ consisting of elements such that the $k$-th coordinate is the index $i$, and a caret means that the respective entry is omitted.
For example, given 

\[
  A = \left(
    \begin{array}{ccc}
      a_{111} & a_{121}  & a_{131} \\
      a_{211} & a_{221} &  a_{231} \\
      a_{311} & a_{321} &  a_{331} \\
      a_{411} & a_{421} &  a_{431} \\
    \end{array}
  \right| 
  \left.
\begin{array}{ccc}
      a_{112} & a_{122}  & a_{132} \\
      a_{212} & a_{222} &  a_{232} \\
      a_{312} & a_{322} &  a_{332} \\
      a_{412} & a_{422} &  a_{432} \\    
      \end{array}
  \right) \in M(4, 3, 2),
  \]

we obtain that

\[
  f_1(A) = \left(
    \begin{array}{cccccc}
      a_{111} & a_{112} & a_{121}  & a_{122} &  a_{131} & a_{132} \\
      a_{211} & a_{212} & a_{221} &  a_{222} & a_{231} &  a_{232} \\
      a_{311} & a_{312} & a_{321} &  a_{322} & a_{331} &  a_{332} \\
      a_{411} & a_{412} & a_{421} &  a_{422} & a_{431} & a_{432} \\ 
    \end{array}
  \right) \in M(4,6),
\]

\[
  f_2(A) = \left(
    \begin{array}{cccccccc}
      a_{111} & a_{112} & a_{211}  & a_{212} &  a_{311} & a_{312} &a_{411} & a_{412} \\
      a_{121} & a_{122} & a_{221} &  a_{222} & a_{321} &  a_{322} & a_{421} &  a_{422} \\
      a_{131} & a_{132} & a_{231} &  a_{232} & a_{331} &  a_{332} & a_{431} & a_{432} \\
    \end{array}
  \right) \in M(3,8),\]

\[
  f_3(A) = \left(
    \begin{array}{cccccccccccc}
      a_{111} & a_{121} & a_{131}  & a_{211} &  a_{221} & a_{231} &a_{311} & a_{321} & a_{331}  & a_{411} & a_{421} & a_{431} \\
      a_{112} & a_{122} & a_{132} &  a_{212} & a_{222} &  a_{232} & a_{312} &  a_{322} &  a_{332} & a_{412} & a_{422} & a_{432} \\
    \end{array}
  \right) \in M(2,12).
\]

Let $A$ be a hypermatrix in $M(m_1, \dots , m_d)$.
The {\it multilinear rank} of $A$ is defined as a sequence $( {\rm rank} (f_k(A)) )_{k=1,2, \dots, d}$ and denoted by ${\rm rank_m}(A)$. 
An {\it elementary divisor of} $A$ {\it by flattening map} is defined as a sequence of the multiset of elementary devisors of matrices $f_k(A)$'s for $k=1,2, \dots, d$ and denoted by ${\rm ed}(A)$.

The {\it combinatorial hyperdeterminant} of a cubical $d$-hypermatrix $A=(a_{i_1i_2 \ldots i_d}) \in M(m, \dots ,m)$ is defined as 
$$ {\rm det}(A)= \dfrac1{m!} \sum_{\sigma_1, \dots, \sigma_d \in S_m} {\rm sgn} (\sigma_1) \dots {\rm sgn}  (\sigma_d) \prod_{i=1}^{m} a_{\sigma_1(i) \ldots \sigma_d(i)},$$
where $S_m$ is a permutation group.
It is known that for an odd dimension $d$ ($>1$), the combinatorial hyperdeterminant of a cubical $d$-hypermatrix is identically zero.

We give a corollary for Theorem~\ref{thm2} by using invariants of hypermatrices under the transformations (TI), (TII) and (TIII) in Section \ref{sec4}.   

\begin{corollary}\label{rank}
For any handlebody-link $H$, the tensor rank of $M_H(I)$, and the multilinear rank and elementary devisor of $M_H(I)$ by the flattening map are invariant under HL-homotopy.
Moreovere, if the components of $H$ have the same genus, the absolute value of combinatorial hyperdeterminant of $M_H(I)$ is invariant under HL-homotopy.
\end{corollary}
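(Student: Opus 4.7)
The plan is to reduce everything to a verification on hypermatrix representatives under the action $\rho$. By Proposition~\ref{general invariant}, $M_H(I)$ is well defined as the class of a representative $M_{H,\mathcal{B}}(I)$ modulo the action $\rho$ of $GL(g_{i_1}, \mathbb{Z}) \times \cdots \times GL(g_{i_m}, \mathbb{Z})$, which is generated by the elementary transformations (TI), (TII), (TIII) applied coordinatewise. Since $M_H(I)$ is itself an HL-homotopy invariant, it suffices to check that each of the four quantities is invariant under $\rho$ on a representative hypermatrix.

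First I would handle the tensor rank: given any rank-one decomposition $A = \sum_{j=1}^{r} \mathbf{a}_{1j} \otimes \cdots \otimes \mathbf{a}_{dj}$, one computes
\[
(A_1, \ldots, A_d) \cdot A = \sum_{j=1}^{r} (A_1 \mathbf{a}_{1j}) \otimes \cdots \otimes (A_d \mathbf{a}_{dj}),
\]
which is a decomposition of the transformed hypermatrix into the same number of rank-one summands. The reverse inequality follows from applying the inverse action, so ${\rm rank_t}$ is $\rho$-invariant.

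Next, for the multilinear rank and the flattening elementary divisors, I would track how each flattening $f_k$ behaves under $\rho$. A direct calculation should give $f_k\bigl((A_1, \ldots, A_d) \cdot A\bigr) = A_k \, f_k(A) \, Q_k$, where $Q_k$ is, up to a permutation matrix coming from the lexicographic ordering used to define $f_k$, the Kronecker product of the $A_i$'s for $i \neq k$. Since $A_k \in GL(m_k, \mathbb{Z})$ and $Q_k \in GL(\prod_{i \neq k} m_i, \mathbb{Z})$, left and right multiplication by them preserves both the rank and the Smith normal form; hence $\text{rank}\, f_k(A)$ and the multiset of elementary divisors of $f_k(A)$ descend to invariants of $M_H(I)$.

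Finally, for the combinatorial hyperdeterminant in the cubical case (same genus), I would use that ${\rm det}$ is multilinear along each axis: acting by $A_k \in GL(m, \mathbb{Z})$ on the $k$-th axis multiplies ${\rm det}(A)$ by ${\rm det}(A_k) = \pm 1$, so $|{\rm det}(A)|$ is preserved by each single-axis generator of $\rho$, and therefore by the whole action. The main obstacle is the explicit identification of $Q_k$ and the verification that it lies in $GL$ over $\mathbb{Z}$ in the flattening step; this amounts to a bookkeeping exercise with the lexicographic ordering, and once completed the remaining arguments are formal.
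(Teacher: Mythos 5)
Your proposal is correct and follows essentially the same route as the paper: the paper's proof simply asserts that tensor rank, the ranks and elementary divisors of the flattenings, and (up to sign) the combinatorial hyperdeterminant are unchanged by the elementary transformations (TI)--(TIII), i.e.\ by the coordinatewise $GL(\cdot,\mathbb{Z})$ action, which is exactly what you verify, only with the routine details (rank-one decompositions, the formula $f_k((A_1,\dots,A_d)\cdot A)=A_k f_k(A) Q_k$ with $Q_k$ an invertible integer matrix) spelled out. The only small refinement worth noting is that for the hyperdeterminant step you need alternating multilinearity in the axis-$k$ slices (or a direct check on the three generators), not multilinearity alone, to conclude that acting by $A_k$ scales $\det$ by $\det(A_k)=\pm1$.
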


\begin{proof}
The tensor rank of a hypermatrix, and each multilinear rank and elementary devisor of matrix derived from a hypermatrix by the flattening map do not change under elementary transformations (TI), (TII) and (TIII) for hypermatrices.
Moreover, the combinatorial hyperdeterminant of a hypermatrix does not change under elementary transformations (TI) and (TII) for hypermatrices, and changes under elementary transformation (TIII) only sign.
\end{proof}

\subsection{Examples}

\begin{example}
Let $H_1$ and $H_2$ be two handlebody-links depicted in Figure
\ref{exm01}. Then,
$$M_{H_1}(I)=\left(\!\begin{array}{ccc|ccc} 1 & 1 & 1 & 2 & 2 & 2 \\ 0 &
0 & 0 & 0 & 0 & 0\end{array}\!\right), \hspace{0.5cm}
M_{H_2}(I)=\left(\!\begin{array}{ccc|ccc} 1 & 1 & 0 & 1 & 1 & 0 \\ 1 & 1
& 0 & 1 & 1 & 0\end{array}\!\right),$$
where $I=123$. Here, $M_{H_1}(I)$ is transformed to $M_{H_2}(I)$ by elementary transformations. Therefore $H_1$ and $H_2$ are
HL-homotopic.
\begin{figure}[ht]
$$
H_1: \hspace{0.2cm}
\raisebox{-49 pt}{\begin{overpic}[bb=0 0 112 99, width=144
pt]{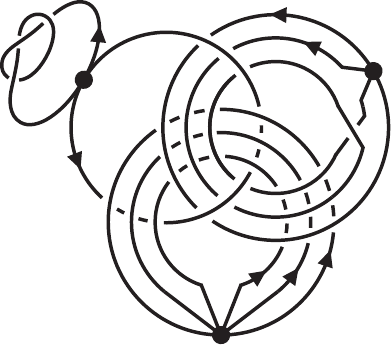}
\put(14, 66){$e^1_1$}\put(23,111){$e^1_2$}
\put(125,28){$e^2_1$} \put(122,10){$e^2_2$}
\put(111,23){\line(3,-2){10}} \put(82,28){$e^2_3$}
\put(100,129){$e^3_1$}\put(106,85){$e^3_2$}
\put(111,95){\line(1,3){4}}
\end{overpic}}
\hspace{0.3cm},\hspace{0.8cm}
H_2: \hspace{0.2cm}
\raisebox{-73 pt}{\begin{overpic}[bb=0 0 97 116, width=125
pt]{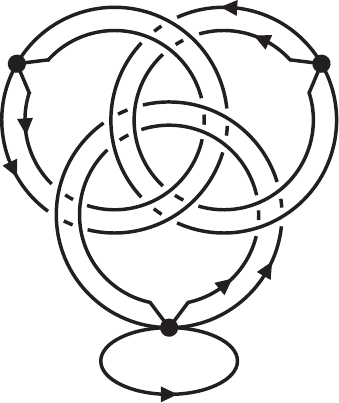}
\put(-10, 84){$e^1_1$}\put(14,102){$e^1_2$}
\put(102,42){$e^2_1$}\put(70,48){$e^2_2$} \put(58,9){$e^2_3$}
\put(82,154){$e^3_1$} \put(88,122){$e^3_2$}
\end{overpic}}
$$
\caption{Handlebody-links $H_1$ and $H_2$.} \label{exm01}
\end{figure}
\end{example}

\begin{example} Let $H_3$ and $H_4$ be two handlebody-links depicted
in Figure \ref{exm02}. Then, 
$$M_{H_3}(I)=\left(\!\begin{array}{ccc|ccc} 1 & 1 & 0 & 0 & 0 & 1 \\ 1 &
1 & 0 & 0 & 0 & 1\end{array}\!\right), \hspace{0.5cm}
M_{H_4}(I)=\left(\!\begin{array}{ccc|ccc} 2 & 0 & 0 & 0 & 1 & 0 \\ 2 & 0
& 0 & 0 & 1 & 0\end{array}\!\right),$$
where $I=123$.
$$M_{H_3}(I)=\left(\!\begin{array}{c} 1\\ 1\end{array}\!\right) \otimes
\left(\!\begin{array}{c} 1\\ 1\\ 0\end{array}\!\right) \otimes
\left(\!\begin{array}{c} 1\\ 0\end{array}\!\right)
+\left(\!\begin{array}{c} 1\\ 1\end{array}\!\right) \otimes
\left(\!\begin{array}{c} 0\\ 0\\ 1\end{array}\!\right) \otimes
\left(\!\begin{array}{c} 0\\ 1\end{array}\!\right),$$
$$M_{H_4}(I)=\left(\!\begin{array}{c} 2\\ 2\end{array}\!\right) \otimes
\left(\!\begin{array}{c} 1\\ 0\\ 0\end{array}\!\right) \otimes
\left(\!\begin{array}{c} 1\\ 0\end{array}\!\right)
+\left(\!\begin{array}{c} 1\\ 1\end{array}\!\right) \otimes
\left(\!\begin{array}{c} 0\\ 1\\ 0\end{array}\!\right) \otimes
\left(\!\begin{array}{c} 0\\ 1\end{array}\!\right),$$
and the first slices of them are not integer multiples of the second
slices respectively. Thus ${\rm rank_t}(H_3)={\rm rank_t}(H_4)=2$.
$${\rm rank_m}(M_{H_3}(I))= ({\rm rank}(f_1(M_{H_3}(I))), {\rm
rank}(f_2(M_{H_3}(I))),{\rm rank}(f_3(M_{(H_3}(I))))=(1, 2, 2)$$
and 
$${\rm rank_m}(M_{H_4}(I))= ({\rm rank}(f_1(M_{H_4}(I))), {\rm
rank}(f_2(M_{H_4}(I))),{\rm rank}(f_3(M_{H_4}(I))))=(1, 2, 2).$$
On the other hands, the elementary divisors of them are
$${\rm ed}(M_{H_3}(I))= ({\rm ed}(f_1(M_{H_3}(I))), {\rm
ed}(f_2(M_{H_3}(I))),{\rm ed}(f_3(M_{H_3}(I))))=(\{1\},\{1,1\},\{1,1\}),$$
$${\rm ed}(M_{H_4}(I))= ({\rm ed}(f_1(M_{H_4}(I))), {\rm
ed}(f_2(M_{H_4}(I))),{\rm ed}(f_3(M_{H_4}(I))))=(\{1\},\{1,2\},\{1,2\}).$$
Thus ${\rm ed}(M_{H_3}(I))\neq{\rm ed}(M_{H_4}(I))$, and $H_3$ is not
HL-homotopic to $H_4$.
\begin{figure}[ht]
$$
\raisebox{-55 pt}{\begin{overpic}[bb=0 0 160 163, width=115
pt]{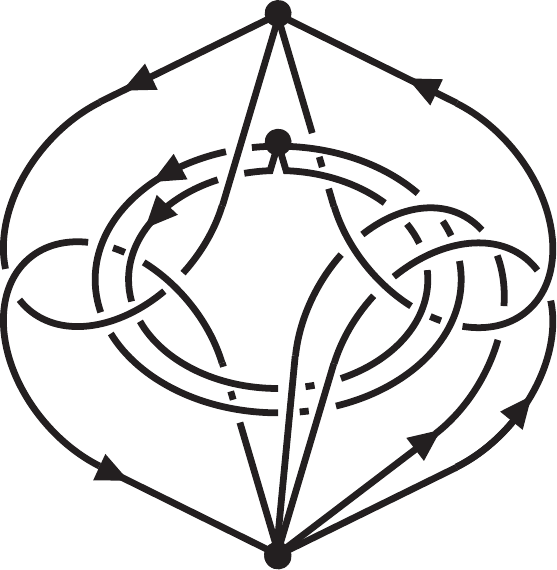}
\put(-31,56){$H_3:$} \put(54,121){3} \put(54,-10){2} \put(54, 93){1}
\put(17,105){$e^3_2$}\put(88,105){$e^3_1$}
\put(24,86){$e^1_1$}\put(50,58){$e^1_2$} \put(36,72){\line(3,-2){13}}
\put(9,10){$e^2_3$}\put(74,27){$e^2_2$} \put(108,24){$e^2_1$}
\end{overpic}}
\hspace{0.5cm},\hspace{1.5cm}
\raisebox{-73 pt}{\begin{overpic}[bb=0 0 170 189, width=122
pt]{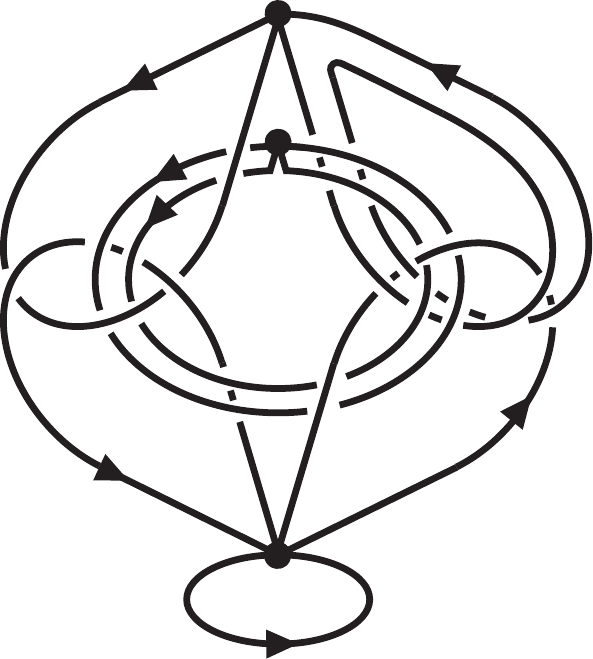}
\put(-31,74){$H_4:$} \put(54,139){3} \put(54,9){2} \put(54, 111){1}
\put(17,123){$e^3_2$}\put(91,126){$e^3_1$}
\put(24,104){$e^1_1$}\put(50,76){$e^1_2$} \put(36,90){\line(3,-2){13}}
\put(9,28){$e^2_2$}\put(54,-9){$e^2_3$} \put(108,42){$e^2_1$}
\end{overpic}}
$$
\caption{Handlebody-links $H_3$ and $H_4$.} \label{exm02}
\end{figure}
\end{example}

\begin{remark}
Similar way to Corollary~\ref{rank}, we can give comparable invariants for $t_H(I)$ of general handlebody-link $H$ by using the tensor rank.
\end{remark}

\section{Proofs of Corollary \ref{separable2} and Theorem \ref{thm2}} \label{sec6}
\par
In this section, we prove Corollary \ref{separable2} and Theorem \ref{thm2}. We use the clasper theory. 

\subsection{Review of clasper theory} \label{sec8}
The clasper theory was introduced by K.~Habiro \cite{Ha}.  
We define a tree clasper and introduce a part of its properties. 
For general definitions and properties, we refer the reader to \cite{Ha}.  

A disk $T$ embedded in $S^3$ is called a {\em tree clasper} 
for a (string) link $L$ if it satisfies the following two conditions:
\begin{enumerate} 
\item The embedded disk $T$ is decomposed into bands and disks, where each band connects two distinct disks and each disk attaches either 1 or 3 bands.
We call a band an {\it edge} and a disk attached 1 band a {\em leaf}. 
\item The embedded disk $T$ intersects the (string) link $L$ transversely so that the intersections are contained in the interiors of the leaves. 
\end{enumerate}
We call a tree clasper $T$ with $k+1$ leaves a \emph{$C_k$-tree}. 
A $C_k$-tree is {\it simple} if each leaf intersects $L$ at exactly one point.  

Given a $C_k$-tree $T$ for a (string) link $L$, there exists a procedure to construct a
framed link in a regular neighborhood of $T$. 
We call surgery along the framed link {\em surgery along} $T$. 
Because there is an orientation-preserving homeomorphism which fixes the boundary, 
from the regular neighborhood $N(T)$ of $T$ to the manifold obtained from $N(T)$ by surgery along $T$, 
we can regard the surgery along $T$ as a local move on $L$. 
We denote by $L_T$ the (string) link obtained from $L$ by surgery along $T$. 
For example, surgery along a $C_k$-tree is a local move as showed in Figure~\ref{Ck-tree}.
Similarly, let $T_1 \cup \dots \cup T_m$ be a disjoint union of tree claspers for $L$, 
then we define $L_{T_1 \cup \dots \cup T_m}$ as the (string) link obtained by surgery along $T_1 \cup \dots \cup T_m$. 

\begin{figure}[ht]
$$\raisebox{-15 pt}{\begin{overpic}[bb=0 0 217 73, width=160
pt]{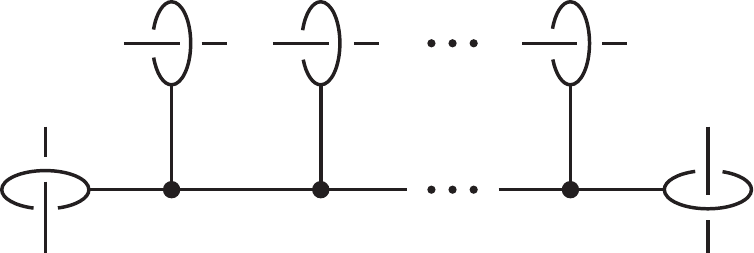}
\end{overpic}}
  \hspace{0.5cm} \mbox{\large$\xrightarrow[]{\rm surgery}$}
\hspace{0.5cm}
\raisebox{-33 pt}{\begin{overpic}[bb=0 0 229 9, width=180
pt]{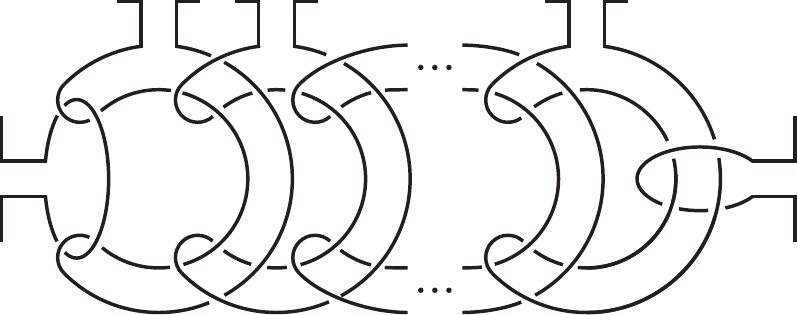}
\end{overpic}}$$
    \caption{$C_k$-tree and local move.}
    \label{Ck-tree}
\end{figure}

The $C_k$-equivalence is an equivalence relation on (string) links generated by surgeries along $C_k$-trees and ambient isotopy.  
By the definition of a $C_k$-tree, it is easy to see that a $C_k$-equivalence implies a $C_{k-1}$-equivalence.
The set of $C_k$-equivalence classes of string links forms a group under the composition.
It is known that a $C_1$-tree corresponds to the crossing change, a $C_2$-tree corresponds to the delta move  in \cite{Mat, MuNa} and a $C_3$-tree corresponds to the  clasp-pass move in \cite{Hab}.

Let $\pi$ be a bijection of $ \{ 1, 2, \dots , n\}$ such that $\pi(1)=1 $ and $\pi(n)=n $,
and let $\Pi$ be the set of such bijections. 
Consider an element of ${\Pi}$ as a sequence.
For any $\pi \in {\Pi}$, let $T_\pi$ and $T_\pi^{-1}$ be simple $C_{n-1}$-trees as illustrated in Figure \ref{C_n-trees},
which are the images of homeomorphisms from the neighborhoods of $T_\pi$ and $T_\pi^{-1}$ to the 3-balls.
Here, $\oplus$ means a positive half-twist.

\begin{figure}[ht]
$$\raisebox{33 pt}{\begin{overpic}[bb=0 0 577 131, 
width=400pt]{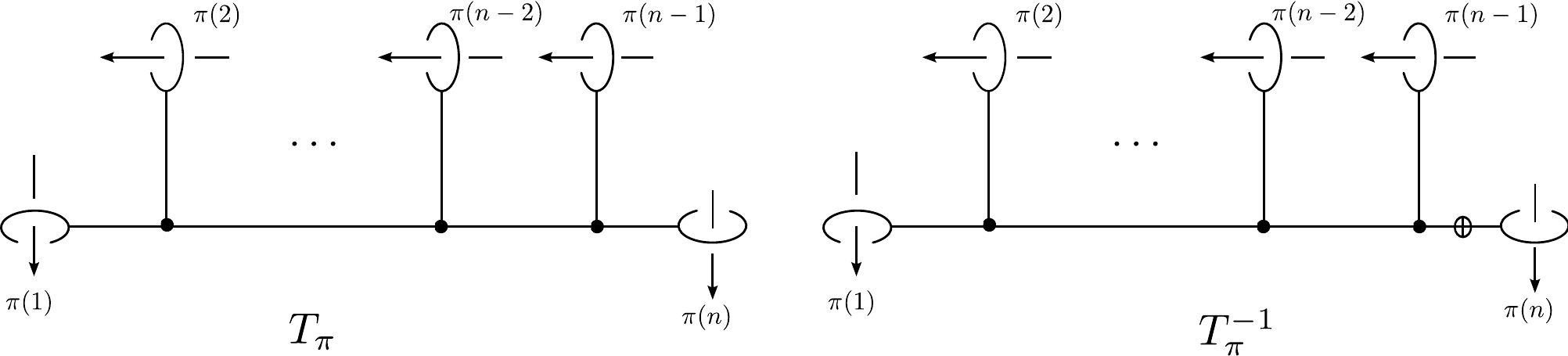}\end{overpic}}$$
\vspace{-1.8cm}
\caption{$C_{n-1}$-trees $T_\pi$ and $T_\pi^{-1}$.} \label{C_n-trees}
\end{figure}

Let ${\bf 1}_n$ be the $n$-component trivial string link. 
Although $({\bf 1}_n)_{T_\pi}$ and $({\bf 1}_n )_{T_\pi^{-1}}$ are not unique up to ambient isotopy, it is unique up to $C_n$-equivalence, by Lemmas~\ref{edgecrossing} and \ref{inverse} below.
Therefore for any $\pi \in \Pi$, we may choose $({\bf 1}_n)_{T_\pi}$ and $({\bf 1}_n)_{T_\pi^{-1}}$ uniquely up to $C_n$-equivalence.
We then have the following lemma by \cite{Mil01} (cf. Figure~\ref{Ck-tree}). 
Here, $\mu$-invariants are link-homotopy invariants for string links (see \cite{HL}). 
For any string link $L$, $\mu_{L}(I)$ coincides with $\bar{\mu}_{\hat{L}}(I)$ modulo $\Delta_{\hat{L}}(I)$, 
where $\hat{L}$ is a link obtained by the closure of $L$.

\begin{lemma} \label{MIlnorproperty} 
For any $\pi,\pi' \in \Pi$,
$$ {\mu}_{({\bf 1}_n )_{T_\pi}}(\pi')= \left\{ 
          \begin{array}{ll}
               1 & \text{ if } \pi=\pi' \\
               0 & \text{ if } \pi \neq \pi',  
          \end{array} 
            \right. 
$$ 
and the Milnor's $\mu$-invariants of $({\bf 1}_n)_{T_\pi}$ of length  less than or equal to $n-1$ vanish.
\end{lemma}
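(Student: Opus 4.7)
The plan is to prove both assertions, handling the vanishing of low-length invariants by a clasper-theoretic shortcut and the length-$n$ computation via an explicit Magnus expansion. For the vanishing, I would invoke the basic principle (see \cite{Ha}) that $({\bf 1}_n)_{T_\pi}$ is $C_{n-1}$-equivalent to the trivial string link ${\bf 1}_n$, since it differs from ${\bf 1}_n$ by a single surgery along a $C_{n-1}$-tree. A standard result is that $C_k$-equivalence preserves Milnor's $\mu$-invariants of length at most $k$, and ${\bf 1}_n$ has vanishing $\mu$-invariants. Hence $\mu_{({\bf 1}_n)_{T_\pi}}(I)=0$ for every sequence $I$ with $|I|\leq n-1$, proving the second assertion and, as a byproduct, guaranteeing that the length-$n$ invariants below are unambiguous integers free of indeterminacy.

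For the length-$n$ computation, I would identify the longitude word $w_n\in F(m_1,\dots,m_{n-1})$ representing the $n$-th strand of $({\bf 1}_n)_{T_\pi}$ in the reduced fundamental group of the complement of the first $n-1$ strands. Reading off the surgery prescription of $T_\pi$ from Figure~\ref{C_n-trees} (the positive half-twist $\oplus$ fixing the overall sign) and applying the standard tree-bracket dictionary from \cite{Ha}, one shows that in the reduced free group $w_n$ is conjugate to the left-normed iterated commutator
\[
w_n\;\equiv\;\bigl[\,[\,\cdots\,[\,[m_{\pi(1)},m_{\pi(2)}],m_{\pi(3)}\,],\,\cdots\,],\,m_{\pi(n-1)}\,\bigr].
\]

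Finally, I would compute $\varphi(w_n)$ modulo terms of degree $\geq n$. The key inductive lemma is that if $c\in F(m_1,\dots,m_{n-1})$ satisfies $\varphi(c)=1+v$ with $v$ supported in degrees $\geq d$, then
\[
\varphi([c,m_i])\;=\;1+(vX_i-X_iv)+(\text{terms of degree}\geq d+2),
\]
which follows by a direct expansion of $cm_ic^{-1}m_i^{-1}$ using $\varphi(m_i^{-1})=1-X_i+X_i^2-\cdots$. Iterating $n-2$ times starting from $\varphi([m_{\pi(1)},m_{\pi(2)}])=1+X_{\pi(1)}X_{\pi(2)}-X_{\pi(2)}X_{\pi(1)}+(\text{deg}\geq 3)$, and focusing on monomials whose leftmost factor is $X_{\pi(1)}=X_1$, one proves by induction on $k$ that the unique such monomial at stage $k$ is $X_{\pi(1)}X_{\pi(2)}\cdots X_{\pi(k)}$, with coefficient $+1$: indeed at each step $[c,m_i]=cX_i-X_ic$, every monomial in $X_ic$ begins with $X_i\neq X_1$, so the only contribution to monomials beginning with $X_1$ comes from $cX_i$, which by induction contains exactly one. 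Applied with $k=n-1$, since every $\pi'\in\Pi$ satisfies $\pi'(1)=1$, this yields $\mu_{({\bf 1}_n)_{T_\pi}}(\pi')=1$ for $\pi'=\pi$ and $0$ for any other $\pi'\in\Pi$, as required. The main obstacle is the middle step of extracting the commutator word $w_n$ with correct orientation from the pictorial surgery description of $T_\pi$; once this dictionary is in place, the Magnus expansion argument is a routine induction.
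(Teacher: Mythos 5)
Your argument is correct, and in substance it reconstructs the computation that the paper never writes out: the paper's entire justification of this lemma is the phrase ``by \cite{Mil01} (cf.\ Figure~\ref{Ck-tree})'', i.e.\ an appeal to Milnor's own evaluation of the $\mu$-invariants of the links produced by this kind of tree surgery, which is exactly the Magnus expansion of an iterated-commutator longitude that you carry out. Your handling of the vanishing statement is the one point where you genuinely take a different route: you use the fact that surgery along a single $C_{n-1}$-tree leaves Milnor invariants of length at most $n-1$ unchanged (Habiro's theory, cf.\ \cite{Ha,MY}), whereas the argument implicit in \cite{Mil01} is more elementary --- each leaf of $T_\pi$ grabs a different strand, so deleting any strand makes the surgery trivial, every proper substring link of $({\bf 1}_n)_{T_\pi}$ is trivial, and the shorter invariants vanish; both work, and yours has the small advantage of covering sequences with repeated indices in one stroke. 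The only step you do not actually prove --- reading off from Figure~\ref{C_n-trees} that the $n$-th longitude is, up to conjugation and terms invisible in degree $n-1$, the left-normed commutator $[\ldots[[m_{\pi(1)},m_{\pi(2)}],m_{\pi(3)}],\ldots,m_{\pi(n-1)}]$ with the half-twist $\oplus$ producing the sign $+1$ --- is precisely the step the paper also leaves to the picture and the citation, so it is not a gap relative to the paper's own standard. Your Magnus induction itself is sound: the identity $\varphi([c,m_i])=1+(vX_i-X_iv)+(\deg\geq d+2)$ is correct, and since $\pi(k)\neq 1$ for $k\geq 2$, only the term $vX_{\pi(k)}$ can contribute monomials beginning with $X_1$, so the unique degree-$(n-1)$ monomial starting with $X_1$ is $X_{\pi(1)}\cdots X_{\pi(n-1)}$ with coefficient $1$; as every $\pi'\in\Pi$ begins with $1$ and ends with $n$, this gives exactly $\mu_{({\bf 1}_n)_{T_\pi}}(\pi')=1$ or $0$ according as $\pi'=\pi$ or not.
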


\begin{lemma}[\cite{MY}] \label{additivity} 
Let $L$ and $L'$ be $n$-component string links. 
Let $m$ and $m'$ be integers. 
If $\mu_{L}(I)=0$ for any $I$ with $|I| \leq m$ 
and $\mu_{L'}(I')=0$ for any $I'$ with $|I'| \leq m'$, 
then for any $J$ with $|J| \leq m+m'$
\[ \mu_{L \cdot L'}(J)= \mu_{L}(J) + \mu_{L'}(J). \\ \]  
\end{lemma}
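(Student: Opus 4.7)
The plan is to exploit the multiplicativity of the Magnus expansion under the stacking product for string links. First I would recall the standard fact that, for $n$-component string links $L$ and $L'$, the $j$-th longitude $\lambda_j(L\cdot L')$ of the composed string link, viewed in the Milnor reduced free group $\overline{F}$ on the meridians $m_1,\dots,m_n$, can be written as the product $\lambda_j(L)\cdot\lambda_j(L')$ (after identifying the meridians of $L$ and $L'$ that are glued together by the stacking). Applying the Magnus homomorphism $\varphi\colon \overline{F}\to \widehat{Z}$ turns this identity into
\[
\varphi(\lambda_j(L\cdot L')) \;=\; \varphi(\lambda_j(L))\cdot\varphi(\lambda_j(L')).
\]

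Next I would interpret the vanishing hypotheses in terms of degree in $\widehat{Z}$. Recall that $\mu_L(I)$ with $|I|=k$ and last index $j$ is, by definition, the coefficient in $\varphi(\lambda_j(L))$ of a monomial $X_{i_1}\cdots X_{i_{k-1}}$ of degree $k-1$. Hence the hypothesis that $\mu_L(I)=0$ for all $|I|\le m$ is equivalent to the statement that $\varphi(\lambda_j(L))-1$ has only terms of degree $\ge m$ for every $j$; likewise $\varphi(\lambda_j(L'))-1$ has only terms of degree $\ge m'$.

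Writing $A_j:=\varphi(\lambda_j(L))-1$ and $B_j:=\varphi(\lambda_j(L'))-1$, the factorization above becomes
\[
\varphi(\lambda_j(L\cdot L')) \;=\; (1+A_j)(1+B_j) \;=\; 1 + A_j + B_j + A_jB_j.
\]
The cross term $A_jB_j$ contains only monomials of degree $\ge m+m'$, so it contributes nothing to the coefficients of degree $\le m+m'-1$. Reading off the coefficient of $X_{i_1}\cdots X_{i_{r-1}}$ for any sequence $J=i_1\ldots i_{r-1}j$ with $|J|=r\le m+m'$ then gives $\mu_{L\cdot L'}(J)=\mu_L(J)+\mu_{L'}(J)$, which is the desired identity.

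The only real obstacle is the bookkeeping in the very first step: one has to verify carefully that, with the conventions used in the paper (reduced free group presentation and the appropriate choice of longitudes based at the bottom of the string link), the longitude of $L\cdot L'$ really factors as the ordered product $\lambda_j(L)\cdot\lambda_j(L')$ in $\overline{F}$, with no extra conjugation correction. Once this is in place, the rest is a one-line truncated power-series computation, and the induction-style degree shift $m+m'$ follows immediately from the multiplicativity of degrees in $\widehat{Z}$.
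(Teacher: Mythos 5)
This lemma is not proved in the paper at all: it is quoted from \cite{MY}, so there is no internal argument to compare with, and your Magnus-expansion outline is indeed the standard route to this statement. However, there is a genuine gap, and it sits exactly at the step you flag and then resolve the wrong way. The $j$-th longitude of $L\cdot L'$ does \emph{not} equal the ordered product $\lambda_j(L)\cdot\lambda_j(L')$ ``with no extra conjugation correction.'' When the word $\lambda_j(L')$, written in the meridians at the bottom of $L'$, is re-expressed in the meridians at the bottom of $L\cdot L'$, each generator $m_i$ must be replaced by the corresponding top meridian of $L$, which is the conjugate $\lambda_i(L)\,m_i\,\lambda_i(L)^{-1}$; this correction survives in the reduced free group. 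It is not negligible in general: it is precisely the source of the bilinear (products of linking numbers) correction terms in, say, length-$3$ invariants of a product string link, terms which your formula $(1+A_j)(1+B_j)$ with only the cross term $A_jB_j$ would miss.

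The repair stays inside your framework but uses the hypothesis on $L$ a second time: since $\varphi(\lambda_i(L))=1+A_i$ with $A_i$ supported in degrees $\ge m$, one gets $\varphi\bigl(\lambda_i(L)\,m_i\,\lambda_i(L)^{-1}\bigr)=1+X_i+(\text{terms of degree}\ \ge m+1)$, so substituting these conjugated generators into a monomial of $B_j$ (all of degree $\ge m'$) perturbs $\varphi(\lambda_j(L'))$ only in degrees $\ge m+m'$. Together with your estimate that $A_jB_j$ has degree $\ge m+m'$, this yields $\mu_{L\cdot L'}(J)=\mu_L(J)+\mu_{L'}(J)$ for $|J|\le m+m'$. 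So the conclusion and the general strategy are right, but the ``bookkeeping'' you defer is not cosmetic: without handling the conjugation substitution the degree count is incomplete, and the asserted exact factorization of the longitude is false.
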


\begin{lemma}[\cite{Ha}]  \label{edgecrossing}
Let $T_1$ be a simple $C_{k}$-tree for a (string) link $L$, and $T'_1$ be obtained from $T_1$ by changing a crossing between an edge of $T_1$ and an edge of another simple tree $T_2$ for $L$ (resp. a component of $L$ or an edge of $T_1$) (see Figure \ref{edge}). 
Then, $L_{T_1 \cup T_2}$ is $C_{k+1}$-equivalent to $L_{{T'_1} \cup {T_2}}$ (resp. $L_{T_1}$ is $C_{k+1}$-equivalent to $L_{T'_1}$),
where $C_{k+1}$-equivalence is realized by simple $C_{k+1}$-trees for $L$. 
\end{lemma}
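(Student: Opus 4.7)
The plan is to model the edge-crossing change as surgery along a small auxiliary clasper and then reduce it via Habiro's zip construction. The whole argument is local at the crossing, so I work inside a 3-ball $B$ containing only the two arcs whose crossing we wish to change.

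First, inside $B$, I would introduce an auxiliary simple $C_1$-tree $Y$ whose two leaves are small meridians of the two arcs meeting at the crossing. By the standard correspondence between clasper surgery and surgery along a framed link, surgery along $Y$ realizes precisely a crossing change on those two arcs, so
\[ L_{T_1 \cup T_2 \cup Y} \text{ is ambient isotopic to } L_{T'_1 \cup T_2} \]
(and in the respective case, $L_{T_1 \cup Y}$ is ambient isotopic to $L_{T'_1}$). Thus it suffices to prove that $L_{T_1 \cup T_2 \cup Y}$ is $C_{k+1}$-equivalent to $L_{T_1 \cup T_2}$ via simple $C_{k+1}$-trees.

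Next I would apply Habiro's zip construction to $Y$. When a leaf of $Y$ is a meridian of an edge of another tree clasper $T$, the zip move exchanges that leaf for a configuration of leaves that are parallel copies of the leaves of $T$, up to surgery along trees of strictly higher complexity. Applying this to the leaf of $Y$ hooking an edge of $T_1$ (which carries $k+1$ leaves on $L$) converts $Y$, modulo $C_{k+2}$-equivalent corrections, into a tree clasper inheriting $k$ of those leaves together with the remaining leaf hooking the second arc. I then treat the three cases: if the second arc is a component of $L$, the result is immediately a simple $C_{k+1}$-tree; if it is an edge of another simple tree $T_2$, a second zip replaces that leaf by parallel copies of the leaves of $T_2$, producing a simple tree of even higher complexity which is in particular $C_{k+1}$-equivalent to a disjoint union of simple $C_{k+1}$-trees by Habiro's leaf-splitting calculus; the self-crossing case is handled analogously by zipping on both sides of the affected edge of $T_1$.

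The main obstacle is the careful bookkeeping of the zip moves: one must verify that all resulting leaves are simple (each intersects $L$ in a single point) and that the higher-order error terms are themselves realized by simple $C_{k+1}$-trees. Simplicity is preserved because each new leaf is isotopic to a parallel copy of an already-simple leaf of $T_1$ or $T_2$, while the error bookkeeping proceeds by induction on $k$ using Habiro's structural moves (the AS, IHX, and STU-type relations for tree claspers). Combining the localization, the two applications of zip, and the inductive reduction yields the claimed $C_{k+1}$-equivalence realized by simple $C_{k+1}$-trees.
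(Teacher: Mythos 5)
The paper does not actually prove this lemma---it is quoted from Habiro's clasper calculus \cite{Ha}---so there is no in-paper argument to compare against; judged on its own merits, your proposal follows the standard strategy (realize the edge crossing by surgery along an auxiliary basic clasper $Y$ whose leaves are meridians of the two arcs, then absorb $Y$ into higher-degree trees by clasper calculus), which is indeed the route Habiro takes. However, the central step as you state it does not deliver the conclusion. You claim the zip construction turns $Y$ into ``a tree clasper inheriting $k$ of those leaves together with the remaining leaf hooking the second arc.'' Such a tree has $k+1$ leaves, i.e.\ it is a $C_k$-tree, so your reduction would only show that $L_{T_1\cup T_2}$ and $L_{T'_1\cup T_2}$ differ by surgery along degree-$k$ trees---which is not $C_{k+1}$-equivalence, and is inconsistent with your own next sentence asserting the result is ``immediately a simple $C_{k+1}$-tree.'' The correct difference term must retain (copies of) \emph{all} $k+1$ leaves of $T_1$ plus one new leaf that is a meridian of the second arc, attached through a new trivalent vertex inserted on the affected edge of $T_1$; only then does one get $k+2$ leaves, hence a $C_{k+1}$-tree, with genuinely higher-degree corrections at $C_{k+2}$. (A sanity check is $k=1$: moving the edge of a crossing-change clasper on strands $a,b$ across a strand $c$ changes the result by a delta-move-type $C_2$-tree with leaves on $a$, $b$ \emph{and} $c$, not by a $C_1$-tree.) Moreover, the move you invoke---``a leaf that is a meridian of an edge of $T$ is exchanged for parallel copies of the leaves of $T$''---is not one of Habiro's moves as stated; the zip construction splits a leaf grasping several strands, and to apply it here one must first analyze which strands pass through that leaf (e.g.\ after performing the $T_1$-surgery, or by re-expressing the crossing change as the insertion of a node-plus-leaf on the edge), which is exactly where the degree count above has to be established.

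Two further points need real arguments rather than the analogy you gesture at. First, when the second arc is an edge of $T_1$ itself, grafting produces a looped (non-tree) clasper, and reducing it to simple $C_{k+1}$-trees requires a separate lemma of Habiro's calculus; ``zipping on both sides of the affected edge'' is not a proof. Second, the statement requires the $C_{k+1}$-equivalence to be realized by \emph{simple} $C_{k+1}$-trees \emph{for $L$}, disjoint from the remaining claspers; your corrections are produced in the middle of the construction (some of them a priori live near $T_1$ or after its surgery), so you must check they can be pushed off $T_1\cup T_2$ and that each leaf meets $L$ in a single point. With the corrected degree count and these two verifications the argument does become Habiro's; as written, the key quantitative step is wrong, so the proof does not go through.
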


\begin{figure}[ht]
$$\raisebox{33 pt}{\begin{overpic}[bb=0 0 831 178, 
width=340pt]{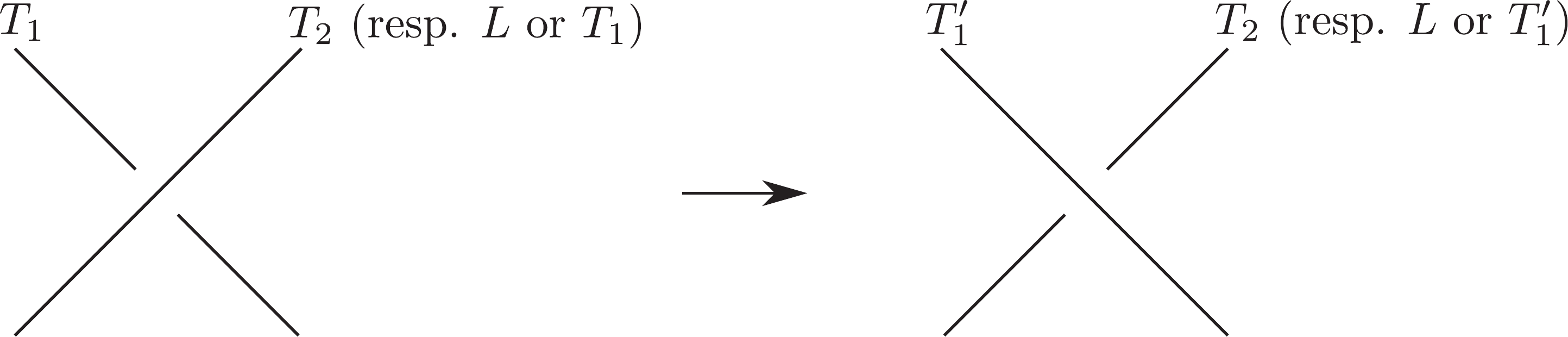}\end{overpic}}$$
\vspace{-1.8cm}
\caption{Edge crossing change.} \label{edge}
\end{figure}

\begin{lemma}[\cite{Ha}] \label{sliding}
Let $T_1$ be a simple $C_{k_1}$-tree for a (string) link $L$ 
and $T_2$ a simple $C_{k_2}$-tree for $L$, where $T_1$ and $T_2$ are disjoint.  
Let ${T'_1}$ be obtained from $T_1$ by sliding a leaf of $T_1$ over a leaf of $T_2$ 
(see Figure~\ref{slide}). 
Then, $L_{T_1 \cup T_2}$ is $C_{k_1+1}$-equivalent to $L_{{T'_1} \cup {T_2}}$,
where $C_{k_1+1}$-equivalence is realized by simple $C_{k_1+1}$-trees for $L$.
\end{lemma}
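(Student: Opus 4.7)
The plan is to localize the leaf slide inside a small $3$-ball $B$ that contains the two relevant leaves $l_1$ of $T_1$ and $l_2$ of $T_2$ together with the arc along which $l_1$ is slid, and to show that the difference between surgery along $T_1\cup T_2$ and surgery along $T'_1\cup T_2$ is realized by a single additional simple $C_{k_1+1}$-tree supported near $B$. Outside $B$ the two surgeries agree, so only the local model has to be analyzed.

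First, I would unpack the slide in its standard picture: before the slide $l_1$ bounds a small disk disjoint from $l_2$, while after the slide $l_1$ bounds a disk whose boundary follows a band running once around $l_2$. In the framed-link description of the clasper surgery this is precisely a handle slide of the surgery component associated with $l_1$ over the surgery component associated with $l_2$. Comparing the two framed-surgery pictures by Kirby calculus reduces the problem to identifying a correction clasper inside $B$.

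Next, I would construct that correction clasper by Habiro's zip construction: replace the leaf $l_1$ in a copy of $T_1$ by a parallel copy of $l_2$, joined to the remaining $k_1$ leaves of $T_1$ by a copy of the edge tree of $T_1$. This produces a tree clasper with $k_1+2$ leaves, i.e.\ a $C_{k_1+1}$-tree. Because each new leaf is a small parallel copy of a leaf of $T_1$ or of $l_2$, each meets $L$ transversely in exactly one point, so the clasper is simple; surgery along it converts $L_{T_1\cup T_2}$ into $L_{T'_1\cup T_2}$ up to ambient isotopy.

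The main obstacle I expect is bookkeeping of the crossings produced in the slide region: edges of the auxiliary clasper will generically cross edges of $T_1\cup T_2$ and strands of $L$, and each such crossing has to be cleaned up by invoking Lemma~\ref{edgecrossing}. Every such application introduces only further simple trees of degree at least $k_1+1$, so the error terms remain within the claimed $C_{k_1+1}$-equivalence. A straightforward induction on the total number of these crossings then finishes the proof.
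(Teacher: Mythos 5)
The paper does not actually prove this statement: it is quoted from Habiro \cite{Ha}, where it is obtained from the zip construction and his moves on claspers (see also \cite{MY}). So your sketch has to stand on its own, and as written it has two concrete gaps. First, the degree count of your correction clasper is wrong. A copy of $T_1$ in which the slid leaf $l_1$ is replaced by a parallel copy of $l_2$ has the $k_1$ remaining leaves of $T_1$ plus one new leaf, i.e.\ $k_1+1$ leaves in total, so it is a $C_{k_1}$-tree, not a tree with $k_1+2$ leaves as you claim. Surgery along such a tree can change the link within its $C_{k_1}$-class, so even if it did account for the difference between $L_{T_1\cup T_2}$ and $L_{T'_1\cup T_2}$, you would not obtain the asserted $C_{k_1+1}$-equivalence. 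Moreover, the claim that surgery along this one auxiliary tree converts $L_{T_1\cup T_2}$ into $L_{T'_1\cup T_2}$ up to ambient isotopy is exactly the nontrivial content (it is essentially the leaf-splitting/zip step, cf.\ Lemma~\ref{sepa}); in your plan it is asserted, not proved. The missing idea is the second exchange: after splitting the slid leaf, the new tree has a leaf which is a copy of $l_2$, and one must trade this tree against $T_2$ to gain the extra degree.

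Second, the reduction via Kirby calculus cannot work as stated. If the leaf slide were literally a handle slide of the framed link associated with $T_1\cup T_2$ in $S^3\setminus L$, the two surgered pairs would be homeomorphic, hence $L_{T_1\cup T_2}$ and $L_{T'_1\cup T_2}$ would be ambient isotopic and no correction clasper would be needed at all. They are not isotopic in general: take $L$ the trivial $3$-component string link, $T_1$ a simple $C_1$-tree joining strands $1,2$ and $T_2$ a simple $C_1$-tree joining strands $2,3$, with the two leaves adjacent on strand $2$; sliding one leaf over the other is an instance of this lemma (the paper derives move (2) of Figure~\ref{MoCnCla} from it), and it changes the string link by the commutator of the two clasp surgeries, which is nontrivial (detected by $\mu(123)$) and is realized by a $C_2$-tree meeting all three strands. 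This example also contradicts your description of the correction term: a copy of $T_1$ with one leaf replaced by a parallel copy of $l_2$ would produce another degree-one clasp between strands $1$ and $2$, which is not the actual difference. So both the framed-link step and the identification of the correction clasper need to be replaced by the zip-construction argument of \cite{Ha}, with the crossing bookkeeping via Lemma~\ref{edgecrossing} kept, as you propose, for the residual error terms.
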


\begin{figure}[ht]
$$\raisebox{33 pt}{\begin{overpic}[bb=0 0 498 179, 
width=230pt]{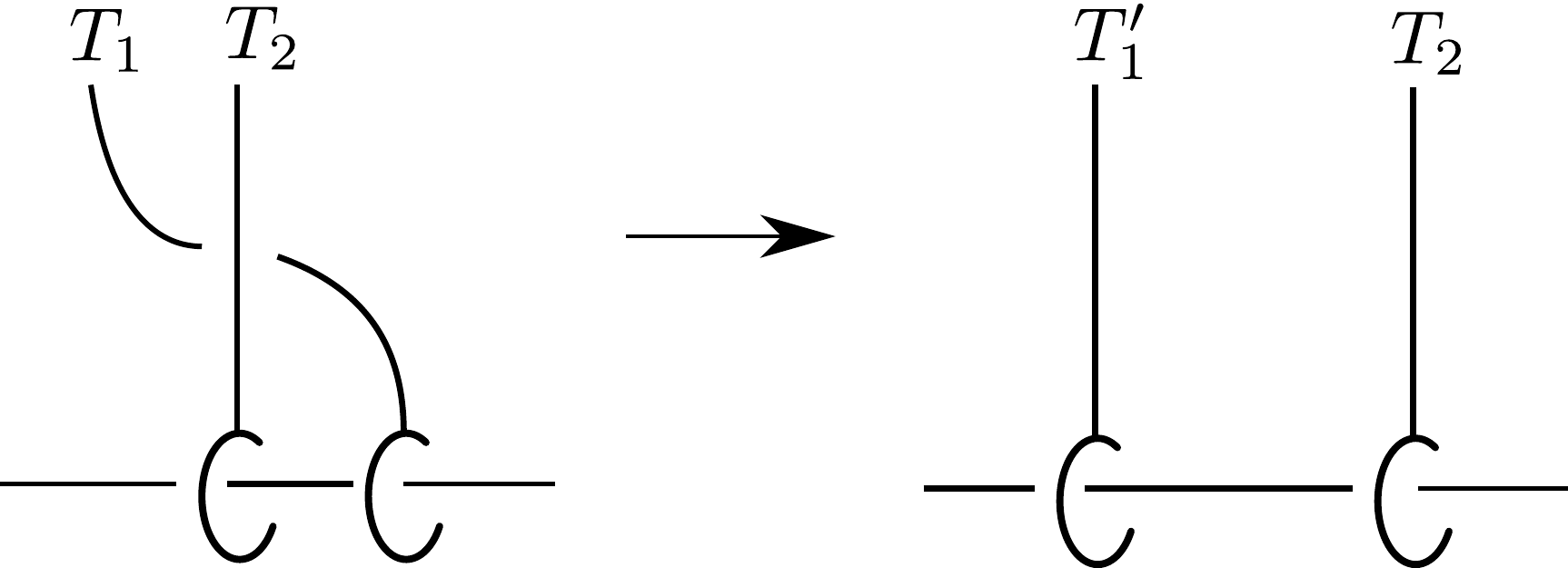}\end{overpic}}$$
\vspace{-1.8cm}
\caption{Leaf slide.} \label{slide}
\end{figure}

\begin{lemma}[\cite{Ha}] \label{inverse}
Let $T$ be a simple $C_k$-tree for the trivial n-component string link ${\bf 1}_n$ and $T'$ a simple $C_k$-tree obtained from $T$ by adding a half-twist on an edge.
Then, $({\bf 1}_n)_T \cdot ({\bf 1}_n)_{T'}$ is ${C_{k+1}}$-equivalent to ${\bf 1}_n$,
where $C_{k+1}$-equivalence is realized by simple $C_{k+1}$-trees for $L$.
\end{lemma}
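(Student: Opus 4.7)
The plan is to show that the stacked string link $({\bf 1}_n)_T \cdot ({\bf 1}_n)_{T'}$ can be trivialized modulo $C_{k+1}$-equivalence by first positioning $T \cup T'$ as a pair of parallel claspers and then performing a local cancellation. To begin, I would view the composition as ${\bf 1}_n$ equipped with two disjoint simple $C_k$-trees: $T$ sitting in the lower half of the string link's cylinder and $T'$ in the upper half, where $T'$ is an identical vertical copy of $T$ except for the single positive half-twist on one prescribed edge. A vertical ambient isotopy of $T'$ brings it into a parallel pushoff of $T$ that carries this half-twist.

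Next, I would use Lemma~\ref{sliding} to slide each leaf of $T'$ along its strand of ${\bf 1}_n$ so that it is adjacent to the corresponding leaf of $T$. Each slide changes $({\bf 1}_n)_{T \cup T'}$ only by a simple $C_{k+1}$-tree for ${\bf 1}_n$, which is absorbed into the desired $C_{k+1}$-equivalence. After the leaves are paired up, $T \cup T'$ lies in a regular neighborhood of $T$, disjoint from the strings except at those paired leaves. I would then apply Lemma~\ref{edgecrossing} to adjust the embeddings of the edges of $T'$ to match those of $T$, again at the cost of simple $C_{k+1}$-trees. The upshot is that, modulo $C_{k+1}$-equivalence, I may assume $T \cup T'$ sits in a standard model inside a ball $B$: two parallel $C_k$-trees with matched leaves whose only difference is the half-twist on the designated edge.

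Inside $B$, the problem reduces to a local computation for the framed link associated with surgery on $T \cup T'$. The half-twist on the edge reverses the framing of a central component of $L_{T'}$ relative to the corresponding component of $L_T$, so that the two framed links form canceling Hopf-type pairs. A handle-slide calculation shows that the resulting framed surgery is isotopic to the identity inside $B$, modulo residual corrections that can be realized by simple $C_{k+1}$-trees coming from the interactions of the parallel leaf pairs. The base case $k=1$ is immediate: surgery on a $C_1$-tree is a crossing change, the half-twist flips its sign, and the composition is ambient isotopic to ${\bf 1}_n$.

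The hard part will be the local Kirby-calculus verification in the model ball $B$, namely confirming that the half-twist exactly inverts the framing data so that $L_T$ and $L_{T'}$ cancel, and that every leftover correction is realizable by \emph{simple} $C_{k+1}$-trees rather than more complicated claspers. This is the essential content of Habiro's inverse lemma, and it requires careful bookkeeping of framings, orientations, and the precise $C_{k+1}$-tree corrections accumulated during the leaf slides, edge adjustments, and the final handle slides.
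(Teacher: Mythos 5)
Note first that the paper does not prove this statement at all: it is quoted from Habiro's clasper paper \cite{Ha} (as are Lemmas \ref{edgecrossing}, \ref{sliding}, \ref{IHX}), so there is no in-paper argument to compare against; your proposal has to stand on its own.

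As it stands it has a genuine gap. The reduction step is fine and standard: stacking the two copies, sliding the leaves of $T'$ along the strands to lie parallel to those of $T$ (Lemma \ref{sliding}) and matching the edge embeddings by crossing changes between edges and strands or other edges (Lemma \ref{edgecrossing}) only costs simple $C_{k+1}$-trees, so one may assume $T\cup T'$ is a parallel pair in a ball differing by the single half-twist. But everything after that is asserted rather than proved. The claim that ``the half-twist reverses the framing of a central component of $L_{T'}$ so that the two framed links form canceling Hopf-type pairs'' and that ``a handle-slide calculation shows the surgery is isotopic to the identity'' is exactly the content of Habiro's inverse lemma, and you explicitly defer it as ``the hard part.'' In the clasper-to-framed-link dictionary the half-twist does not simply flip a framing; the actual cancellation is usually obtained either by an explicit Kirby-calculus computation for the parallel pair or, more commonly, by induction using the zip/fission construction to reduce the $C_k$-tree pair to basic $C_1$-claspers, where a clasper and its half-twisted parallel copy visibly cancel by isotopy. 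Without carrying out one of these arguments (including checking that no residual correction terms beyond simple $C_{k+1}$-trees appear), the proposal only reduces the lemma to itself; the $k=1$ observation you make is the base case, not the induction. So the plan is the right shape, but the essential cancellation step is missing.
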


\begin{lemma}[\cite{Ha}]  \label{IHX}
Let $T_I$, $T_H$ and $T_X$ be simple $C_{k}$-trees for the trivial n-component string link ${\bf 1}_n$ which differ only in a small ball as in illustrated in Figure \ref{IHXmove}. 
Here, $\oplus$ means a positive half-twist. 
Then $({\bf 1}_n)_{T_I}$ is $C_{k+1}$-equivalent to $({\bf 1}_n)_{T_H} \cdot ({\bf 1}_n)_{T_X}$, 
where $C_{k+1}$-equivalence is realized by simple $C_{k+1}$-trees for ${\bf 1}_n$.
\end{lemma}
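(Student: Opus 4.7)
The plan is to prove this by realizing the IHX identity through an explicit sequence of the three elementary clasper moves introduced in Lemmas~\ref{edgecrossing}, \ref{sliding}, and \ref{inverse}, following Habiro's strategy in \cite{Ha}. By hypothesis $T_I$, $T_H$, $T_X$ agree outside a small ball $B$; inside $B$ we see respectively the ``I'', ``H'', and ``X'' local pictures of Figure~\ref{IHXmove}, with one distinguished edge carrying the half-twist $\oplus$. Since the product $({\bf 1}_n)_{T_H} \cdot ({\bf 1}_n)_{T_X}$ is obtained by surgery along the disjoint union $T_H \sqcup T_X$ placed in two stacked copies of $B$, I would first isotope these two trees into a single copy of $B$ so that they sit side by side with disjoint supports.

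Next, I would apply Lemma~\ref{sliding} to slide the two leaves lying on opposite ends of the would-be central edge of $T_H$ and $T_X$ over each other, fusing them into a single leaf and thereby producing one new $C_k$-tree $T'$ whose external leaves match those of $T_I$. Each such slide modifies the ambient (string) link only by simple $C_{k+1}$-trees, which we collect on the side. The tree $T'$ may, however, differ from $T_I$ by an extra internal edge crossing and by the presence or absence of a half-twist on the central edge.

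The final cleanup is to remove the extra edge crossing via Lemma~\ref{edgecrossing} and to cancel (or create) the central half-twist via Lemma~\ref{inverse}; each correction again costs us only further simple $C_{k+1}$-trees. Reading the composition in reverse then gives
\[
({\bf 1}_n)_{T_H}\cdot ({\bf 1}_n)_{T_X} \;\underset{C_{k+1}}{\sim}\; ({\bf 1}_n)_{T_I},
\]
with the $C_{k+1}$-equivalence realized by simple $C_{k+1}$-trees, as required.

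The main obstacle will be the bookkeeping of signs: a miscounted half-twist would produce $({\bf 1}_n)_{T_H}^{\pm 1}\cdot ({\bf 1}_n)_{T_X}^{\mp 1}$ instead of the claimed product. One must therefore verify that the half-twist $\oplus$ prescribed in Figure~\ref{IHXmove} is exactly the one needed so that the signs accumulated by the sequence of leaf slides, edge-crossing changes, and inversions match the identity. Beyond this sign check and the verification that every intermediate clasper introduced by Lemmas~\ref{edgecrossing}--\ref{inverse} remains simple (which follows directly from the statements of those lemmas), the argument is purely diagrammatic.
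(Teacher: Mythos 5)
There is a genuine gap, and it sits at the heart of your argument: the ``fusion'' step. You propose to place $T_H$ and $T_X$ side by side and then, by Lemma~\ref{sliding}, slide their leaves over each other so that they ``fuse into a single leaf, thereby producing one new $C_k$-tree $T'$.'' No such move exists. Lemma~\ref{sliding} only repositions a leaf of one tree relative to a leaf of another, always keeping the two trees disjoint; it never merges two disjoint $C_k$-trees into a single $C_k$-tree. The only statement in this paper that relates surgery along one tree to surgery along two is Lemma~\ref{sepa}, and it goes the other way (splitting a leaf), and moreover it requires the two resulting trees to be parallel copies that agree everywhere except in the split leaf. That hypothesis fails for $T_H$ and $T_X$: the whole point of Figure~\ref{IHXmove} is that the I, H and X configurations differ in their \emph{internal} branching (which pairs of external branches meet at the two trivalent vertices inside the ball), not merely in the position of a leaf, so no amount of leaf sliding, edge-crossing changes (Lemma~\ref{edgecrossing}) or half-twist cancellation (Lemma~\ref{inverse}) can convert $T_H\sqcup T_X$ into a single tree isotopic to $T_I$. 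Your ``sign bookkeeping'' concern is therefore not the main obstacle; the construction it is meant to correct does not get off the ground.

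For comparison: the paper gives no proof of this lemma at all --- it is quoted from Habiro \cite{Ha}, where the topological IHX relation is established by a substantially more delicate argument (a careful application of the full clasper calculus, including zip-type constructions in which a leaf is presented as a band sum and the tree is split accordingly, combined with the antisymmetry relation of Lemma~\ref{inverse} and the sliding and crossing-change lemmas, all applied to suitably chosen auxiliary trees). If you want a self-contained proof rather than a citation, the route is to start from a single tree whose distinguished leaf or edge is split via Lemma~\ref{sepa}-type moves so that the three internal pairings I, H, X appear as the terms of the resulting product, and then control the error terms as simple $C_{k+1}$-trees; the direct fusion of $T_H$ and $T_X$ that your sketch relies on cannot be made rigorous.
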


\begin{figure}[ht]
$$\raisebox{33 pt}{\begin{overpic}[bb=0 0 440 149, 
width=230pt]{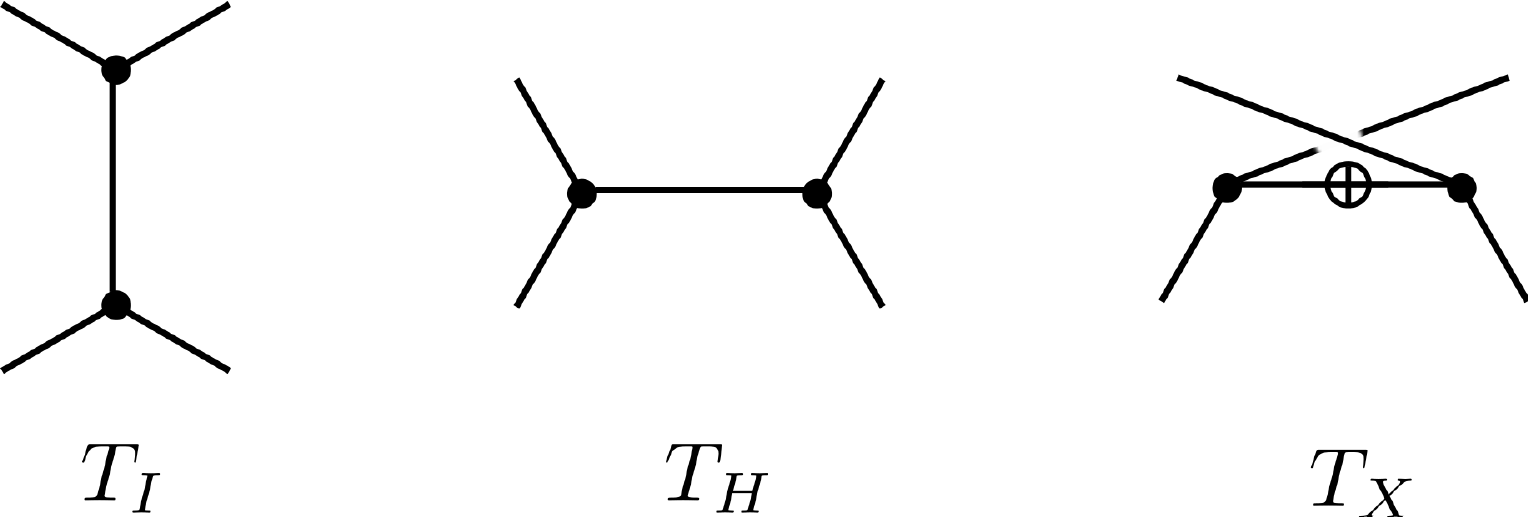}\end{overpic}}$$
\vspace{-1.8cm}
 \caption{IHX move.}
\label{IHXmove}
\end{figure}

\begin{lemma}[\cite{Meil}] \label{sepa}
Let $T$ be a $C_k$-tree for a (string) link $L$. Let $f_1$ and $f_2$ be two disks obtained by splitting a leaf $f$ of $T$ along an arc $a$ as illustrated in figure~\ref{separate} (i.e. $f = f_1 \cup f_2$ and $f_1 \cap f_2 = a$). Then, $L_T$ is ${ C_{k+1}}$-equivalent to $L_{T_1 \cup T_2}$, where $T_i$ denotes a $C_k$-tree for $L$ obtained from a parallel copy of $T$ by replacing $f$ by $f_i$ $(i = 1, 2)$.
\end{lemma}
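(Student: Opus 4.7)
The plan is to work inside a small regular neighborhood of the leaf $f$ and to convert $T_1 \cup T_2$ into $T$ by a bounded sequence of the clasper moves in Lemmas~\ref{edgecrossing} and \ref{sliding}, each of which changes the surgery only up to $C_{k+1}$-equivalence. Since the statement is intrinsic to the trees and the arc $a$, once this local comparison is made the global result follows immediately.

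First, I would fix a $3$-ball $B$ containing the leaf $f$ (together with a short initial segment of the edge of $T$ adjacent to $f$) and meeting $L$ transversely in a disjoint collection of arcs that hit $f$ at its interior intersection points. By the hypothesis that $f = f_1 \cup f_2$ and $f_1 \cap f_2 = a$, these arcs of $L \cap B$ split into the two subfamilies intersecting $f_1 \setminus a$ and $f_2 \setminus a$. By construction, $T_1$ and $T_2$ are parallel copies of $T$ that differ from $T$ only inside $B$, where their leaves are $f_1$ and $f_2$ respectively.

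Next, I would set up parallel copies so that $T_1$ and $T_2$ share all their edges and trivalent disks outside $B$ (the parallelism may be arranged, at the cost of edge crossings, by a small ambient isotopy). Any crossings between the edges of $T_1$ and $T_2$ thereby created are uncrossed by Lemma~\ref{edgecrossing}, introducing only $C_{k+1}$-trees. Now inside $B$ we have two leaves $f_1$ and $f_2$ sharing the arc $a$, attached by two parallel edges to the common rest of the tree. A leaf-slide along $a$ (Lemma~\ref{sliding}) amalgamates $f_1$ and $f_2$ into a single leaf $f = f_1 \cup f_2$, while the two parallel edges collapse onto the single edge of $T$ attached to $f$. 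The resulting clasper is precisely $T$, and every modification along the way cost only $C_{k+1}$-trees, so $L_{T_1 \cup T_2}$ is $C_{k+1}$-equivalent to $L_T$.

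The hard part will be verifying that the amalgamation inside $B$ reproduces $T$ on the nose (up to ambient isotopy) and not some variant obtained by inserting a half-twist on the recombined edge, since such a half-twist would invert the surgery on the relevant degree-$k$ piece by Lemma~\ref{inverse}. This boils down to a careful local framing calculation: one has to check that the two parallel edges of $T_1$ and $T_2$ carry compatible normal framings inherited from the normal framing on the edge of $T$, and that the orientations of the half-disks $f_1$ and $f_2$ fit together to reproduce the orientation of $f$. Once the framing conventions are made explicit inside $B$, the enumeration of moves above yields the claimed $C_{k+1}$-equivalence directly.
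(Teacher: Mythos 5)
The paper gives no proof of this lemma at all --- it is quoted from Meilhan's thesis \cite{Meil} as part of the clasper-calculus toolkit --- so the only question is whether your argument stands on its own, and it does not: the crucial step is wrong. You propose to turn $T_1\cup T_2$ into the single tree $T$ by ``a leaf-slide along $a$ (Lemma~\ref{sliding})'' which ``amalgamates $f_1$ and $f_2$ into a single leaf while the two parallel edges collapse onto the single edge of $T$.'' Lemma~\ref{sliding} does nothing of the sort. A leaf slide moves a leaf of one clasper across a leaf of a \emph{different} clasper and changes the surgered link only up to $C_{k+1}$-equivalence; it never changes the number of connected components of the clasper, and neither do the edge-crossing changes of Lemma~\ref{edgecrossing} nor the half-twist cancellation of Lemma~\ref{inverse}. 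No sequence of the moves you invoke can merge the two disjoint $C_k$-trees $T_1$ and $T_2$ into the single tree $T$, so the heart of the lemma is left unproved; the framing/half-twist issue you flag as ``the hard part'' is a secondary matter by comparison.

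The actual content of the lemma is precisely the mechanism you are missing: in a neighborhood of $f$ the boundary $\partial f$ is a band sum of push-offs of $\partial f_1$ and $\partial f_2$, and one must show that surgery on a tree whose leaf is such a band sum agrees, up to $C_{k+1}$-trees, with surgery on the two trees obtained from parallel copies by using each summand separately. This is established by Habiro's calculus of claspers (splitting a leaf by inserting a box, then applying the zip construction), which geometrically realizes the commutator identity $[a,bc]\equiv[a,b]\,[a,c]$ modulo higher commutators; the correction terms produced by the zip construction are exactly the $C_{k+1}$-trees allowed by the statement. Your localization to a ball $B$ around $f$ and the observation that $T_1,T_2$ may be taken parallel outside $B$ are fine as framing, but without the zip/box step (or an equivalent band-sum argument for the leaf) the proof does not go through.
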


\begin{figure}[ht]
$$\raisebox{-28 pt}{\begin{overpic}[bb=0 0 116 126, height=80
pt]{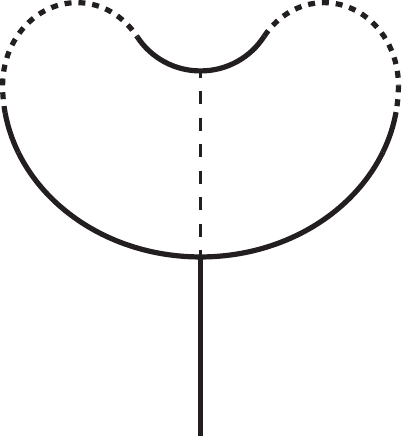}
\put(-12,60){$f$} \put(27,48){$a$} \put(28,-11){ $T$}
\end{overpic}}
  \hspace{0.4cm} \mbox{\large$\longrightarrow$} \hspace{0.6cm}
\raisebox{-28 pt}{\begin{overpic}[bb=0 0 159 126, height=80
pt]{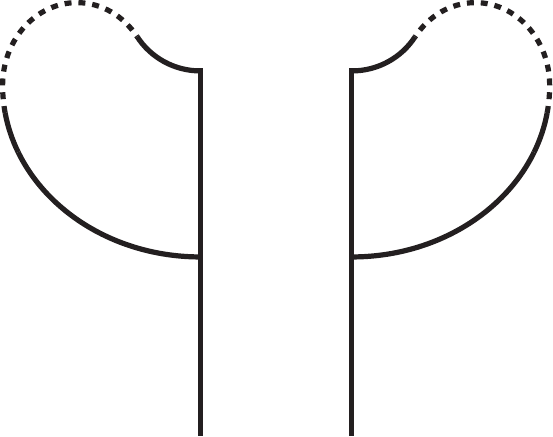}
\put(-13,60){$f_1$} \put(103,60){$f_2$}
\put(39,48){$a$} \put(56,48){$a$}
\put(28,-11){ $T_1$} \put(57,-11){ $T_2$}
\end{overpic}}$$
\caption{Separating of leaf.} \label{separate}
\end{figure}

\begin{lemma}[\cite{FY}] \label{multitree}
A tree clasper whose at least 2 leaves attach the same component of a (string) link vanishes up to link-homotopy. 
In particular, a $C_n$-tree for an $n$-component (string) link vanishes up to link-homotopy.
\end{lemma}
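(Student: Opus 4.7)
The statement splits into two parts: (a) if a tree clasper $T$ for a (string) link $L$ has at least two leaves meeting the same component, then $L_T$ is link-homotopic to $L$; and (b) every $C_n$-tree for an $n$-component (string) link has this property. Part (b) follows immediately from (a) by the pigeonhole principle, since a $C_n$-tree has $n+1$ leaves distributed among only $n$ components, so at least two leaves must share a component. Thus the content is in part (a).

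My plan is to reduce (a), by simple-tree reductions, to the case where the two leaves $f_1, f_2$ lying on the same component $L_j$ are attached to a common trivalent disk of $T$, and then to cancel this configuration modulo strictly ``less essential'' surgeries. First, repeated applications of Lemma \ref{sliding} (leaf slides) let me push $f_1$ along the edges of $T$ toward $f_2$, each slide costing only simple $C_{k+1}$-trees; once $f_1$ and $f_2$ are incident to a single trivalent vertex $v$, the remainder of $T$ hangs off $v$ through a single edge $e$. Second, Lemma \ref{edgecrossing} together with ambient isotopy along $L_j$ lets me bring the two leaves into a small ball that intersects $L_j$ in an arc, so that the pair $(f_1, f_2)$ locally looks like two anti-parallel leaves on the same strand. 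Third, the IHX relation (Lemma \ref{IHX}) applied at the edge $e$, combined with the edge-twist annihilation (Lemma \ref{inverse}), shows that such a locally-adjacent double-leaf configuration is trivial up to simple $C_{k+1}$-trees. A crucial bookkeeping point is that every auxiliary $C_{k+1}$-tree produced along the way inherits a pair of leaves on $L_j$ coming from $f_1$ and $f_2$, so the whole argument is closed under its own hypothesis.

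The main obstacle is precisely closing this recursion, since the steps above increase the clasper degree from $k$ to $k+1$ rather than decreasing it. To handle this, I would run a compound induction: the outer induction is on the tree-distance $d$ in $T$ between $f_1$ and $f_2$, which Lemma \ref{sliding} decreases at the cost of $C_{k+1}$-trees of the same repeated-leaf type; the base case $d=1$ is handled by the IHX/edge-twist cancellation, which also produces only repeated-leaf $C_{k+1}$-trees. Because link-homotopy is generated by the single move of a self-crossing change (a simple $C_1$-tree with both leaves on the same component), the inner ``base of bases'' is the trivial observation that such a $C_1$-surgery is a self-crossing change and hence preserves link-homotopy class. Piecing these inductions together, as in \cite{FY}, yields the lemma.
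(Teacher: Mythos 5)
The paper itself offers no proof of this lemma; it is quoted from \cite{FY}, so your sketch has to be judged against the known argument there. Your reduction of the second sentence to the first by pigeonhole is fine, but the proof you outline for the main statement has two concrete problems. First, Lemma \ref{sliding} does not do what you ask of it: a leaf slide moves a leaf of one clasper over a leaf of a \emph{different, disjoint} clasper along the link; it never changes the combinatorial structure of a single tree, so it cannot decrease the ``tree-distance'' between $f_1$ and $f_2$ inside $T$. Making the two repeated leaves adjacent is done with the IHX relation (Lemma \ref{IHX}), which rewrites $T$ as a product of trees of other shapes (and one must check the repeated-leaf property persists in each term); your outer induction is therefore founded on a misapplied lemma, and your base case is unsupported as well --- IHX together with the half-twist cancellation (Lemma \ref{inverse}) gives no cancellation of a tree whose two adjacent leaves grasp the same component. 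The actual mechanism in \cite{FY} at this point is geometric: when the two adjacent leaves grasp strands of the same component $L_j$, the Brunnian-type tangle produced by surgery along the simple tree can be undone by crossing changes between those strands and the band created by the surgery, and these are \emph{self}-crossing changes of $L_j$; this is the Milnor/reduced-group trick, and it kills the configuration outright, not merely modulo higher-degree trees.

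Second, and decisively, your recursion does not terminate. Every step you describe (leaf slides, edge crossings, IHX, half-twist cancellation) trades a $C_k$-tree for $C_{k+1}$-trees, so the degree only increases; the ``base of bases'' at $C_1$ that you invoke is never reached by a process that ascends in degree, and without a decreasing quantity or an exact cancellation the argument produces an infinite regress rather than a proof. Note also that you cannot appeal to ``$C_m$-trees vanish up to link-homotopy for large $m$'' to cut off the regress, since that is precisely the second sentence of the lemma, which you derive from the first. The way the literature closes the loop is exactly the exact-cancellation step described above: once the two leaves on $L_j$ are adjacent, surgery along that tree is realized by self-crossing changes of $L_j$ (possibly after corrections that again carry two leaves on $L_j$ and are handled by the same exact step), so the induction is on bringing the leaves together, not on degree. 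That step is the heart of \cite{FY} and is missing from your proposal.
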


\subsection{Proof of Corollary \ref{separable2} and Theorem \ref{thm2}}
We prepare a lemma.
\begin{lemma}\label{HBLtri}
Let $\Gamma_0 = \gamma_1 \cup \dots \cup \gamma_n$ be an oriented plane graph whose components are $n$ bouquet graphs where $\gamma_i$ has $g_i$ loop edges. Let $\Gamma= \eta_1 \cup \dots \cup \eta_n$ be an oriented spacial graph whose component $\eta_i$ is an embedded oriented  bouquet graph with $g_i$ loop edges. If any $(n-1)$-component subgraph of $\Gamma$ is component homotopic to the plane graph, then $\Gamma$ is equivalent to $\Gamma_0$ up to $C_{n-1}$-equivalence and component homotopy. 
\end{lemma}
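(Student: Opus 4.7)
The plan is to invoke Habiro's clasper calculus to present $\Gamma$ as $(\Gamma_0)_T$ for a disjoint union $T$ of simple tree claspers, and then use the hypothesis on $(n-1)$-component subgraphs together with Milnor's invariant calculations to eliminate all trees of degree less than $n-1$ modulo component homotopy. First, since each $\eta_i$ is a bouquet, any crossing between two arcs of $\eta_i$ is a self-crossing, so component homotopy lets us bring each $\eta_i$ to its planar model $\gamma_i$. Applying Habiro's theorem then gives $\Gamma = (\Gamma_0)_T$ for a disjoint union $T = T_1 \cup \cdots \cup T_N$ of simple tree claspers, and Lemma~\ref{multitree} allows us, after a further component homotopy, to assume that the leaves of each $T_l$ lie on pairwise distinct components of $\Gamma_0$.

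I would then proceed by induction on $k$ from $1$ up to $n-2$, showing at each stage that modulo $C_{k+1}$-equivalence and component homotopy all remaining $C_k$-trees in $T$ can be removed. The essential ingredient is the claim, assembled from Lemmas~\ref{edgecrossing}, \ref{sliding}, \ref{inverse}, and \ref{IHX}, that the simple $C_k$-tree surgeries with leaves on pairwise distinct loop edges of $\Gamma_0$ form, modulo $C_{k+1}$-equivalence and component homotopy, an abelian group that is injectively detected by Milnor's $\mu$-invariants of length $k+1$: by Lemma~\ref{MIlnorproperty} each standard $C_k$-tree contributes $\pm 1$ to a unique $\mu$-invariant, and Lemma~\ref{additivity} (which applies because the lower-degree trees have already been eliminated by the inductive hypothesis) ensures that the contributions of different trees in $T$ add.

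Any $\mu$-invariant of $\Gamma$ of length $m \leq n-1$ depends only on $m$ components and is therefore computable from an $(n-1)$-component subgraph $\Gamma|_S$ containing them; by hypothesis this subgraph is component homotopic to the planar graph $\Gamma_0|_S$, so the invariant vanishes. Hence all $\mu$-invariants of $\Gamma$ of length at most $n-1$ vanish, and combining this with the detection result above shows that at each inductive stage the total $C_k$-tree contribution in $T$ is zero modulo $C_{k+1}$-equivalence and component homotopy. Iterating the elimination for $k = 1, 2, \ldots, n-2$ removes every tree of degree below $n-1$, leaving a presentation of $\Gamma$ obtained from $\Gamma_0$ by $C_{n-1}$-tree surgeries alone, which is exactly the stated conclusion.

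The hard part is the injective detection step: one has to verify that, in the bouquet-graph setting where leaves may be distributed on different loop edges within a component, the IHX, edge-crossing-change, and leaf-slide moves (Lemmas~\ref{IHX}, \ref{edgecrossing}, \ref{sliding}) combined with half-twist inversion (Lemma~\ref{inverse}) really do reduce an arbitrary collection of $C_k$-trees to the standard form whose $\mu$-invariants are given by Lemma~\ref{MIlnorproperty}, and that this reduction respects component homotopy rather than the link homotopy used in the classical string-link version of the argument. The rest is then a bookkeeping exercise combining additivity of $\mu$ with the vanishing forced by the hypothesis.
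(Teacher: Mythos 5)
Your proposal follows essentially the same route as the paper's proof: present $\Gamma$ as clasper surgery on $\Gamma_0$, then inductively eliminate all trees of degree less than $n-1$ modulo higher $C_k$-equivalence and component homotopy, using the vanishing of all Milnor invariants of length at most $n-1$ (forced by the hypothesis on $(n-1)$-component subgraphs) together with Lemmas \ref{edgecrossing}, \ref{sliding}, \ref{inverse}, \ref{IHX} and \ref{multitree}. The "injective detection" step you flag as the hard part is treated at the same level of detail in the paper itself, so your outline matches the published argument.
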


\begin{proof}
Let $e^i_j$ be the $j$-th edge of $\eta_i$ ($1 \leq j \leq g_i$).
By the assumption of $\Gamma$, for any $I$ with length less than $n$,
$\overline{\mu}_{e^1_{k_1} \cup \dots \cup e^n_{k_n}}(I)=0$
for any $k_1, \dots ,k_n$.
There exists a disjoint union $T_1$ of $C_1$-trees for $\Gamma_0$ such that $\Gamma$ is ambient isotopic to $(\Gamma_0)_{T_1}$, because a $C_1$-tree corresponds to a crossing change.
From the above vanishing condition of $\Gamma$, we can transform these trees of $T_1$ by using Lemma~\ref{edgecrossing}, \ref{sliding} and \ref{IHX} so that all $C_1$-trees with attaching two different components vanish by Lemma \ref{inverse}.
Therefore, by Lemma \ref{multitree}, we obtain a disjoint union $T_2$ of $C_i$-trees ($i \geq 2$) up to component homotopy  and $(\Gamma_0)_{T_1}$ is component homotopic to $(\Gamma_0)_{T_2}$.
Similarly, by the above vanishing condition and Lemma~\ref{edgecrossing}, \ref{sliding}, \ref{inverse}, \ref{IHX} and \ref{multitree}, we can construct a sequence $T_3, T_4, \dots , T_{n-1}$ of disjoint unions of tree claspers so that each tree of $T_i$ is a $C_k$-tree ($k \geq i$) and $(\Gamma_0)_{T_1}$ is component homotopic to $(\Gamma_0)_{T_i}$.
Therefore $\Gamma$ is equivalent to $\Gamma_0$ up to component homotopy and $C_{n-1}$-equivalence.
\end{proof}

We prove Corollary \ref{separable2}.
\begin{proof}[Proof of Corollary \ref{separable2}]
A sufficient condition is trivial by Proposition \ref{general invariant}.
We consider a necessary condition.  
Suppose that $T_H(I)=0$ for any $I$.
Then, for any basis $\mathcal{B}=\{e^1_1, \dots, e^1_{g_1}, \dots, e^n_1, \dots, e^n_{g_n}\}$ of $H_1(H; \mathbb{Z})$, $\overline{\mu}_{e^1_{k_1}\cup \dots \cup e^n_{k_n} }(I)=0$ for any $I$ and  $k_1, \dots ,k_n$. For a fixed basis $\mathcal{B}$, we give an oriented bouquet presentation $\Gamma$ with respect to $\mathcal{B}$. Let $\Gamma_0$ be a plane graph which is a union of $n$ oriented 
bouquet graphs. Similar way to Lemma \ref{HBLtri}, we have a disjoint union $T_n$ of $C_k$-trees ($k \geq n$) and $\Gamma$ is component homotopic to $(\Gamma_0)_{T_n}$. Since $(\Gamma_0)_{T_n}$ is component homotopic $\Gamma_0$, $\Gamma$ is component homotopic to $\Gamma_0$. By Proposition \ref{HandB}, $H$ is HL-homotopic to the trivial handlebody-link.
\end{proof}

\par
To prove Theorem \ref{thm2}, we use $(n-2)!$ kinds of simple $C_{n-1}$-trees for spacial graphs whose shapes are as in Figure \ref{CnCla} left, where \fbox{$\sigma_p$} is the positive braid corresponding to the permutation $\sigma_p$ such that every pair of strings crosses at most one. 
\begin{figure}[ht]
$$ \raisebox{-45 pt}{\begin{overpic}[bb=0 0 226 211, height=100 pt]{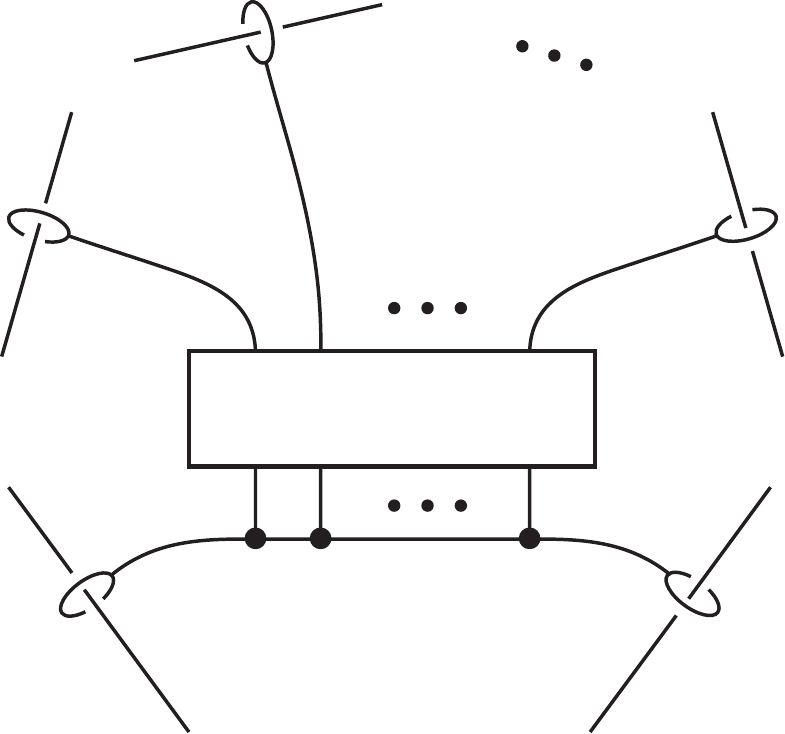}
 \put(-7,24){1} \put(-2,81){2} \put(48,105){$3$} \put(104,80){$n\!-\!1$} \put(107,23){$n$}
 \put(51, 43){$\sigma_p$}
\end{overpic}} 
\hspace{0.8cm}\mbox{\LARGE$\rightarrow$}\hspace{0.8cm}
\raisebox{-37 pt}{\begin{overpic}[bb=0 0 141 129, height=95 pt]{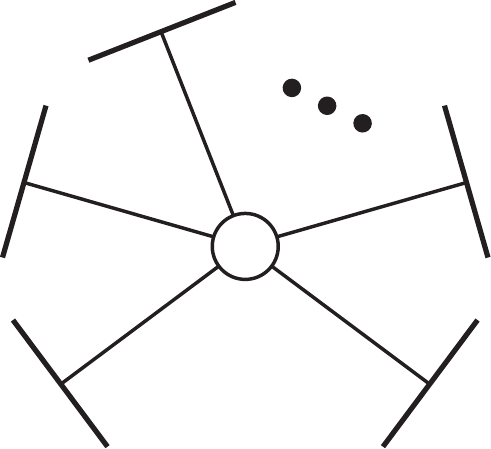}
 \put(-5,18){1} \put(-3,70){2} \put(46,100){$3$} \put(100,68){$n\!-\!1$} \put(104,21){$n$}
 \put(49, 41){$p$}
\end{overpic}}$$
\caption{$C^{(p)}_{n-1}$-tree.} \label{CnCla}
\end{figure}
We call the simple $C_{n-1}$-tree derived from $\sigma_p$ a $C^{(p)}_{n-1}$-tree. By Lemma~\ref{IHX} and other lemmas (Lemma~\ref{sliding}, \ref{inverse} and \ref{multitree}), we can transform any $C_{n-1}$-tree to some $C^{(p)}_{n-1}$-trees up to HL-homotopy. 
For simplicity, we represent the $C^{(p)}_{n-1}$-tree as in Figure \ref{CnCla} right. We also abbreviate parallel $C^{(p)}_{n-1}$-trees as in Figure \ref{ParCnCla}, where a bottom lines is an edges of the $n$-th component, $\oplus$ means a half-twist of an edge and we consider the edge orientations of graphs.
\begin{figure}[ht]
$$
\raisebox{-61 pt}{\begin{overpic}[bb=0 0 242 243, height=117 pt]{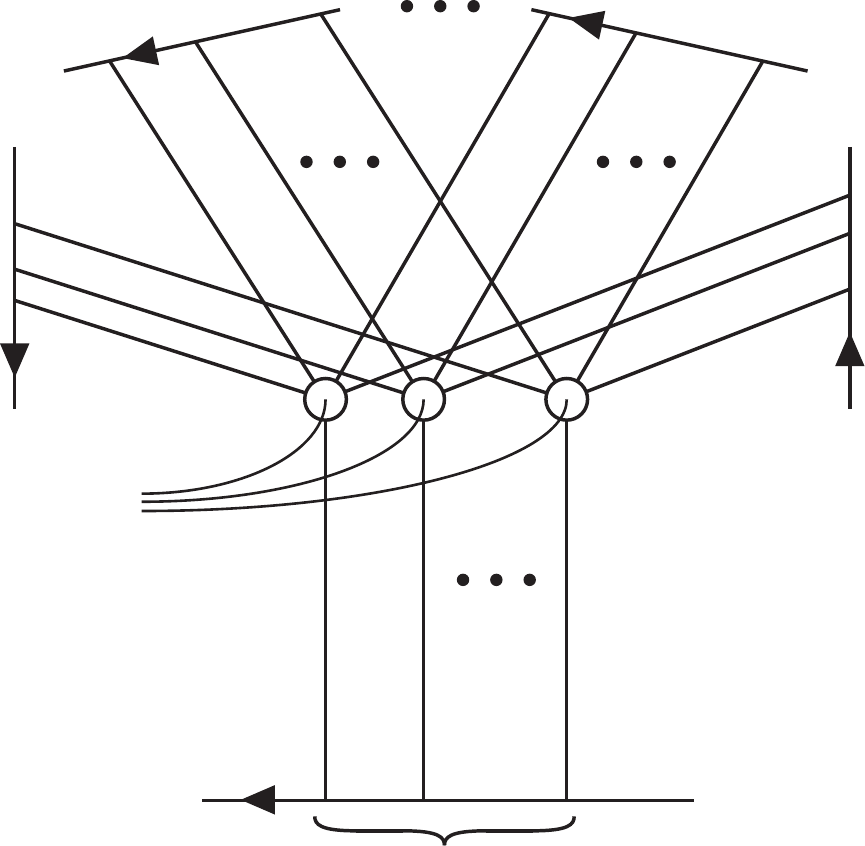}
 \put(-2,47){1} \put(8,113){2} \put(103,113){$n\!-\!2$} \put(106,50){$n\!-\!1$} \put(101, 4){$n$}
 \put(59, -10){$r$}  \put(11, 45){$p$}
\end{overpic}} 
\hspace{0.5cm}\mbox{\LARGE$\rightarrow$}\hspace{0.5cm}
\raisebox{-55 pt}{\begin{overpic}[bb=0 0 222 226, height=110 pt]{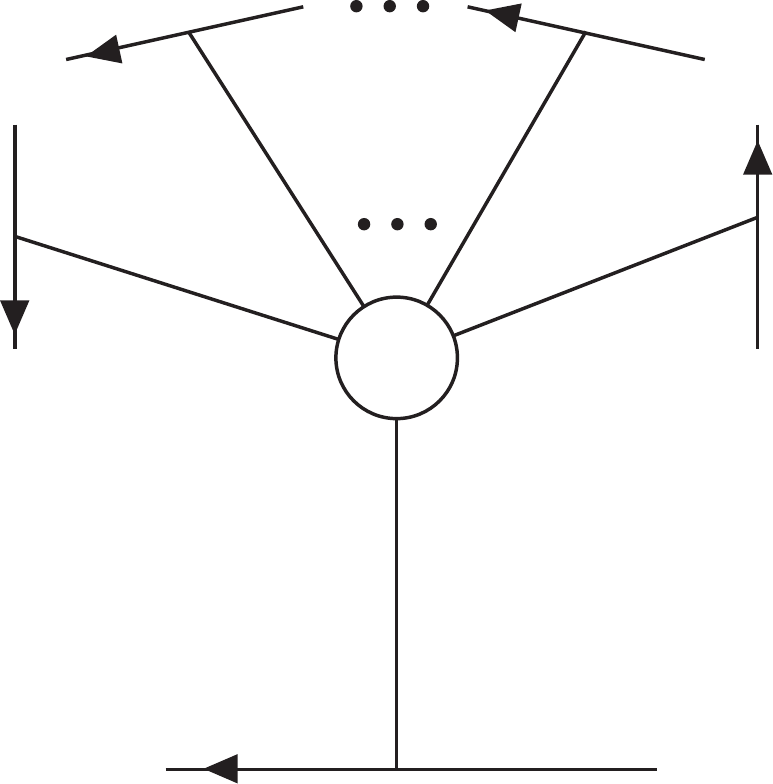}
 \put(-1,46){1} \put(6,108){2} \put(90,108){$n\!-\!2$} \put(94,48){$n\!-\!1$} \put(96,-1){$n$}
 \put(53, 58){$p$}  \put(41, 46){$r$}
\end{overpic}}
$$
\vspace{1.0cm}
$$
\raisebox{-61 pt}{\begin{overpic}[bb=0 0 249 243, height=117 pt]{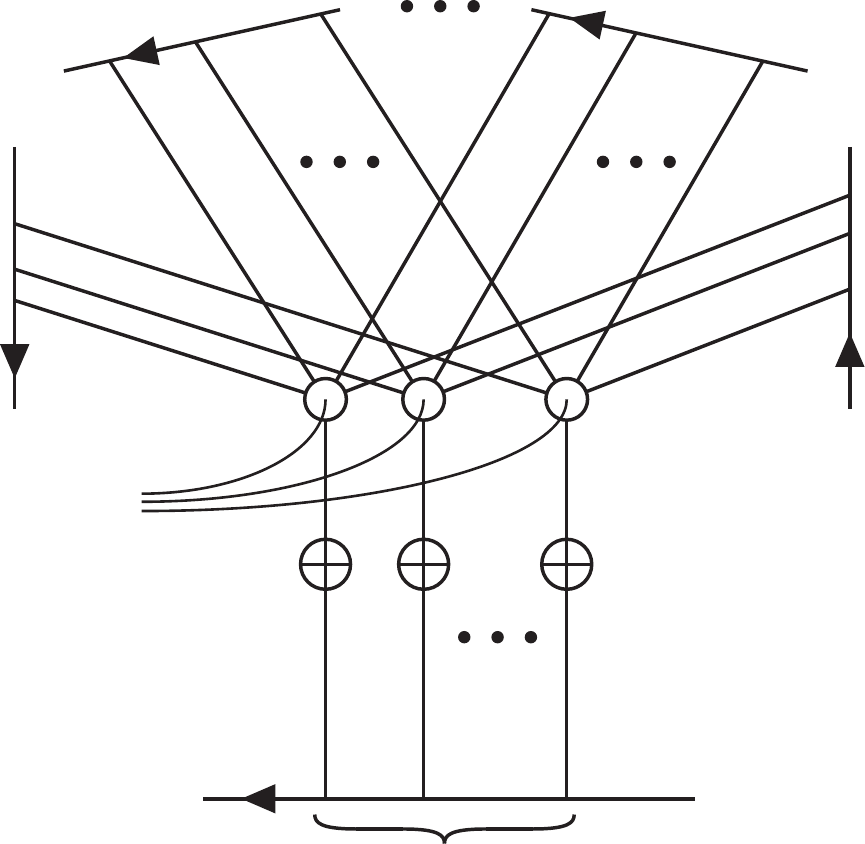}
 \put(-2,47){1} \put(8,113){2} \put(103,113){$n\!-\!2$} \put(106,50){$n\!-\!1$} \put(101, 4){$n$}
 \put(59, -10){$r$}  \put(11, 45){$p$}
\end{overpic}} 
\hspace{0.5cm}\mbox{\LARGE$\rightarrow$}\hspace{0.5cm}
\raisebox{-55 pt}{\begin{overpic}[bb=0 0 222 226, height=110 pt]{n-clasper07-2.pdf}
 \put(-1,46){1} \put(6,108){2} \put(90,108){$n\!-\!2$} \put(94,48){$n\!-\!1$} \put(96,-1){$n$}
 \put(53, 58){$p$} \put(31, 46){$-r$}
\end{overpic}}
$$
\vspace{0.1cm}
\caption{Parallel $C^{(p)}_{n-1}$-trees (upper) and parallel half-twisted $C^{(p)}_{n-1}$-trees (bottom).} \label{ParCnCla}
\end{figure}
\par
From the lemmas in Subsection \ref{sec8}, the local moves for $C^{(p)}_{n-1}$-trees in Figure \ref{MoCnCla} hold up to a $C_n$-equivalence, where $p$ and $q$ are integers and the horizontal line in (1) is an edge of a graph or a simple $C_{n-1}$-tree. 
The move (1) is to pass an edge trough an edge of a $C^{(p)}_{n-1}$-tree. The move (2) is to exchange of leafs. The move (3) is to move a half twist from an edge to an adjacent edge. The move (4) is to cancel two half twists on the same edge. The move (5) is to cancel of non-twisted and half-twisted $C^{(p)}_{n-1}$-trees in parallel. The move (6) is to erase a $C^{(p)}_{n-1}$-tree whose two leaves are on the same component. In the move (7), by sliding a leaf of a $C^{(p)}_{n-1}$-tree over a trivalent vertex, it changes to two copys of the $C^{(p)}_{n-1}$-tree. Move (1) is derived from Lemma \ref{edgecrossing}, (2) from Lemma \ref{sliding}, (3), (4) and (5) from Lemma \ref{inverse}, (6) from Lemma  \ref{multitree} and (7) from Lemma \ref{sepa}. We remark that the $C^{(p)}_{1}$-tree and the $C^{(p)}_{2}$-tree are equal to the Hopf chord the Borromean chord in \cite{TY}. The moves for the chords are proved schematically there.

\par
\begin{figure}[ht]
$$(1) \hspace{0.2cm} \raisebox{-20 pt}{\begin{overpic}[bb=0 0 142 153, height=50 pt]{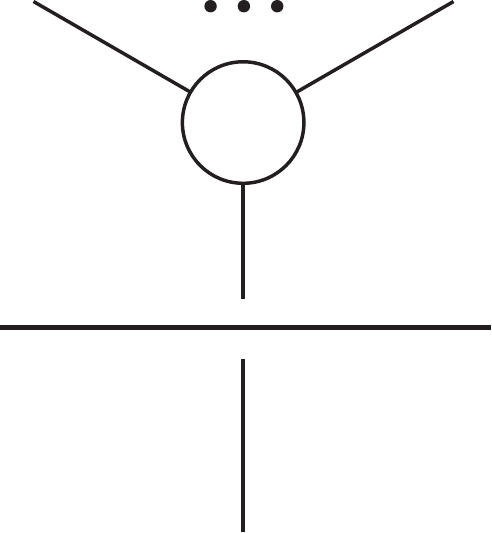}
\put(21,39){$_p$}
\end{overpic}} 
\hspace{0.3cm}\mbox{\LARGE$\leftrightarrow$}\hspace{0.3cm}
\raisebox{-20 pt}{\begin{overpic}[bb=0 0 142 153, height=50 pt]{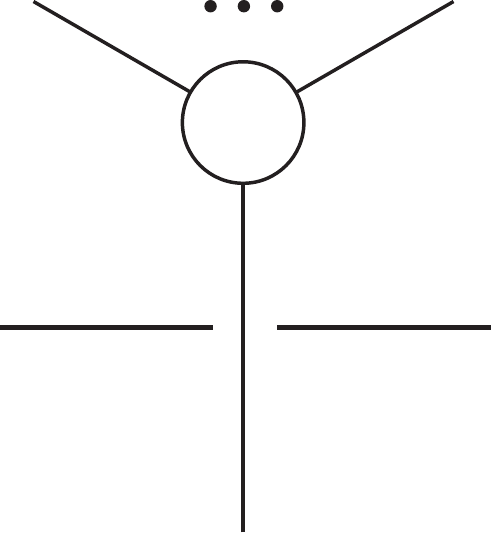}
\put(21,39){$_p$}
\end{overpic}} \hspace{0.3cm}
(2) \hspace{0.2cm} \raisebox{-20 pt}{\begin{overpic}[bb=0 0 313 154, height=50 pt]{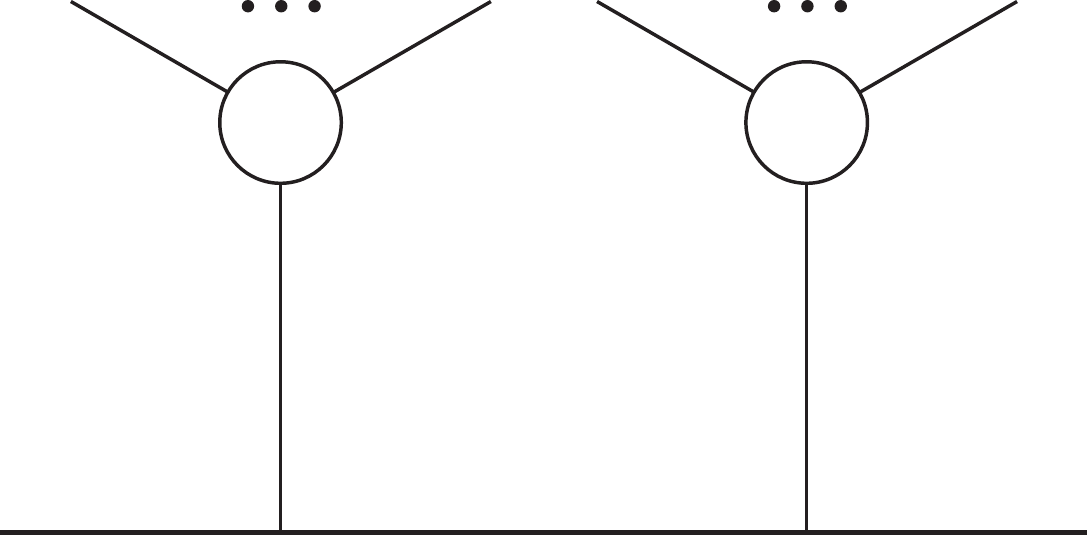}
\put(24,39){$_p$} \put(73,39){$_q$}
\end{overpic}} 
\hspace{0.3cm}\mbox{\LARGE$\leftrightarrow$}\hspace{0.3cm}
\raisebox{-20 pt}{\begin{overpic}[bb=0 0 313 154, height=50 pt]{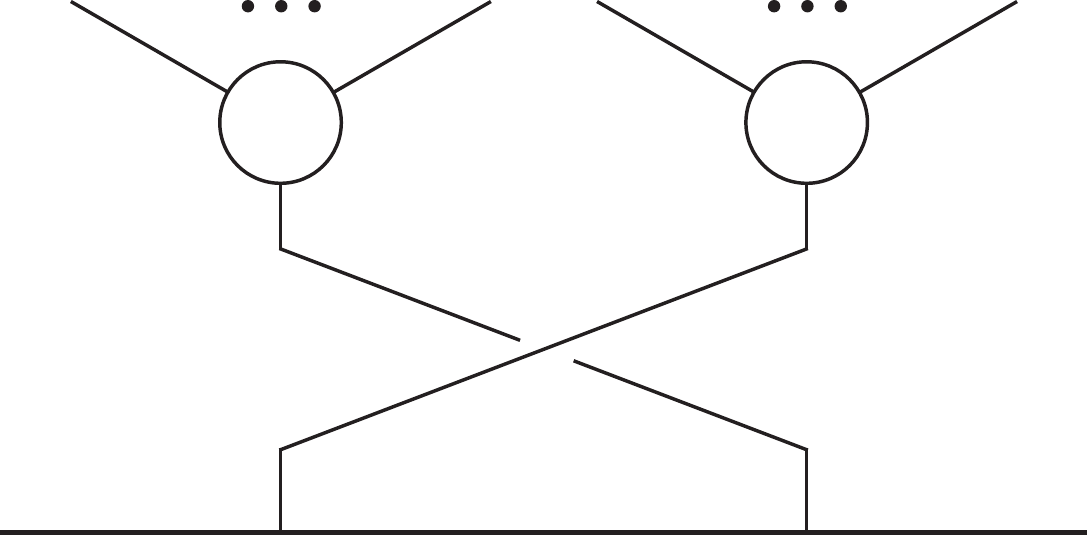}
\put(24,39){$_p$} \put(73,39){$_q$}
\end{overpic}} $$
\vspace{0.3cm}
$$ (3)  \hspace{0.2cm} \raisebox{-20 pt}{\begin{overpic}[bb=0 0 90 90, height=50 pt]{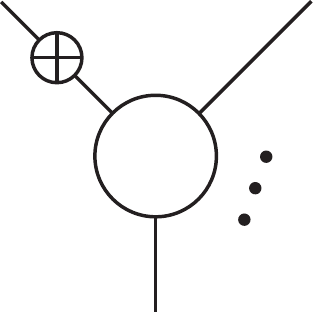}
\put(23,24){$p$}
\end{overpic}} 
\hspace{0.3cm}\mbox{\LARGE$\leftrightarrow$}\hspace{0.3cm}
\raisebox{-20 pt}{\begin{overpic}[bb=0 0 90 90, height=50 pt]{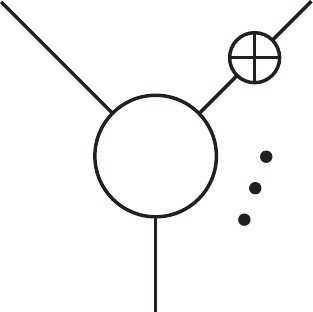}
\put(23,23){$p$}
\end{overpic}} \hspace{0.6cm}
(4) \hspace{0.4cm} \raisebox{-24 pt}{\begin{overpic}[bb=0 0 121 153, height=55 pt]{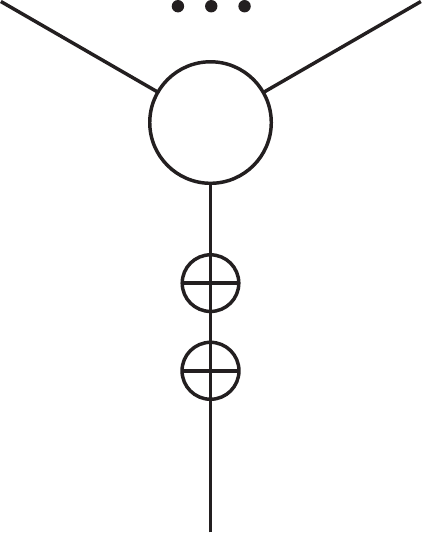}
\put(20,43){$_p$}
\end{overpic}} 
\hspace{0.4cm}\mbox{\LARGE$\leftrightarrow$}\hspace{0.4cm}
\raisebox{-24 pt}{\begin{overpic}[bb=0 0 121 153, height=55 pt]{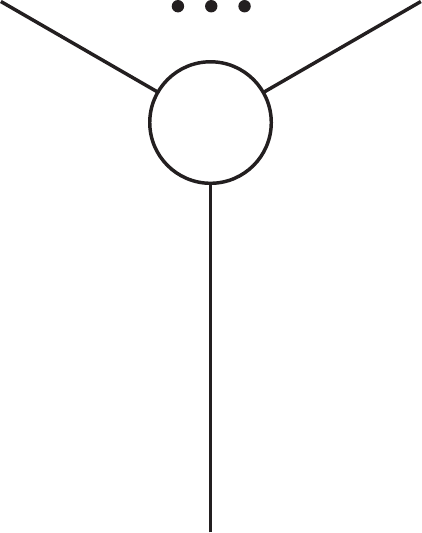}
\put(20,43){$_p$}
\end{overpic}} $$
\vspace{0.3cm}
$$
(5)  \hspace{0.3cm} \raisebox{-30 pt}{\begin{overpic}[bb=0 0 215 232, height=80 pt]{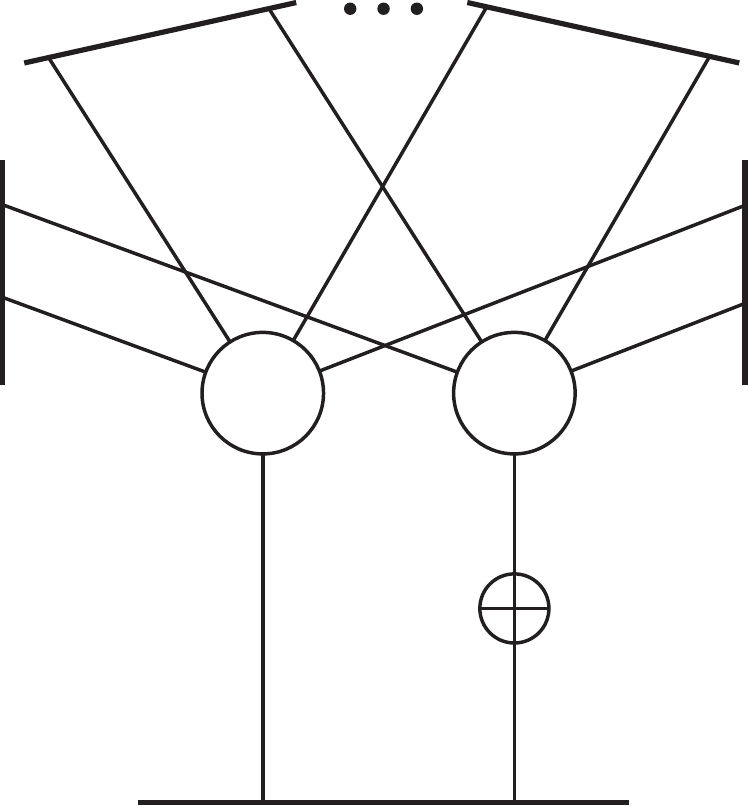}
\put(24,42){$_p$} \put(49,42){$_p$}
\end{overpic}} 
\hspace{0.5cm}\mbox{\LARGE$\leftrightarrow$}\hspace{0.5cm}
\raisebox{-30 pt}{\begin{overpic}[bb=0 0 215 232, height=80 pt]{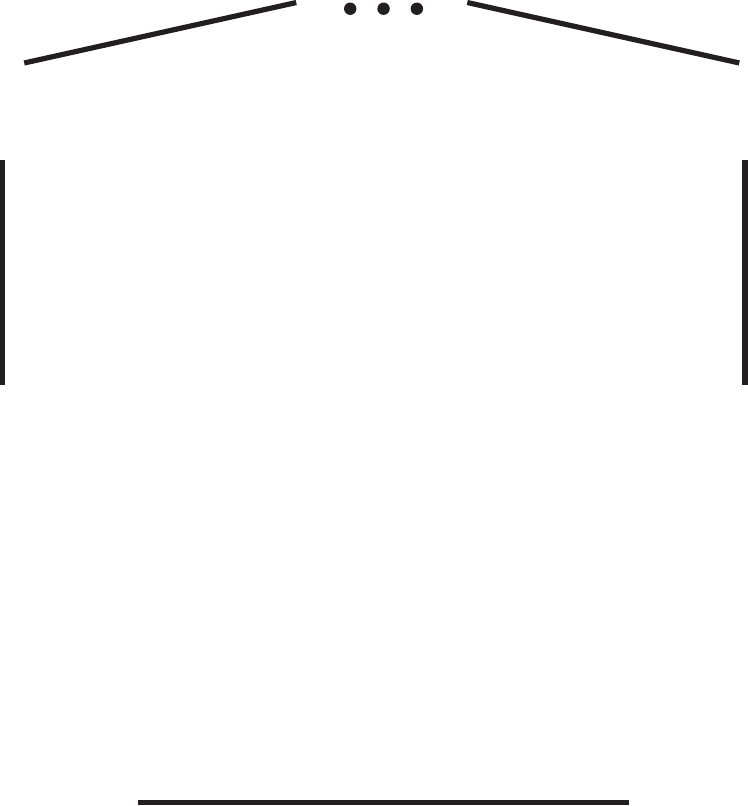}
\end{overpic}}
\hspace{0.3cm}
(6) \hspace{0.4cm} \raisebox{-20 pt}{\begin{overpic}[bb=0 0 159 158, height=65 pt]{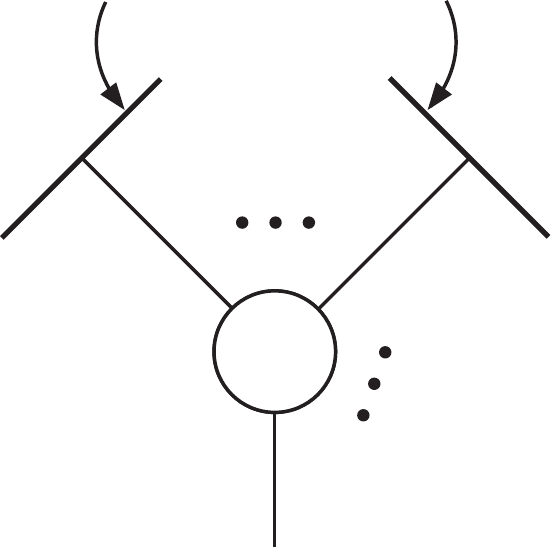}
\put(-7,70){\small same component} \put(31,24){$_p$}
\end{overpic}} 
\hspace{0.5cm}\mbox{\LARGE$\leftrightarrow$}\hspace{0.5cm}
\raisebox{17 pt}{\begin{overpic}[bb=0 0 159 69, height=28 pt]{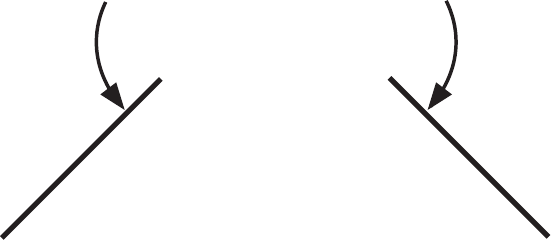}
\put(-7,33){\small same component}
\end{overpic}} 
\hspace{0.6cm}
$$
\vspace{0.5cm}
$$
(7) \hspace{0.4cm} \raisebox{-32 pt}{\begin{overpic}[bb=0 0 215 236, height=85 pt]{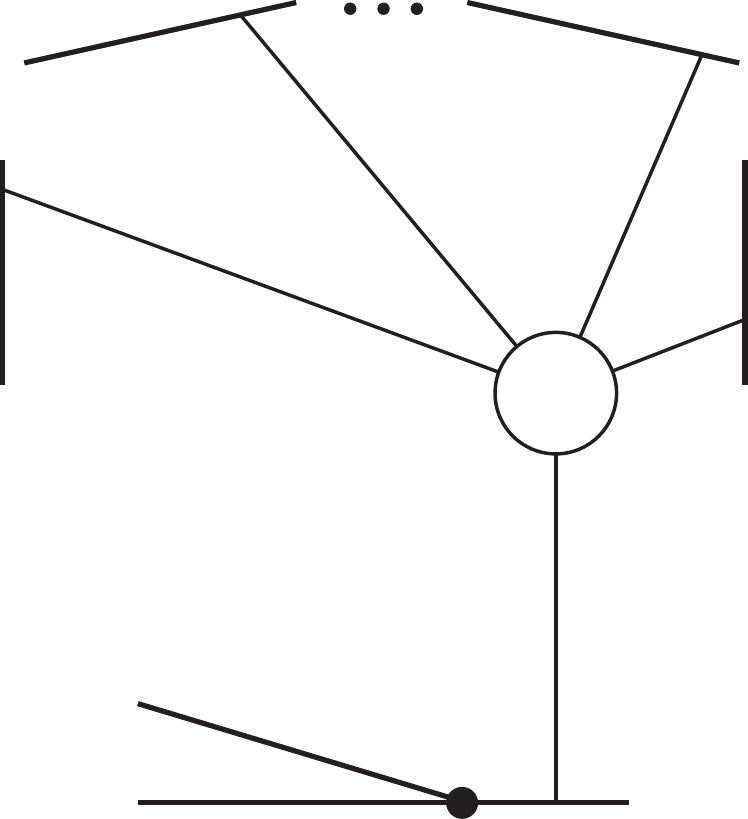}
\put(56,45){$_p$}
\end{overpic}} 
\hspace{0.5cm}\mbox{\LARGE$\leftrightarrow$}\hspace{0.5cm}
\raisebox{-32 pt}{\begin{overpic}[bb=0 0 215 236, height=85 pt]{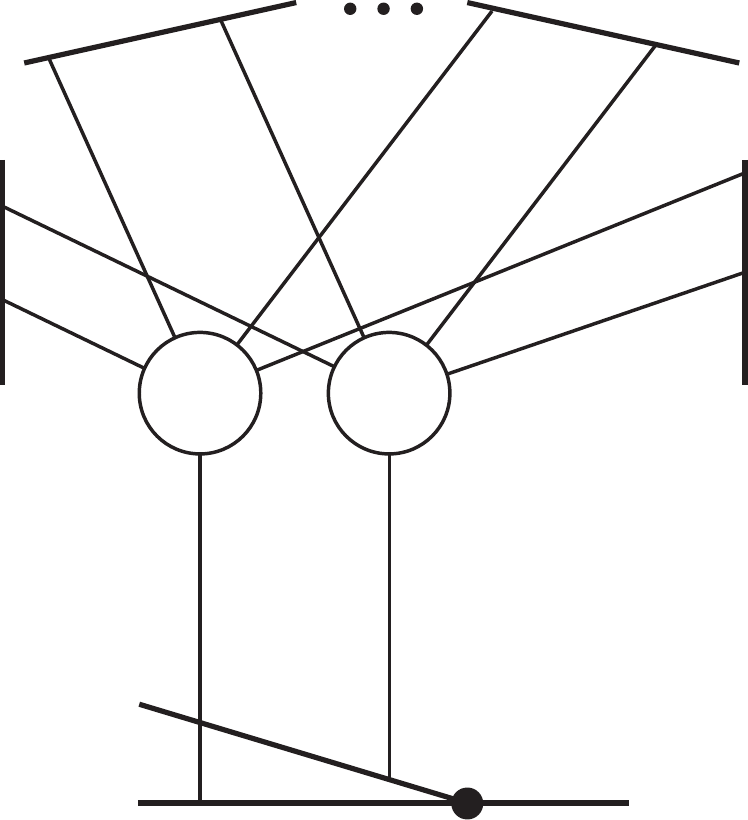}
\put(19,45){$_p$} \put(39,45){$_p$}
\end{overpic}} $$
\caption{Moves of $C^{(p)}_{n-1}$-trees.} \label{MoCnCla}
\end{figure}

\par
Let $H = L_1 \cup \dots \cup L_n$ be an $n$-component almost trivial handlebody-link. Fix a basis $\{e^i_1, \dots, e^i_{g_i} \}$ of $H_1(L_i; \mathbb{Z})$ where $g_i$ is the genus of $L_i$. This determines a basis $\mathcal{B}=\{e^1_1, \dots, e^1_{g_1}, \dots, e^n_1,\dots, e^n_{g_n}\}$ of $H_1(H; \mathbb{Z})$. 
Let $\Gamma_0 = \gamma_1 \cup \dots \cup \gamma_n$ be an oriented plane graph whose components $\gamma_i$ are bouquet graphs with $g_i$ loop edges. Let $\Gamma$ be an oriented bouquet graph presentation of $H$ whose loop edges represent $\mathcal{B}$.
From Lemma \ref{HBLtri}, $\Gamma_0$ is equivalent to $\Gamma$ up to $C_{n-1}$-equivalence and HL-homotopy, and $\Gamma$ is HL-homotopic to a graph $\Gamma'$ obtained by attaching some simple $C_{n-1}$-trees to $\Gamma_0$. 
We assume that the $C_{n-1}$-trees of $\Gamma'$ are all $C_{n-1}^{(p)}$-trees. 
Using the moves (1)-(6) in Figure \ref{MoCnCla} we modify $\Gamma'$ to a canonical form $\Gamma_{\mathcal{B}}$ satisfying that
\begin{enumerate}
\item[(a)] Any $C^{(p)}_{n-1}$-tree connects edges of $n$ different components.
\item[(b)] $C^{(p)}_{n-1}$-trees which connect the same $n$ edges are arranged parallel as in Figure \ref{ParCnCla}, where the bottom line is the $n$-th component, and they are all non-twisted or half-twisted. 
\end{enumerate} 
We remark that by (1) in Figure \ref{MoCnCla}, we do not have to consider over or under information of crossing between two $C_{n-1}^{(p)}$-trees. 
Let $f^i_j$ be an edge of $\gamma_i$ which corresponds to $e^i_j$ in $\Gamma_{\mathcal{B}}$. 
By Lemma~\ref{MIlnorproperty} and \ref{additivity}, the number $r$ of $C^{(p)}_{n-1}$-trees connecting $f^1_{k_1}, \dots, f^n_{k_n}$ is equal to $\overline{\mu}_{e^1_{k_1}\cup \dots \cup e^n_{k_n}}(I_p)$.
\begin{proof}[Proof of Theorem \ref{thm2}] 
\par
The well-definedness of $\varphi$ comes from that of $t_{H}$. 
We construct the inverse map of $\varphi$. Let $t=(t_1, \dots, t_{(n-2)!})$ be an $(n-2)!$-tuple of tensors $t_p$ in  ${\mathbb{Z}}^{m_{1}} \otimes \dots \otimes {\mathbb{Z}}^{m_{n}}$, where $t_p= \sum^{m_1,\dots, m_n}_{k_1, \dots, k_n=1} a^p_{k_1, \dots, k_n} \overline{\boldsymbol{e}}_{{k_1}}^{1} \otimes \dots \otimes \overline{\boldsymbol{e}}_{{k_n}}^{n}$. Let $\Gamma_0$ be an $n$-component oriented plane graph whose components are bouquet graphs $\gamma_i$, where the number of $\gamma_i$'s edges are $m_i$. Let $f^i_j$ be the $j$-th edge of $\gamma_i$. Let $\Gamma_T$ be $\Gamma_0$ with $a^p_{k_1\dots k_n}$ parallel $C^{(p)}_{n-1}$-trees attached to $n$ edges $f^1_{k_1}, \dots, f^n_{k_n}$ for every $p$. 
Then we define a map 
$$\psi: \overline{X}_n \rightarrow  \overline{W}_n; [t] \mapsto [\Gamma_t],$$
where $[t]$ is the element of $\overline{X}_n$ represented by $t$ and $[\Gamma_t]$ is the HL-homotopy class represented by $\Gamma_t$.  
We show the well-definedness of $\psi$. 
The action $\rho$ is generated by the three kinds of elements in $GL(m_{1}, \mathbb{Z}) \times  \dots \times GL(m_{n}, \mathbb{Z})$:
\begin{itemize}
\item[(i)] $(E, \dots, \overset{i}{\check{P(l,h)}} ,\dots, E)$, where $P(l,h)$ is a matrix obtained from the identity matrix by the swapping row $l$ and row $h$.
\item[(ii)] $(E, \dots, \overset{i}{\check{Q(l)}} ,\dots, E)$, where $Q(l)$ is a matrix obtained from the identity matrix by changing 1 in the $l$-th position to $-1$.
\item[(iii)] $(E, \dots, \overset{i}{\check{R(l, h)}} ,\dots, E)$, where $R(l,h)$ is a matrix obtained from identity matrix by adding 1 in the $(l,h)$ position.
\end{itemize}
We focus on the case $i=n$. 
By the diagonal actions of (i), (ii) and (iii) for $t$, the image of $t$ changes as in Figure \ref{TorCla}, where we focus on only $C^{(p)}_{n-1}$-trees and omit the number $p$. 
\begin{figure}[ht]
$$  \hspace{0.2cm}\hspace{-1.2cm}(\mbox{i})\hspace{1.2cm} \hspace{0.5cm}  \hspace{1.5cm}\raisebox{-50 pt}{\begin{overpic}[bb=0 0 239 236, height=110 pt]{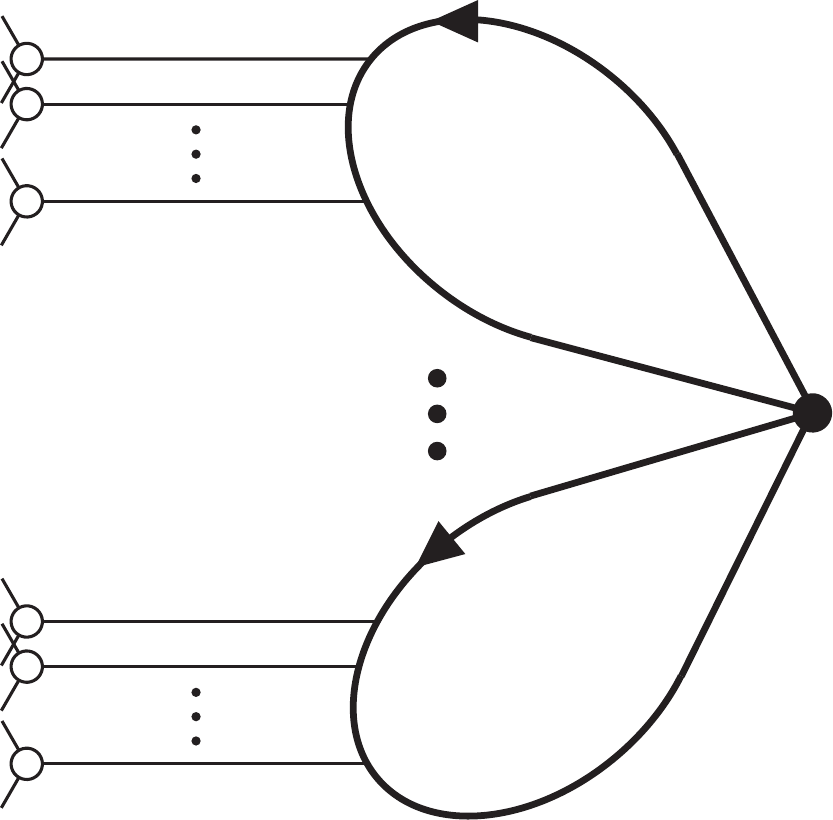}
\put(-33,105){$a^p_{1\dots1l}$} \put(-22,88){$\vdots$} \put(-43,79){$a^p_{g_1 \dots g_{n\!-\!1} l}$}
\put(-33,30){$a^p_{1\dots1h}$} \put(-22,13){$\vdots$} \put(-43,5){$a^p_{g_1 \dots g_{n\!-\!1} h}$}
\put(70, 80){$f^n_l$} \put(70, 22){$f^n_h$}
\end{overpic}} 
\hspace{0.5cm}\mbox{\LARGE$\rightarrow$}\hspace{0.5cm}
 \hspace{1.5cm}\raisebox{-50 pt}{\begin{overpic}[bb=0 0 239 236, height=110 pt]{Bouquet02-2.pdf}
\put(-33,105){$a^p_{1\dots1l}$} \put(-22,88){$\vdots$} \put(-43,79){$a^p_{g_1 \dots g_{n\!-\!1} l}$}
\put(-33,30){$a^p_{1\dots1h}$} \put(-22,13){$\vdots$} \put(-43,5){$a^p_{g_1 \dots g_{n\!-\!1} h}$}
\put(70, 22){$f^n_l$} \put(70, 80){$f^n_h$}
\end{overpic}} $$
\vspace{0.8cm}
$$ (\mbox{ii}) \hspace{0.5cm}  \hspace{1.4cm}\raisebox{-16 pt}{\begin{overpic}[bb=0 0 272 85, height=45 pt]{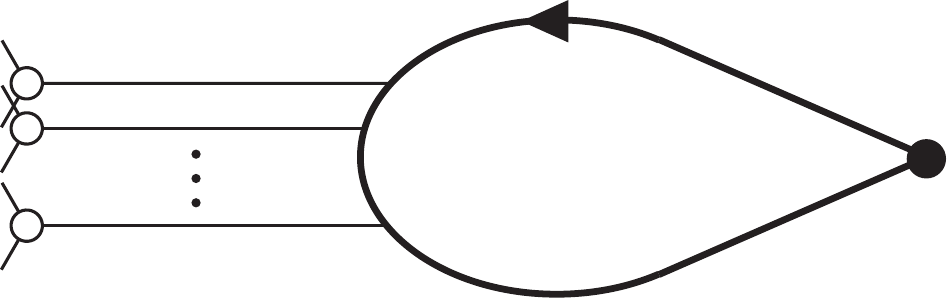}
\put(-32, 34){$a^p_{1\dots1l}$} \put(-21,17){$\vdots$} \put(-42,9){$a^p_{g_1 \dots g_{n\!-\!1} l}$}
\put(83,19){$f^n_l$}
\end{overpic}} 
\hspace{0.5cm}\mbox{\LARGE$\rightarrow$}\hspace{0.5cm}
 \hspace{1.4cm}\raisebox{-16 pt}{\begin{overpic}[bb=0 0 272 85, height=45 pt]{Bouquet01-2.pdf}
\put(-40, 34){$-a^p_{1\dots1l}$} \put(-22,17){$\vdots$} \put(-50,9){$-a^p_{g_1 \dots g_{n\!-\!1} l}$}
\put(83,19){$f^n_l$}
\end{overpic}}$$ 
\vspace{0.8cm}
$$
 \hspace{0.8cm}\hspace{-0.7cm}(\mbox{iii}) \hspace{0.7cm} \hspace{0.5cm}  \hspace{1.7cm}\raisebox{-50 pt}{\begin{overpic}[bb=0 0 239 236, height=110 pt]{Bouquet02-2.pdf}
\put(-33,105){$a^p_{1\dots1l}$} \put(-22,88){$\vdots$} \put(-43,79){$a^p_{g_1 \dots g_{n\!-\!1} l}$}
\put(-33,30){$a^p_{1\dots1h}$} \put(-22,13){$\vdots$} \put(-43,5){$a^p_{g_1 \dots g_{n\!-\!1} h}$}
\put(70, 80){$f^n_l$} \put(70, 22){$f^n_h$}
\end{overpic}} 
\hspace{0.5cm}\mbox{\LARGE$\rightarrow$}\hspace{0.5cm}
 \hspace{3.0cm}\raisebox{-50 pt}{\begin{overpic}[bb=0 0 239 236, height=110 pt]{Bouquet02-2.pdf}
\put(-79,104){$a^p_{1\dots1l}\!+\!a^p_{1\dots1h}$} \put(-50,89){$\vdots$} \put(-95,78){$a^p_{g_1 \dots g_{n\!-\!1} l}\!+\!a^p_{g_1 \dots g_{n\!-\!1} h}$}
\put(-33,30){$a^p_{1\dots1h}$} \put(-22,13){$\vdots$} \put(-43,5){$a^p_{g_1 \dots g_{n\!-\!1} h}$}
\put(70, 80){$f^n_l$} \put(70, 22){$f^n_h$}
\end{overpic}}
$$
\caption{Change of $\Gamma_M$ by the elementary transformations on $M$.} \label{TorCla}
\end{figure}
The transformation (i) is realized by exchanging the order of two elements of the basis. The other two transformations are realized by ambient isotopies, reversing edge-orientations, edge-slides and moves in Lemma \ref{MoCnCla} as in Figure \ref{ReTraCla}. Especially, the move (7) is used in the first move of (iii). Therefore $\psi$ is well-defined. It is obvious that $\psi$ is the inverse map of $\varphi$. 
\begin{figure}[ht]
$$ (\mbox{ii}) \hspace{0.5cm}  \hspace{1.0cm}\raisebox{-16 pt}{\begin{overpic}[bb=0 0 272 85, height=45 pt]{Bouquet01-2.pdf}
\put(-32, 34){$a^p_{1\dots1l}$} \put(-21,17){$\vdots$} \put(-42,9){$a^p_{g_1 \dots g_{n\!-\!1} l}$}
\end{overpic}} 
\hspace{0.5cm}\mbox{\LARGE$\rightarrow$}\hspace{0.5cm}
 \hspace{1.0cm}\raisebox{-18 pt}{\begin{overpic}[bb=0 0 272 85, height=45 pt]{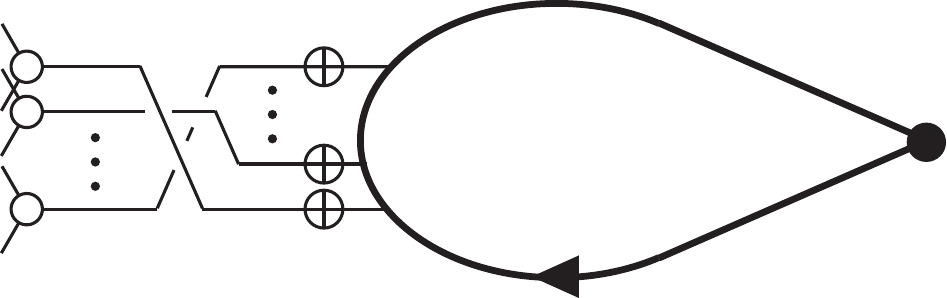}
\put(-32, 36){$a^p_{1\dots1l}$} \put(-21,19){$\vdots$} \put(-42,11){$a^p_{g_1 \dots g_{n\!-\!1} l}$}
\end{overpic}}$$ 
 \vspace{0.5cm}
$$ \mbox{\LARGE$\rightarrow$}\hspace{0.5cm}
\hspace{1.0cm}\raisebox{-19 pt}{\begin{overpic}[bb=0 0 272 85, height=45 pt]{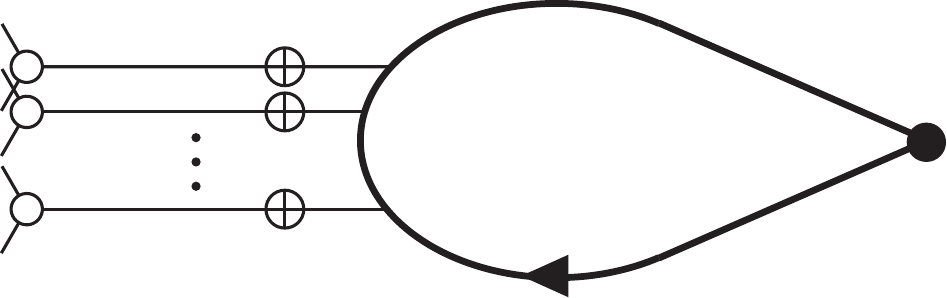}
\put(-32, 36){$a^p_{1\dots1l}$} \put(-21,19){$\vdots$} \put(-42,11){$a^p_{g_1 \dots g_{n\!-\!1} l}$}
\end{overpic}} 
\hspace{0.5cm}\mbox{\LARGE$\rightarrow$}\hspace{0.5cm}
 \hspace{1.0cm}\raisebox{-16 pt}{\begin{overpic}[bb=0 0 272 85, height=45 pt]{Bouquet01-2.pdf}
\put(-44, 34){$-a^p_{1\dots1l}$} \put(-22,17){$\vdots$} \put(-50,9){$-a^p_{g_1 \dots g_{n\!-\!1} l}$}
\end{overpic}}$$ 
 \vspace{0.8cm}
$$\hspace{-1.1cm}(\mbox{iii}) \hspace{1.1cm}  \hspace{0.5cm}  \hspace{1.0cm}\raisebox{-50 pt}{\begin{overpic}[bb=0 0 239 236, height=110 pt]{Bouquet02-2.pdf}
\put(-33,105){$a^p_{1\dots1l}$} \put(-22,88){$\vdots$} \put(-43,79){$a^p_{g_1 \dots g_{n\!-\!1} l}$}
\put(-33,30){$a^p_{1\dots1h}$} \put(-22,13){$\vdots$} \put(-43,5){$a^p_{g_1 \dots g_{n\!-\!1} h}$}
\end{overpic}} 
\hspace{0.5cm}\mbox{\LARGE$\rightarrow$}\hspace{0.5cm}
 \hspace{1.0cm}\raisebox{-57 pt}{\begin{overpic}[bb=0 0 239 236, height=110 pt]{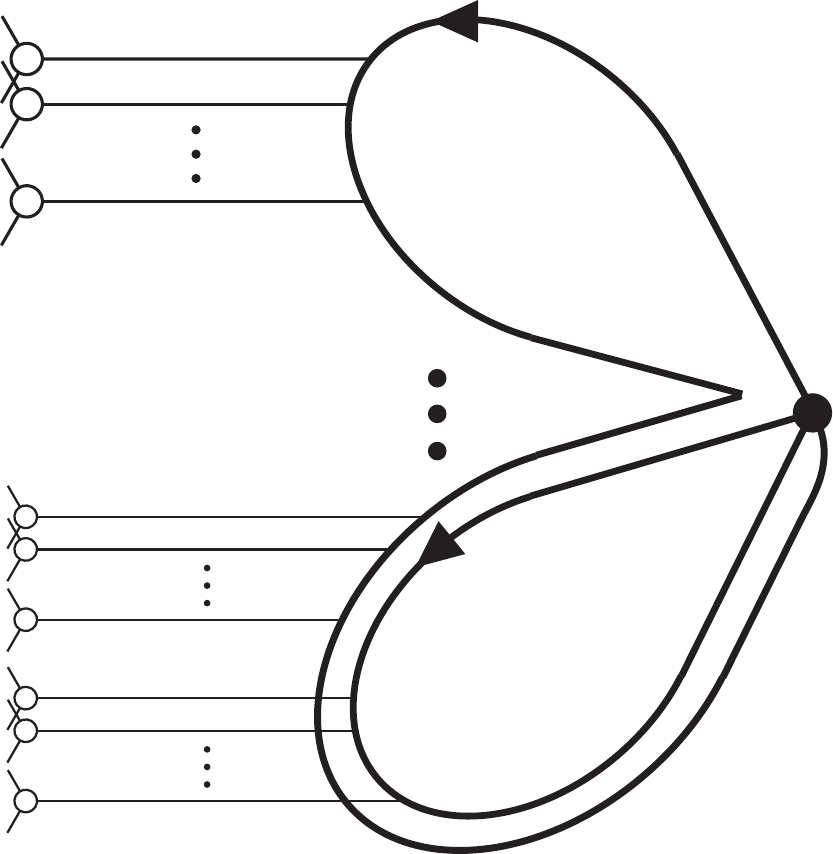}
 \put(-33,107){$a^p_{1\dots 1l}$} \put(-22,90){$\vdots$} \put(-43,81){$a^p_{g_1 \dots g_{n-1} l}$}
\put(-32,52){$\,_{a^p_{1\dots 1l}}$} \put(-22,46){$\,_{\vdots}$} \put(-43,35){$\,_{a^p_{g_1 \dots g_{n-1} l}}$}
\put(-32,23){$\,_{a^p_{1\dots 1h}}$} \put(-22,17){$\,_{\vdots}$} \put(-43,6){$\,_{a^p_{g_1 \dots g_{n-1} h}}$}
\end{overpic}}
$$
 \vspace{0.5cm}
$$\hspace{-0.1cm}\mbox{\LARGE$\rightarrow$} \hspace{0.1cm}\hspace{0.5cm}
\hspace{2.0cm}\raisebox{-52 pt}{\begin{overpic}[bb=0 0 239 236, height=110 pt]{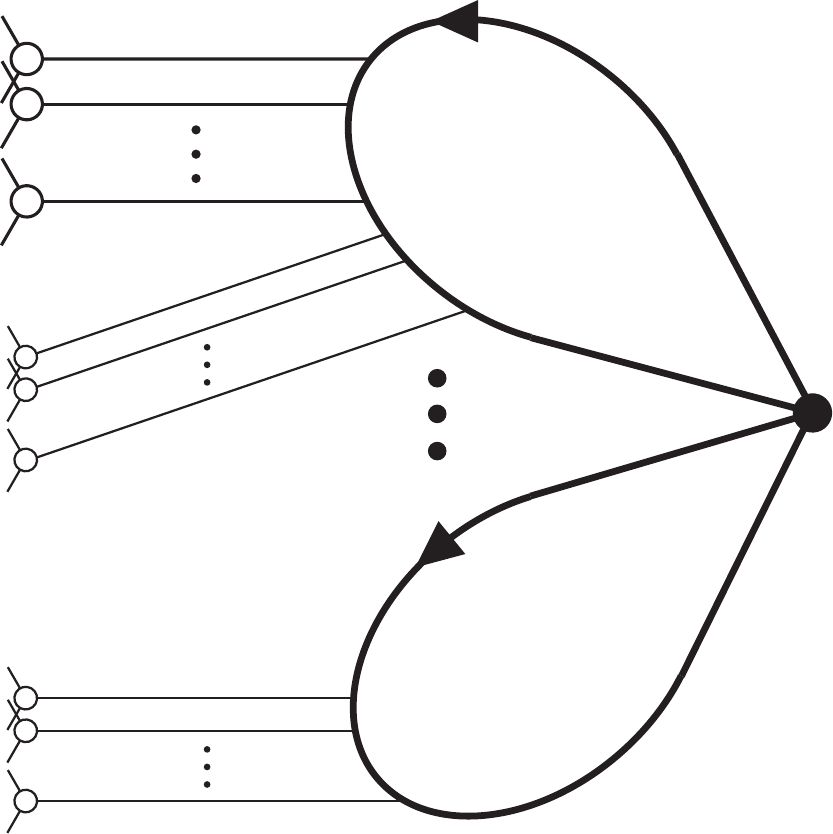}
\put(-33,105){$a^p_{1\dots1l}$} \put(-22,88){$\vdots$} \put(-43,79){$a^p_{g_1 \dots g_{n\!-\!1} l}$}
\put(-32,67){$\,_{a^p_{1\dots 1l}}$} \put(-22,61){$\,_{\vdots}$} \put(-43,50){$\,_{a^p_{g_1 \dots g_{n-1} l}}$}
\put(-32,23){$\,_{a^p_{1\dots 1h}}$} \put(-22,17){$\,_{\vdots}$} \put(-43,6){$\,_{a^p_{g_1 \dots g_{n-1} h}}$}
\end{overpic}} 
\hspace{0.5cm}\mbox{\LARGE$\rightarrow$}\hspace{0.5cm}
 \hspace{2.0cm}\raisebox{-50 pt}{\begin{overpic}[bb=0 0 239 236, height=110 pt]{Bouquet02-2.pdf}
\put(-79,104){$a^p_{1\dots1l}\!+\!a^p_{1\dots1h}$} \put(-50,89){$\vdots$} \put(-95,78){$a^p_{g_1 \dots g_{n\!-\!1} l}\!+\!a^p_{g_1 \dots g_{n\!-\!1} h}$}
\put(-33,30){$a^p_{1\dots1h}$} \put(-22,13){$\vdots$} \put(-43,5){$a^p_{g_1 \dots g_{n\!-\!1} h}$}
\end{overpic}}
$$
\caption{Realization of moves in Figure \ref{TorCla} up to HL-homotopy.} \label{ReTraCla}
\end{figure}
\end{proof}

\appendix
\section{Proof of Theorem \ref{EST}} \label{app1}

\par
We review the proof of Theorem \ref{EST} (\cite{MS}). We prepare some notions.
\begin{definition}[Meridian-disk system]
Let $K$ be a handlebody-knot (i.e. a 1-component handlebody-link). Let $\mathcal{D}$  be a set of mutually disjoint properly embedded disks in $K$ (i.e. mutually disjoint embedded disks whose boundaries are embedded to the boundaries of $K$). We put $B(\mathcal{D}) = cl(K\setminus N(\mathcal{D}; K))$, where $cl(X)$ is the closure of $X$ and $N(X; Y)$ is the regular neighborhood of $X$ in $Y$.  $\mathcal{D}$ is called a \textit{meridian-disk system} of $K$ if $B(\mathcal{D})$ is a set of 3-balls. 
 A meridian-disk system $\mathcal{D}$ is called \textit{complete} if $B(\mathcal{D})$ is one ball.
\end{definition}
For a handlebody-link $H$, by taking the dual of a meridian-disk systems of each component of $H$, 
we have a spatial graph presentation of $H$. Especially, 
by taking the dual of a complete meridian-disk system of each component of $H$, we have a bouquet graph presentation of $H$ (Figure \ref{fig1-13}). 
\begin{figure}
$$\raisebox{0 pt}{\includegraphics[bb=0 0 285 270, height=80 pt]{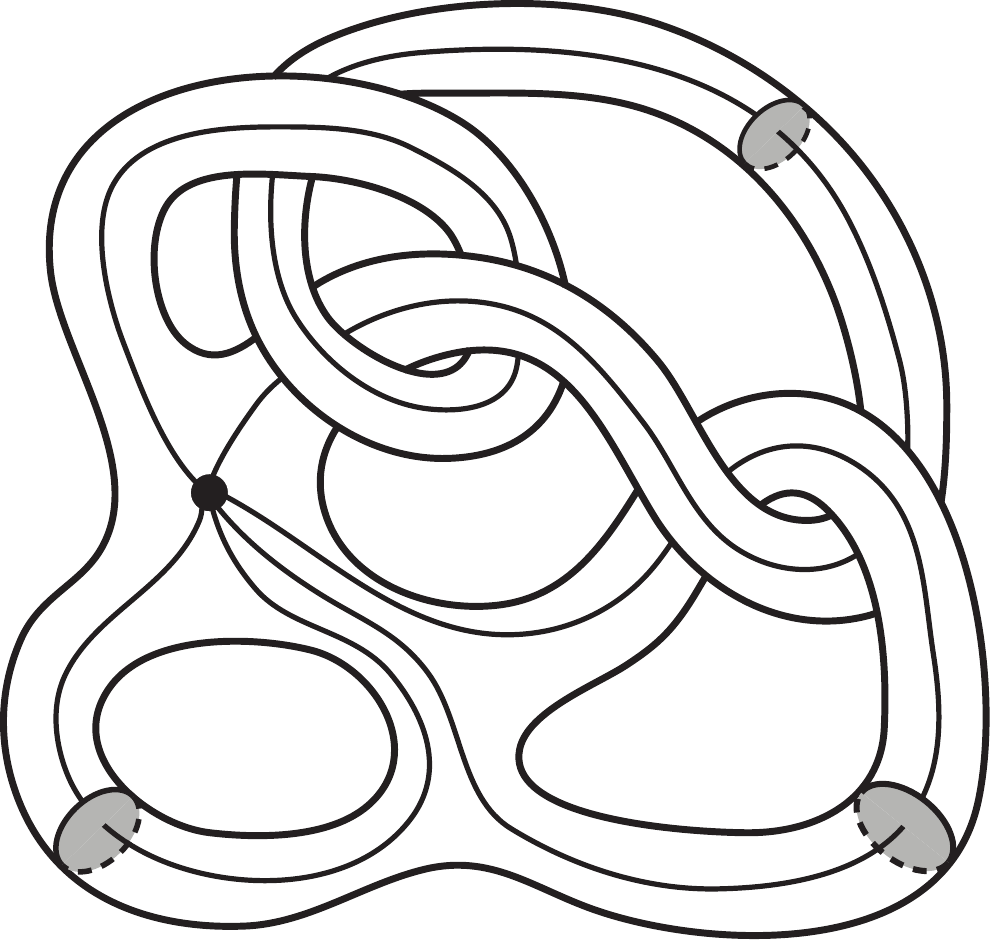}}$$
\vspace{-0.7cm}
\caption{A complete meridian system and its dual graph.} \label{fig1-13}
\end{figure}

\begin{definition}[Disk-sliding move \cite{Su3, Joh}]
Let $\displaystyle \mathcal{D} = \bigcup_{j=1}^{n} D_j$ be a complete meridian-disk system of a genus $n$ handlebody-knot $K$ and let $\alpha$ be a simple arc on $\partial K$ such that $\alpha \cap \mathcal{D} = \partial \alpha$ and $\alpha$ connects two different disks $D_i$ and $D_j$ $\in \mathcal{D}$ (Figure \ref{fig1-15} left). Then, 
$$cl(\partial N(D_i \cup \alpha \cup D_j; K) \setminus \partial K)$$
consists of three proper disks $D'_i$, $D'_j$ and $D_{\alpha}$ (Figure \ref{fig1-15} middle), where $D'_i$ and $D'_j$ are parallel to $D_i$ and $D_j$ respectively. The two meridian-disk systems $$\mathcal{D'} = (\mathcal{D} \setminus D_i) \cup D_{\alpha} \mbox{ and } \mathcal{D''} = (\mathcal{D} \setminus D_j) \cup D_{\alpha}$$ 
are again complete. A \textit{disk-sliding move} of $D_j$ (resp. $D_i$) over $\alpha$ across $D_i$ (resp. $D_j$) is replacing $D_j$ (resp. $D_i$) by $D_{\alpha}$ (Figure \ref{fig1-15} right). 
\begin{figure}[ht]
$$\raisebox{-20 pt}{\begin{overpic}[bb=0 0 266 153, height=48 pt]{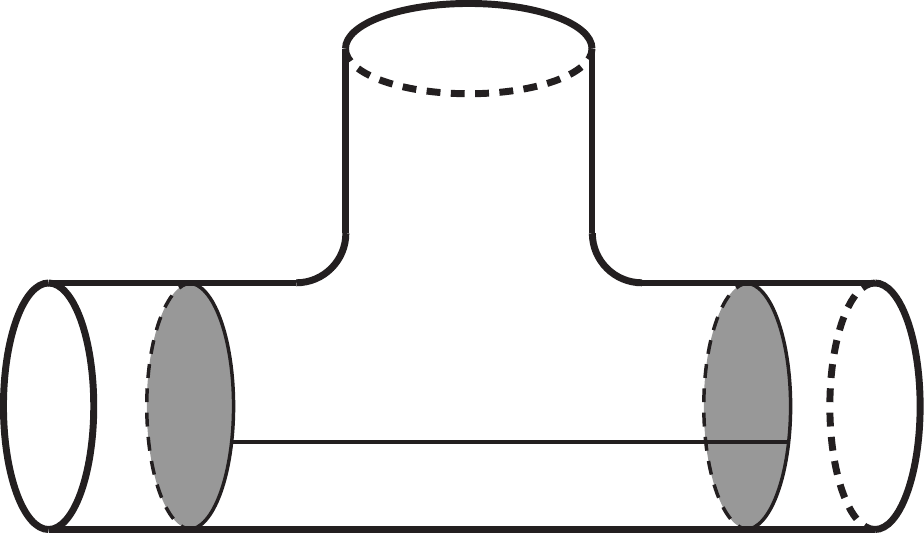} 
\put(11, 28){$D_i$}
\put(63, 28){$D_j$}
\put(41, 12){$\alpha$}
\end{overpic}}
\hspace{0.5 cm}\mbox{\LARGE{$\rightarrow$}}\hspace{0.5 cm}
\raisebox{-20 pt}{\begin{overpic}[bb=0 0 266 153, height=48 pt]{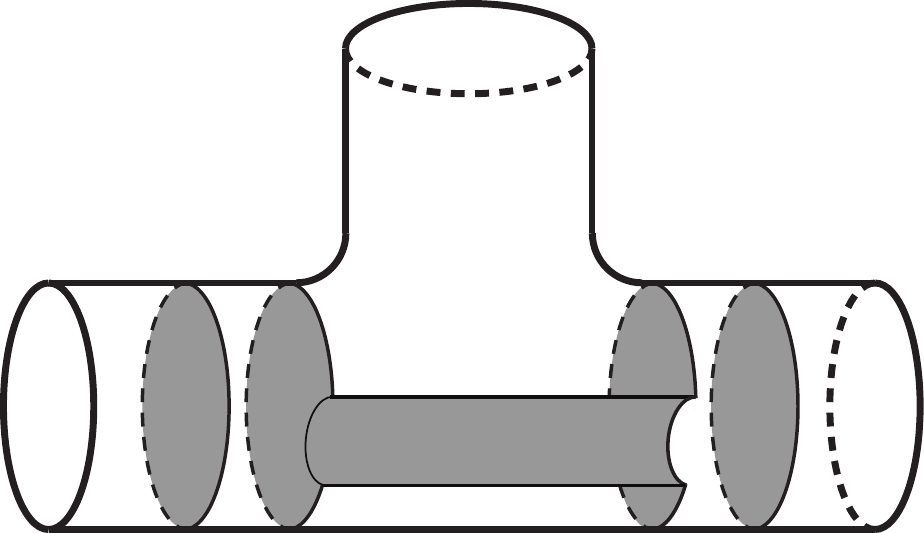}
\put(11, 28){$D'_i$}
\put(63, 28){$D'_j$}
\put(37, 16){$D_{\alpha}$}
\end{overpic}} 
\hspace{0.5 cm}\mbox{\LARGE{$\rightarrow$}}\hspace{0.5 cm}
\raisebox{-20 pt}{\begin{overpic}[bb=0 0 266 153, height=48 pt]{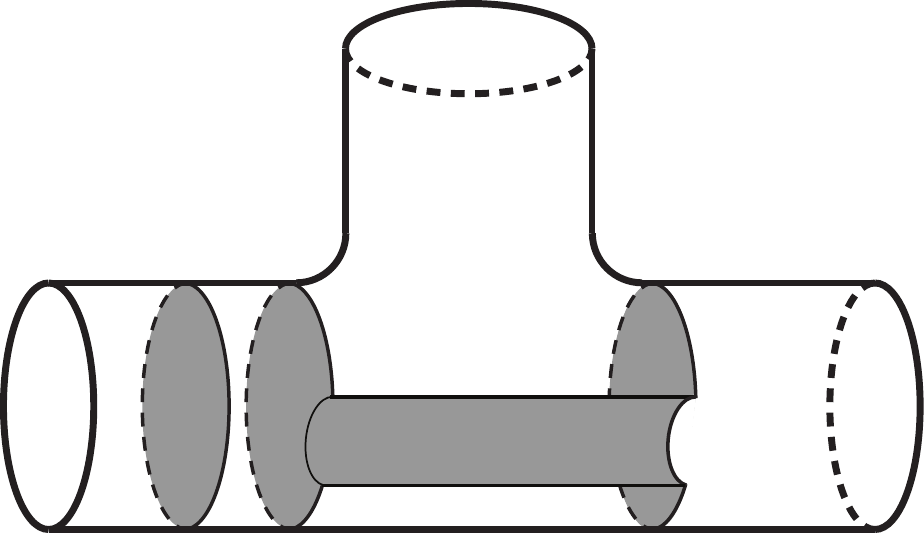}
\put(11, 27){$D_i$}
\put(37, 16){$D_{\alpha}$}
\end{overpic}}$$
\caption{A disk-sliding move.} \label{fig1-15}
\end{figure}
\end{definition}
\begin{lemma}[\cite{Su3, Joh}] \label{lem1-02}
Let $\mathcal{D}$ and $\mathcal{D'}$ be two complete meridian-disk systems of a handlebody-knot $K$, then $\mathcal{D'}$ is obtained from $\mathcal{D}$ by a finite sequence of disk-sliding moves.
\end{lemma}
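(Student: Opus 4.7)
The plan is to proceed by induction on $|\mathcal{D} \cap \mathcal{D}'|$ (the number of components of intersection), reducing this quantity to zero using a combination of ambient isotopies and disk-sliding moves, and then handling the disjoint case separately. First I would put $\mathcal{D}$ and $\mathcal{D}'$ in general position so that $\mathcal{D} \cap \mathcal{D}'$ is a disjoint union of simple arcs and simple closed curves meeting transversally.

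To eliminate the closed-curve intersections I would use the standard innermost-circle argument. Take a closed curve $c \subset \mathcal{D} \cap \mathcal{D}'$ that is innermost on some $D' \in \mathcal{D}'$, so that $c$ bounds a sub-disk $E \subset D'$ with $\operatorname{int}(E) \cap \mathcal{D} = \emptyset$, and also a sub-disk $E' \subset D$ for some $D \in \mathcal{D}$. The 2-sphere $E \cup E'$ bounds a 3-ball in the handlebody $K$ by irreducibility, and pushing $E'$ across this ball removes $c$ together with any components of $\mathcal{D} \cap E'$. Iterating removes all closed curves from $\mathcal{D} \cap \mathcal{D}'$.

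For the remaining arc intersections I would use an outermost-arc argument combined with disk slides. Take an outermost arc $a$ on some $D' \in \mathcal{D}'$, cutting off a sub-disk $E \subset D'$ with $\operatorname{int}(E) \cap \mathcal{D} = \emptyset$ and $\partial E = a \cup \beta$, where $\beta \subset \partial K$ has both endpoints on $\partial D$ for the disk $D \in \mathcal{D}$ containing $a$. In the generic situation, the other disk $D_j \in \mathcal{D}$ adjacent along the arc $\beta$ is distinct from $D$; here $\beta$ itself provides the guiding arc for a disk-sliding move, replacing $D_j$ by the new disk produced from $E$, after which the arc $a$ disappears from the intersection and $|\mathcal{D} \cap \mathcal{D}'|$ strictly decreases. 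Once $\mathcal{D} \cap \mathcal{D}' = \emptyset$, I would cut $K$ open along $\mathcal{D}$ to obtain a 3-ball $B$ in which $\mathcal{D}'$ sits as a disjoint system of properly embedded disks with boundary avoiding the $2n$ scars of $\mathcal{D}$. A further innermost-disk argument inside $B$, combined with finitely many disk-sliding moves that rearrange the endpoints of $\partial \mathcal{D}'$ among the scars of $\mathcal{D}$, will identify $\mathcal{D}'$ with $\mathcal{D}$ up to isotopy.

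The main obstacle I expect is the exceptional case in the outermost-arc step when both endpoints of $\beta$ lie on the same disk $D \in \mathcal{D}$, so no disk-sliding move is directly available. The standard remedy is to form the closed curve $\beta \cup \gamma$ using the appropriate sub-arc $\gamma \subset \partial D$ and observe that it bounds the disk $E \cup D_0$ for a sub-disk $D_0 \subset D$; by irreducibility of $K$ this sphere bounds a ball, which lets one isotope away $a$ (at the cost of possibly changing $\mathcal{D}$ by an isotopy) without increasing the number of intersection arcs. Controlling this case carefully, and verifying that the lexicographic pair (number of closed-curve intersections, number of arc intersections) strictly decreases throughout, is the delicate part of the argument.
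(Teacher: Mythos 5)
Note first that the paper does not prove this lemma at all: it is quoted from Suzuki and Johannson, so your attempt has to be measured against the standard cut-and-paste argument rather than against anything in the text. The central difficulty of that argument is the elimination of intersection arcs, and it is exactly there that your outline breaks down. Since the outermost arc $a$ lies in a single disk $D \in \mathcal{D}$ and $\partial E = a \cup \beta$ with $\beta \subset \partial K$, both endpoints of $\beta$ automatically lie on $\partial D$, and because $E$ is outermost the interior of $\beta$ is disjoint from all of $\partial \mathcal{D}$. Hence the ``generic situation'' you describe, in which $\beta$ runs to a second disk $D_j \neq D$ and can serve directly as the guiding arc of a disk-sliding move, never occurs; the case you set aside as exceptional is in fact the only case.

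The remedy you propose for that case is also not correct. The arc $a$ cuts $D$ into two subdisks $D_0, D_0'$, and $E \cup D_0$ is a properly embedded disk in $K$ with boundary $\beta \cup \gamma \subset \partial K$ --- it is not a $2$-sphere, so irreducibility of the handlebody gives nothing, and one cannot in general remove the arc $a$ by an ambient isotopy: this configuration is precisely the one in which a genuine disk slide is forced. The standard proof replaces $D$ by one of the two banded disks $E \cup D_0$ or $E \cup D_0'$ (pushed off $\mathcal{D}$), verifies that at least one of the resulting systems is again complete, and then shows that this replacement is realized by a finite sequence of disk-sliding moves in the sense of the definition (an arc joining two \emph{distinct} disks), by reading off from the cut-open ball which disks of $\mathcal{D}$ the new disk is banded over; the intersection with $\mathcal{D}'$ strictly decreases and one concludes by induction together with a separate analysis of the disjoint case. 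None of these points --- preservation of completeness under the replacement, nor its realization by admissible slides --- appears in your outline, and your concluding disjoint-position step is likewise only gestured at (there is nothing ``innermost'' left once $\mathcal{D} \cap \mathcal{D}' = \emptyset$; what is needed is an argument about how each disk of $\mathcal{D}'$ partitions the scars of the cut-open ball). The misidentified arc configuration and the invalid isotopy claim constitute a genuine gap at the heart of the proof.
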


\begin{proof}[Proof of Theorem \ref{EST}] We prove the handlebody-knot case. This case immediately induces the handlebody-link case. 
\par
For two bouquet graph presentations $\Gamma$ and $\Gamma'$ of a handlebody-knot $K$, the dual meridian-disk systems $\mathcal{D}$ and $\mathcal{D}'$ are both complete. From Lemma \ref{lem1-02}, there is a finite sequence of disk-sliding moves between the two meridian-disk systems. The dual of a disk-sliding move is an edge-sliding move of a bouquet graph (Figure \ref{fig1-17}).
\begin{figure}[ht]
$$\raisebox{-27 pt}{\includegraphics[bb=0 0 336 215, height=60 pt]{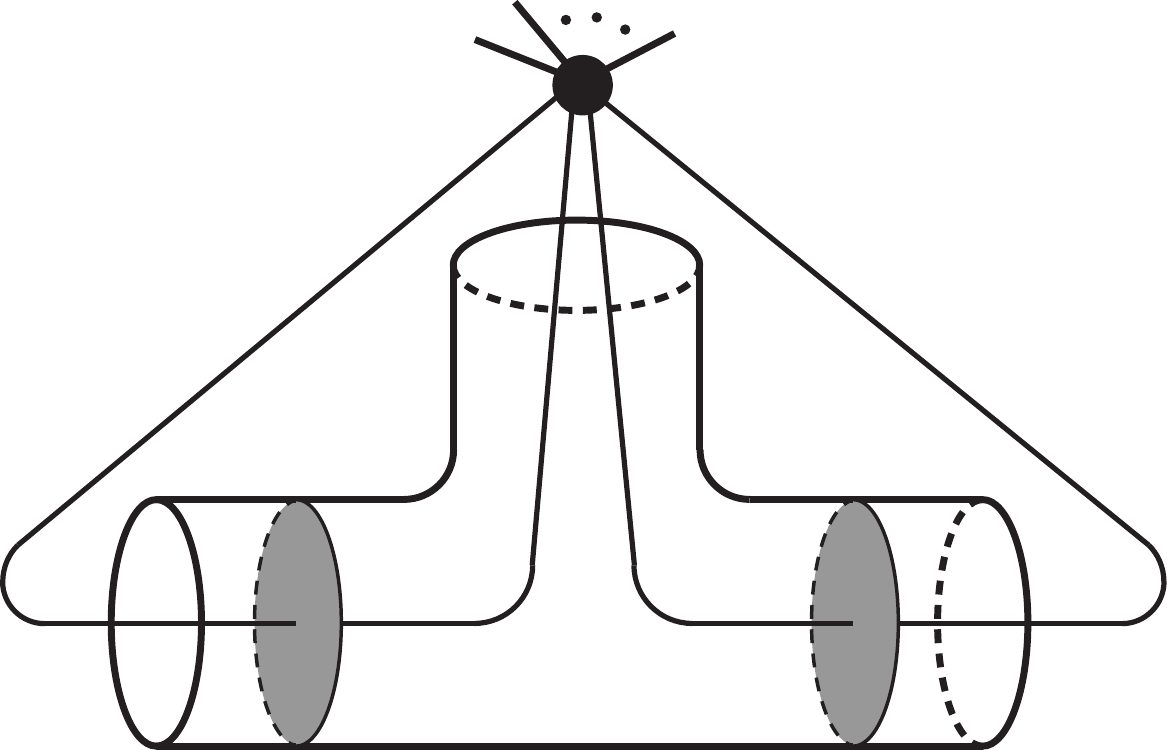}}
\hspace{0.3 cm}\mbox{\LARGE{$\rightarrow$}}\hspace{0.3 cm}
\raisebox{-27 pt}{\includegraphics[bb=0 0 332 208, height=60 pt]{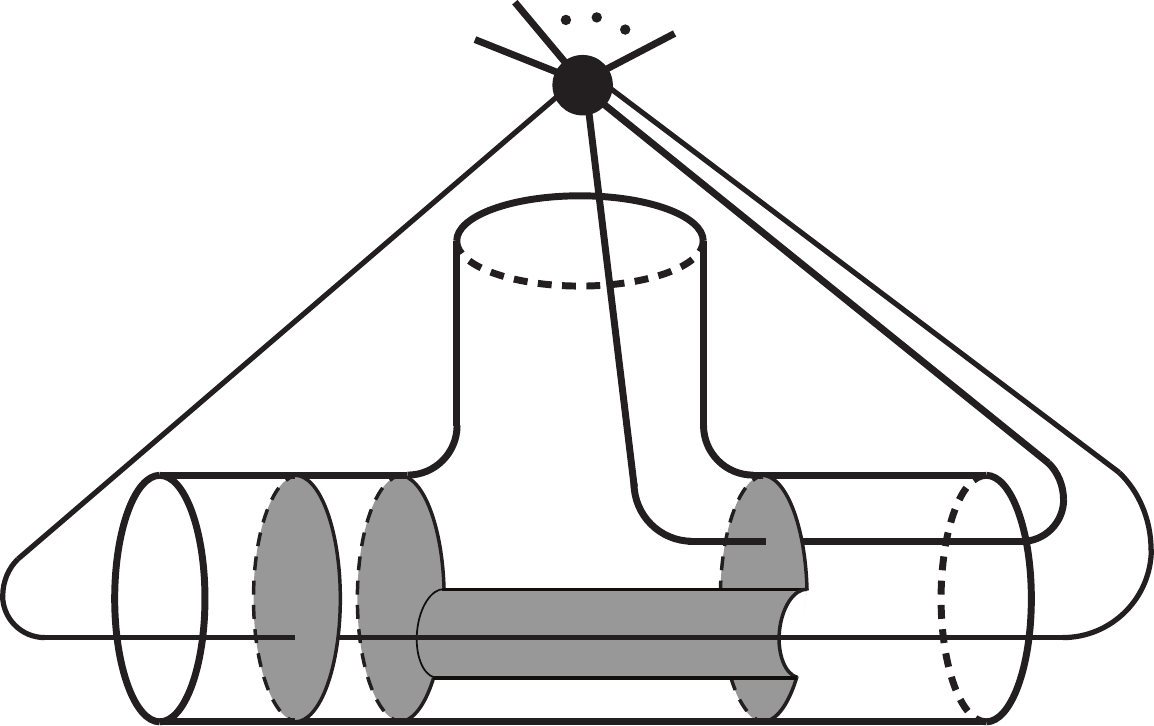}}$$
\caption{A dual of a disk-sliding move.} \label{fig1-17}
\end{figure}
Taking the dual of the sequence of disk-sliding moves, we have a sequence of edge-sliding moves from $\Gamma$ to $\Gamma'$.
\end{proof}

\end{document}